\theoremstyle{plain} % default
\newtheorem{theorem}[subsection]{Theorem}
\newtheorem{prop}[subsection]{Proposition}
\newtheorem{lemma}[subsection]{Lemma}
\newtheorem{corollary}[subsection]{Corollary}
\theoremstyle{remark}
\newtheorem*{rem*}{Remark}
\newtheorem*{rems*}{Remarks}
\newtheorem{exmp}[subsection]{Example}
\newtheorem*{note}{Note}
\theoremstyle{definition}
\newtheorem{definition}[subsection]{Definition}
\numberwithin{equation}{subsection}
\def\span{\mbox{Span}}
\newcommand{\ck}[1]{{#1}^{\bigvee}}
\newcommand{\pair}[1]{\langle #1\rangle}
\newcommand{\mpair}[1]{\pair{\,#1\,}}
\begin{document}

\title{On reflection representations of Coxeter groups\\over non-commutative rings}

\author {Annette Pilkington}
 \address{Department of Mathematics\\ University of Notre
Dame\\ Room 255 Hurley Building \\ Notre Dame, Indiana, 46556}
\email{Pilkington.4@nd.edu}

\keywords{Path Algebras, Domain, Coxeter Groups}

\begin{abstract} In this paper, we consider representations of Coxeter groups over a path algebra, R,  defined in Dyer \cite{Noncom}. We 
answer a question posed by Dyer about the multiplicative properties of R, showing that it is ``almost a domain". 
We also show that R cam be embedded in a matrix ring over a free product of extension fields of the rational numbers and rings of Laurent polynomials.
\end{abstract}

\maketitle 

\section*{Introduction}  Coxeter groups admit representations as certain discrete   reflection groups associated to root systems in real vector spaces (\cite{V}, \cite{Bour}, \cite{Hum}). In many situations, these have an associated analogue of  a Cartan matrix, describing the pairings between simple  roots in the  reflection representation, and simple coroots in the ``dual'' reflection representation. There are also reflection representations and root systems of corresponding Iwahori-Hecke algebras (\cite{CIK}).  Reflection  representations and root systems in modules over commutative rings have been considered in \cite{HeeRoot}, \cite{DyTh} and \cite{EmbI}-\cite{EmbII}. 

In \cite{Noncom}, similar notions of reflection representation  of Coxeter groups and Hecke algebras are considered over non-commutative rings. They are associated to a certain class of   matrices (non-commutative Cartan matrices, NCMs) with entries in non-commutative rings,   defined by identities related to Chebyshev polynomials  on the $2\times 2$-principal submatrices. We call these  ``lax'' reflection representations in this paper. They have the property that if $(W',S')$ is another Coxeter system and there is a group homomorphism $f\colon W\to W'$ which restricts to a bijection $S\to S'$, then lax reflection representations for $(W',S')$ pull back along $f$ to lax reflection representations of $(W,S)$.

An important role in the theory is played by a certain ring (the lax universal coefficient ring) with specified NCM, denoted in this paper as $\widetilde R=\widetilde R_{W,S}$, which gives (roughly speaking) an initial object in a   category of rings with a specified NCM for $(W,S)$. This ring, which is of considerable interest\footnote{It is stated without proof in \cite{Noncom}  that $\widetilde R$ can be identified as based ring with  the  two-sided ideal $J_{1}$ of Lusztig's equal parameter asymptotic Hecke  algebra $J$.}, may be defined as quotient of the path algebra of the Coxeter digraph by certain rank two relations. 
It contains a family $([e_{r}])_{r\in S}$ of non-zero orthogonal idempotents (the images in the quotient of length $0$ paths) 
such that  $\widetilde R=\bigoplus_{r,s\in S}[r]\widetilde R[s]$, and  is unital if and only if $S$ is finite.  The ring $\widetilde R$ is free as $\mathbb{Z}$-module and admits commuting left and right lax reflection $W$-actions, with the left (resp., right) action 
being by our conventions right (resp., left) $\widetilde R$-linear.   

This paper concerns a subclass of the 
lax reflection representations,  called here strict reflection 
representations, which is   defined by strict NCMs, satisfying stronger 
conditions than NCMs, and  for which the above pullback property no longer 
holds.  There is again a universal ring with strict NCM,  with  
analogous properties to those of $\widetilde R$ mentioned above. We denote this ring as $R=R_{W,S}$. 

We state a result which partly motivates the definition of $R$ and  clarifies the  relationship between lax and strict reflection representations, and  reflection representations   in real vector   spaces. For standard dual reflection representations of $(W,S)$ on real vector spaces $V$ and $\ck V$ with  bases of simple roots $\Pi$ and simple coroots $\ck \Pi$, there is an identification of  $\ck V\otimes_{\mathbb{R}}V$ with the ring $B$ of $S\times S$ real matrices with only finitely many non-zero entries (so that the matrix units are $\ck\alpha\otimes \beta$ for $\ck\alpha\in \ck \Pi$ and $\beta\in \Pi$) and a  ring homomorphism $R\to B$  which is equivariant for the left and right $W$-actions and  which induces a bi-equivariant ring surjection $R'=R\otimes_{\mathbb{Z}}\mathbb{R}\to B$ Here, we regard $W$ as acting on the left (resp., right) of $\ck V$ (resp., $V$).  This would be true with $R$ replaced by $\widetilde  R$, but that is of less interest since $R$ is a quotient (as ring with two-sided $W$-actions) of $\widetilde R$. For an irreducible finite Weyl group, $R'\to B$ is an isomorphism.  In this paper, we do not discus more delicate results of a similar nature to the last one for other   classes of Coxeter systems, or  extensions of the above facts to Hecke algebras.

 Although the class of strict reflection representations was not considered explicitly  in \cite{Noncom}, both  rings $R$  and  $\widetilde R$ are constructed there.  
   The  main result of this paper answers affirmatively  a basic question, raised in \cite[3.24]{Noncom},  of whether the strict universal coefficient ring $R$ is nearly a domain, in the sense that if $r,s,t\in S$ and $a\in [r]R[s]$ and $b\in [s]R[t]$, then $ab=0$ implies $a=0$ or $b=0$. The analogous statement   for $\widetilde R$ fails  whenever $\widetilde R\neq R$. One may hope that its validity for $R$ will make the $W$-actions on $R$  better behaved in some respects than those on $R$.
  We prove the main result by reducing to the case of finite $S$ and then  embedding $R$ as a subring of a ring  $\mathrm{Mat}_{S'\times S'}(A)$ of $S'\times S'$-matrices, where $S':=S\cup\{\bullet\}$ and  $A$ is a  $\mathbb{Q}$-algebra, depending on the Coxeter matrix,   which is a  free product (in the sense of Cohn \cite{Cohn1}), 
of certain real cyclotomic  number fields, Laurent polynomial rings and polynomial rings over $\mathbb{Q}$.

Since the techniques used come from diverse fields, some of which may be unfamiliar to the reader, we include basic definitions and cite basic results from 
 the various fields. In section \ref{intro}, we present  basic definitions and notation relating to rings and modules used throughout the paper. In section \ref{Graphs},
 we introduce Serre's notation  and terminology for graphs \cite{Serre} and in section \ref{palgs}, we give the basic definitions of path algebras and cite 
 a number of basic  results on Gr\"{o}bner bases of ideals in that context  from Green \cite{GreenA}. In section \ref{sec5}, we
 introduce  the path algebras  related to Coxeter systems and their quotients,  $\widetilde R_{W,S}$ and $R_{W,S}$,  appearing   in  Dyer \cite{Noncom}. In that section, we also  prove some basic results about these algebras, which allow us to restrict our attention to finite rank Coxeter groups and to 
 the ring $\hat{R}_{W,S} = R_{W,S}\otimes_\Bbb{Z}\Bbb{Q}$ when proving the main result. 
 In section \ref{secrealreps}, we provide for motivation some unpublished results of Dyer extending material from \cite{Noncom}
  on the lax and strict reflection representations associated to these rings and how they relate to reflection  representations on real vector spaces. We thank Matthew Dyer for his permission to include these results.  In section \ref{fpr}, we give the definition and results on free products of rings from 
 Cohn \cite{Cohn1}, \cite{Cohnfir}, and \cite{Cohn2} and we show that certain rings related to the edges of the underlying  path algebra are free products in the sense of Cohn. In section \ref{imbed}, we show how the ring $\hat{R}_{W,S}$ can be imbedded in a matrix rig over a free product of rings, We conclude with a 
 proof that the ring ${R}_{W,S}$ is nearly a domain,  as described above, in section \ref{conclude}.

\section{general Notation and Definitions}\label{intro}
\subsection{} The notation  $X = \{a, b, c \}$ will be used to denote that $X$ is a set with elements $a, b$ and $c$.
The set  $X\backslash Y$ will be the set of all elements in $X$ but not in $Y$ and the set $X \bigcupdot Z$ will denote the disjoint union of the sets $X$ and $Z$.

Unless otherwise stated, all rings considered have an identity (are unital).
If $R$ is a ring, we will denote the identity of $R$ by $1_R$ in  situations where the ring in question  may not be clear from the context. If $\phi :R_1 \to R_2$ is a homomorphism of rings and 
 $S \subset R_1$ is a subring of $R_1$, we let $\phi|_S$ denote the restriction of $\phi$ to $S$. If 
 $\psi :R_2 \to R_3$ is a homomorphism from $R_2$ to the ring $R_3$, we let $\psi \circ \phi$ denote the composition map.
We will use the notation $R_1 \cong R_2$ to indicate that the rings $R_1$ and $R_2$ are isomorphic.

If $X$ is a subset of a ring $R$, the two sided ideal of $R$ generated by the set $X$ will be denoted by $\langle X\rangle$.
If $I$ is an ideal of a ring $R$, we
write $r_1 \equiv r_2 \mod I$ when $r_1 - r_2 \in I$. 
We denote the ring of $n \times n$ matrices over a ring $R$ by $M_n(R)$. 
We will also use the notation $\langle x \in X | r \in R \rangle$ to denote a  group with generators in the set $X$ and relations in the set $R$.
In cases where the meaning of the notation is not clear from the context, we will add clarification.

The symbol $\mathbb{N}$ will be used to denote the 
non-negative integers,  $\{0, 1, 2, 3, \dots \}$, and the symbol $\mathbb{N}_{\geq k}$ will be used to denote the subset of $\mathbb{N}$ consisting of numbers greater than or equal to $k$. We let $[k]$ denote the set $\{1, \dots , k\}$.  The integers will be denoted by  $\mathbb{Z}$,  and  the rational numbers by $\mathbb{Q}$. 
Polynomial rings  $\mathbb{Q}[\{x_i\}_{i \in I}]$ will have non-commuting variables $\{x_i\}_{i \in I}$ unless stated otherwise.

\begin{definition} If $R$ is a ring and $a  \in R, a \not=0 $, we say that $a$ is a zero divisor in $R$ if there exists $b \in R, b \not= 0 $ such that $ab = 0$ or $ba = 0$. We say that $R$ is a domain if $R$ has no zero divisors. 
\end{definition}

Let  $A$ be a commutative unital ring. An (associative, possibly non-unital)  $A$-algebra is a (possibly non-unital, associative) ring  $B$ with an additional structure of unital $A$-module for which the multiplication map $B\times B\to B$ is $A$-bilinear (i.e. $(\alpha_1b_1 + \alpha_2b_2)(\alpha_3b_3 + \alpha_4b_4) = \displaystyle \sum_{i = 1}^2\sum_{j = 1}^2\alpha_i\alpha_jb_ib_j,$ for all $ \alpha_i \in A, b_j \in B$). Morphisms of $A$-algebras are $A$-linear ring homomorphisms. We call $A$ the coefficient ring of the $A$-algebra $B$. We regard  rings as $\mathbb{Z}$-algebras.

  A module $M$ for the $A$-algebra $B$ is a module for the ring $B$ together with a unital $A$-module structure on $M$ such that the multiplication map $B\times M\to M$ is $A$-bilinear.
   Morphisms of modules for the algebra $B$ are $B$-linear maps which are also $A$-linear.
    We write $\mathrm{End}_{B}(M)$ for the unital $A$-algebra of endomorphisms of $M$ and  $\mathrm{Aut}_{B}(M)$ for its unit group.  For a bimodule $M$ for a pair $(B,B')$ of $A$-algebras, it is required that the $A$-module structures from $B$ and $B'$ coincide.

\section{Graphs}\label{Graphs}
We use  the notation and terminology of Serre \cite{Serre} for graphs and trees.
\begin{definition}
A graph $\Gamma$ consists of a set of vertices $S = \mbox{Vert} \ \Gamma $, a set of edges  $Y = \mbox{Edge} \ \Gamma$ and two maps 
$$Y \to S \times S, \ \  y \to (o(y), t(y))$$
and 
$$Y \to Y, \ \ y \to \bar{y},$$
which satisfy the following condition: for each $y \in Y$, we have $\bar{\bar{y}} = y, \bar{y} \not= y$ and 
$o(y) = t(\bar{y})$. 
\end{definition}
Each $s \in S$ is called a vertex of $\Gamma$ and each $y \in Y$ is called an {\it edge} of $\Gamma$. 
If $y \in Y$, $o(y)$ is called the {\it origin} of $y$ and $t(y)$ is called the {\it terminus} of $y$, together $o(y)$ and $t(y)$ are called the {\it extremities }
of $y$. We say that two vertices are {\it adjacent}  if they are the extremities of some edge. 
\begin{definition} An orientation of a graph $\Gamma$ is a subset $Y_+$ of $Y = \mbox{Edge} \ \Gamma$  such that $Y$ is the disjoint union 
$Y = Y_+ \bigcupdot \overline{Y_+}$, where $\overline{Y_+} = \left\{\bar{y} | y \in Y_+ \right\}$.
\end{definition}

\subsection{Paths}
A {\it path of length $n \geq 1$} in a graph $\Gamma$ is a sequence of edges $p = (y_1, \dots , y_n)$ such that $t(y_i) = o(y_{i + 1})$, $1 \leq i \leq n - 1$.
We will  denote such a path by  $p = y_1 \dots y_n$. {\it Paths of length zero}  are just single vertices and will be denoted by $p = [s]$ for $s \in S$.
 The length of a path $p$ will be denoted by $\ell(p)$. 
 Let 
$\mathfrak{P}$ denote the set of all paths in $\Gamma$, 
we can extend the maps $o$ and $t$ to $\mathfrak {P}$ by letting $o( y_1 \dots  y_n ) = o(y_1)$ and $t( y_1 \dots  y_n ) = t(y_n)$ and $o([s]) = s = t([s])$. 
 If $p = y_1 \dots y_n$ with   $s_i = t(y_i)$ and $s_{i - 1} = o(y_i)$, we say that $p$ is a path from $s_0$ to $s_n$ and that  $o(y_1) = s_0$ and  $t(y_n) = s_n$ are the {\it extremities} of the path. 

A graph  is {\it connected} if any two vertices are the extremities of at least one path. If $p = y_1 \dots y_n$ and $y_{i + 1} = \bar{y}_i$
for some $1 \leq i \leq n - 1$, then the pair $(y_i, y_{i + 1})$ is called a {\it backtracking}.  A path $p$ is a {\it circuit} if it is a path 
 without backtracking such that $o(p) = t(p)$ and $\ell(p) \geq 1$. A circuit of length one  is called a loop. A graph $\Gamma$ is called {\it combinatorial} if it has no circuit of length less than or equal to  two.  If a graph $\Gamma$ is combinatorial, a path $p = y_1\dots y_n$ is determined by the extremities of its edges and can be characterized and denoted by its vertices as 
 $$p = y_1 \dots y_n = [s_0s_1 \dots s_n], \ \mbox{where} \ s_i = t(y_i), s_{i - 1} = o(y_i).$$
 Since our graphs of interest in this paper will be combinatorial, we will make frequent use of both notations for paths. In what follows, an edge may  be denoted by 
its label $y$ or it may be represented by its extremities as  $[o(y), t(y)]$.

 A {\it geometric edge} of a combinatorial graph, $\Gamma$, is a set $\{s_1, s_2\}$ of extremities of an edge in $\Gamma$. Each such geometric edge corresponds to a pair of edges $\{y, \bar{y}\}$ in the edge set of $\Gamma$.  A combinatorial graph is determined by 
 its vertices and geometric edges.

\begin{definition} let $\Gamma_1 = (S_1, Y_1)$ and $\Gamma_2 = (S_2, Y_2)$ be two graphs. A graph homomorphism  
 $\phi: \Gamma_1 \to \Gamma_2$ is a pair of maps $\phi_S: S_1 \to S_2$
 and $\phi_Y: Y_1 \to Y_2$ such that $o(\phi_Y(y)) = \phi_S(o(y))$ and 
 $t(\phi_Y(y)) = \phi_S(t(y))$ for every $y \in Y_1$. 
 \end{definition}
 
 \begin{exmp} Below we see a  representation of the combinatorial graph $\Gamma$, with  vertices, $S = \{r, s, t, u, v\}$,  and edges, $Y = Y_+ \bigcupdot \overline{Y_+}$,  where 
 $$Y_+ = \{y_\alpha = [rs], y_\epsilon = [st], y_\beta = [ut], y_\zeta = [tv], y_\delta = [uv], y_\gamma = [ru]\}$$
 We show only the directed edges in $Y_+$ on the graph.
 \begin{equation*}
\xymatrix{s\ar@{->}[r]^{y_{\epsilon}}&t\ar@{->}[r]^{y_\zeta}&{v}&&&\\
{r}\ar@{->}[u]^{y_\alpha}\ar@{->}[r]_{y_\gamma}&{u}\ar@{->}[u]^{y_\beta}\ar@{->}[ur]_{y_\delta}&&\\
}
\end{equation*}

 \end{exmp}

 The above example will serve as our running example throughout the paper.

\section{Path Algebras}\label{palgs}
In this section, we present the definition of a path algebra along with a summary of some related results on 
Gr\"{o}bner bases from Green \cite{GreenA}, which will prove useful in  later sections. 
 \subsection{} Let $\Gamma = \left(S, Y\right)$ be a combinatorial graph, with vertices $S = \{s_i \}_{i \in I}$ and edges $Y = \{y_j \}_{j \in J}$. 
Let  $\mathfrak {P}$ denote the set of  paths  in $\Gamma$ of finite length. 
Given a commutative ring $A$ with identity, the path algebra of $\Gamma$ over $A$, denoted $A\Gamma$, is an associative $A$-algebra, with an $A$-basis  of paths in $\mathfrak {P}$. Multiplication is given by concatenation;
$$[s_0s_1 \dots s_n][s'_0s'_1 \dots s'_m] = \begin{cases} 0 & \mbox{if} \ s_n \not= s'_0\\ 
[s_0s_1 \dots s_ns'_1 \dots s'_m] &  \mbox{if} \ s_n = s'_0 \end{cases}$$
Paths of length $0$ form a set of  orthogonal idempotents with this multiplication, and if $S$ is finite, $A\Gamma$ has an identity given by 
$$1 = \sum_{s \in S}[s].$$

\subsection{Admissible Orders on $\mathfrak {P}$ } For the remainder of  this section, we will restrict our discussion to the case where the base ring of our path algebra is a field, $K$,  and the case where the path algebra has finitely many vertices and edges, 
as in Green  \cite{GreenA} and Green et. al. in \cite{GHS}. We will combine results  from Green  \cite{GreenA} presented below with direct limits in subsequent  sections to ensure  that we can restrict our attention to the finite case when proving our main theorem. 

 A {\it well order} on the basis $\mathfrak {P}$ of  $K\Gamma$ is a total order on $\mathfrak {P}$, with the property that every nonempty subset of $\mathfrak {P}$ has a minimal element, or equivalently, for every descending chain of elements of $\mathfrak {P}$, $p_1 \geq p_2 \geq p_3 \geq \dots $, there exists some $N >0$ for which $p_N = p_{N + 1} = p_{N + 2} = \dots \ $.

Given a well order $>$ on $\mathfrak {P}$, and $w \not= 0  \in  K\Gamma$, with  $\displaystyle w = \sum_{i = 1}^na_ip_i, a_i \not= 0, a_i  \in K, p_i \in \mathfrak {P}$, we say that $p_i$ is  the {\it tip} of $w$, denoted $T(w)$, if $p_i \geq p_j$ for $1 \leq j \leq n$. If $X$ is a subset of $K\Gamma$, the tip of $X$ is 
$$T(X) = \{p \in \mathfrak {P} | p = T(x) \ \mbox{for some} \ x\not= 0, x \in X \}.$$
We denote  the set of {\em non tips} of $X$ by 
$$NT(X) = \mathfrak {P}\backslash T(X).$$
Note both T(X) and NT(X) depend on the order chosen. 

If   $W$ is a subspace of $K\Gamma$. By  Green \cite{GreenA}, Theorem 2.1, we have 
\begin{equation}\label{nform}
K\Gamma = W \oplus \mbox{Span}(NT(W)).
\end{equation}

We say that an order on $\mathfrak {P}$ is {\em admissible} if it is a well order and satisfies the following conditions for $r, s, t, u \in \mathfrak {P}$:
\begin{enumerate}
\item if $r < s$ then $rt < st$ if both $rt \not= 0$ and $st \not= 0$.
\item if $r < s$ then $ur < us$ if both $ur \not= 0$ and $us \not= 0$.
\item if $r = st \not= 0$ then $r \geq s$ and $r \geq t$.
\end{enumerate}

A number of admissible well orders can be imposed on the paths in a  path algebra, see Green \cite{GreenA}. We give  one example of such an admissible ordering  below.

\begin{exmp} The (left) length lexicographic order on $\mathfrak {P}$:\\
 Let $\Gamma = \left(S, Y\right)$ be a  graph, with vertices $S = \{s_i \}_{i \in [n]}$ and edges $Y = \{y_j \}_{j \in [m]}$. 
Let  $\mathfrak {P}$ denote the set of  paths  in $\Gamma$ of finite length. 
We choose an ordering on paths of length $\leq 1$, in such a way that each element of $S$ is less that each element of $Y$. 
$$[s_1] < [s_2] < \dots < [s_n] < y_1 < y_2 < \dots < y_m.$$
If $p$ and $q$ are paths of length at least one, $p < q$ if $l(p) < l(q)$, or if $l(p) = l(q)$ and $p = y_1y_2 \dots y_iy_{i+1} \dots y_n$, $q = y_1y_2 \dots y_iy'_{i+1} \dots y'_n$ with $y_{i + 1} < y'_{i + 1}$.
It is not difficult to check that this is in in fact a well order and conditions (1), (2) and (3) in the definition of an admissible well order hold. 
\end{exmp}

\subsection{Gr\"{o}bner Basis of an Ideal}
If $>$ is an admissible order on $\mathfrak {P}$,  and $I$ an ideal in  $K\Gamma$ ($K$ a field), we say that a subset $\mathfrak {G} \subset I$  is {\em a Gr\"{o}bner basis for the ideal $I$, with respect to $>$}, if the two-sided ideal generated by $T(\mathfrak {G})$ is equal to the two-sided ideal generated by $T(I)$. Equivalently, $\mathfrak {G} \subset I$ is a Gr\"{o}bner basis for $I$ if for every
$b \in T(I)$ there is some $g \in \mathfrak {G}$ such that  $b = pT(g)q$ for some $p, q \in \mathfrak {P}$.

Given a Gr\"{o}bner basis, $\mathfrak {G}$,  for an ideal $I$ of  $K\Gamma$ with respect to an admissible order $>$, we have, by Equation \ref{nform}, that as a vector space over $K$,

\begin{equation} \label{gbasis}
K\Gamma = I \oplus \span(NT(I)) = \langle \mathfrak {G} \rangle \oplus \span(NT(I)).
\end{equation}

  Equation \ref{gbasis} allows us to use a   Gr\"{o}bner basis, $\mathfrak {G}$,  to determine ideal membership for $I$.
  Given a Gr\"{o}bner basis, $\mathfrak {G}$,  for an ideal $I$ of  $K\Gamma$, each $y \in K\Gamma$ has a unique {\em normal form} $N(y) \in \span(NT(I))$ with respect to the order $>$, where $y = y_1 + N(y)$ and $y_1 \in I$.  
Since $\left\langle T(\mathfrak {G})\right\rangle = \left\langle T(I)\right\rangle$, we have that $\span(NT(I))$ has a basis consisting of those paths in $\mathfrak {P}$ which are not divisible by 
$T(g)$ for some $g \in \mathfrak {G}$ (i.e. the paths in $K\Gamma$ which are not of the form $pT(g)q$ for some  $g \in \mathfrak {G}$, $p, q \in \mathfrak {P}$). 
The following is  special case of  Proposition 2.9, Green \cite{GreenA}, which follows easily from Equation \ref{gbasis} and makes explicit the relationship between $K\Gamma/I$ and $\span(NT(I))$:
\begin{prop}\label{prop}(See  Proposition 2.9, Green \cite{GreenA}) Let $\Gamma$ be a graph with finitely many vertices and edges. Let $K\Gamma$ be the corresponding  path algebra with basis of paths, $\mathfrak {P}$, admitting  an admissible order $>$. Let $I$ be an ideal of $K\Gamma$ with Gr\"{o}bner basis $\mathfrak {G}$ and let $x, y \in K\Gamma$ with 
equivalence classes  $x + I$ and  $y + I$  respectively in the quotient algebra $K\Gamma/I$. Then
\begin{enumerate} 
\item $x + I = y + I$  if and only if $N(x) = N(y)$.
\item $x + I = N(x) + I$. 
\item The map $\sigma: K\Gamma/I \to K\Gamma$ with $\sigma(x + I) = N(x)$ is a vector space splitting to the canonical surjection 
$\pi: K\Gamma \to K\Gamma/I$. 
\item $\sigma$ induces  a $K$-linear isomorphism between $K\Gamma/I$ and $\span(NT(I))$.
\item  Identifying $K\Gamma/I$ with $\span(NT(I))$, then $NT(I)$ is a K-basis of $K\Gamma/I$ contained in $\mathfrak {P}$. 
\end{enumerate}
\end{prop}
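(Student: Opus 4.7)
The plan is to reduce all five assertions to the single input Equation \ref{gbasis}, which provides the direct sum decomposition $K\Gamma = I \oplus \span(NT(I))$. Relative to this decomposition, each $x \in K\Gamma$ has a unique expression $x = x_1 + N(x)$ with $x_1 \in I$ and $N(x) \in \span(NT(I))$, so the normal form $N(x)$ is intrinsically the projection of $x$ onto the second summand along the first. The resulting map $N\colon K\Gamma \to \span(NT(I))$ is $K$-linear, restricts to the identity on $\span(NT(I))$, and has kernel exactly $I$; these three properties will drive the entire argument.

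First I would prove (2), which is immediate: since $x - N(x) = x_1 \in I$, we have $x + I = N(x) + I$. For (1), the reverse implication follows from (2). For the forward implication, if $x - y \in I$ then $N(x) - N(y) = (x - y) - (x_1 - y_1) \in I$, while also $N(x) - N(y) \in \span(NT(I))$; the direct sum then forces $N(x) = N(y)$.

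For (3), well-definedness of $\sigma(x+I) := N(x)$ is exactly (1), $K$-linearity is inherited from that of the projection $N$, and the splitting identity $\pi \circ \sigma = \mathrm{id}_{K\Gamma/I}$ is just (2) read in the quotient. For (4), $\sigma$ is injective by (1), and its image is all of $\span(NT(I))$ because $\sigma(v + I) = N(v) = v$ for every $v \in \span(NT(I))$. Finally, under this identification, (5) amounts to observing that $NT(I) \subseteq \mathfrak{P}$ by definition, and being a subset of the $K$-basis $\mathfrak{P}$ of $K\Gamma$, it is automatically $K$-linearly independent, so it is a basis of its span.

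The only substantive input is Equation \ref{gbasis} itself, which has already been cited from Green's Theorem 2.1 combined with the defining condition $\langle T(\mathfrak{G}) \rangle = \langle T(I) \rangle$ of a Gröbner basis; past that input there is no real obstacle, as the proposition is essentially the bookkeeping that accompanies any projection along a direct sum decomposition.
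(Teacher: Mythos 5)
Your proof is correct and follows exactly the route the paper indicates: the paper offers no detailed proof of its own, stating only that the proposition ``follows easily from Equation \ref{gbasis}'' (the direct sum decomposition $K\Gamma = I \oplus \span(NT(I))$) and citing Green. Your elaboration—treating $N$ as the linear projection onto $\span(NT(I))$ along $I$ and reading off all five parts from that—is precisely the bookkeeping the paper leaves implicit.
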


\subsection{Overlap Relations }
Let $>$ be an admissible order on $\mathfrak {P}$. If $f, g \in K\Gamma$, $f, g \not= 0$,  and $p, q \in \mathfrak {P}$ such that 
\begin{enumerate}
\item $T(f)q = pT(g)$.
\item $T(f)$ does not divide $p$ and $T(g)$ does not divide $q$.
\end{enumerate}
The {\em overlap relation} of $f$ and $g$ by $p, q$ is 
$$o(f, g, p, q) = (1/C(T(f)))fq - (1/C(T(g)))pg,$$
where $C(T(h))$ denotes  the coefficient of $T(h)$ in $h   \in K\Gamma.$ 
Note that $T(o(f, g, p, q)) < T(f)q = pT(g)$.

We say the elements of a set $\mathfrak {G}$ is {\em tip reduced} if for distinct elements $g_1, g_2 \in \mathfrak {G}$, $T(g_1)$ does not divide $T(g_2)$. An element of the  path algebra $K\Gamma$, $\sum_{i = 1}^{n}a_ip_i, a_i \in A, a_i \not= 0, p_i \in \mathfrak {P}$, is said to be {\em left uniform} if for each $p \in \mathfrak {P}$,  either $pp_i = 0$ for all $i$ or $pp_i \not= 0$ for all $i$, $1 \leq i \leq n$. Right uniform elements are defined similarly and  an element is {\em uniform} if it is both left and right uniform.
An element of the  path algebra $K\Gamma$,   $\sum_{i = 1}^{n}a_ip_i, a_i \in K, a_i \not= 0,  p_i \in \mathfrak {P}$ is uniform if and only if there are vertices 
$v, w$ such that for $1 \leq i \leq n$, $o(p_i) = v$ and $t(p_i) = w$. The following result,  limited for simplicity  to  a special case of   Theorem 2.3,  Green 
 \cite{GreenA} which is sufficient for our needs, 
can be used to identify when a set of polynomials form a  Gr\"{o}bner basis for an ideal in a path algebra:

\begin{theorem}(See Green \cite{GreenA}, Theorem 2.3) \label{overlap} Let $\Gamma$ be a graph with finitely many vertices and edges. 
 Suppose that $\mathfrak {G}$ is a set of uniform, tip reduced elements of $K\Gamma$. If for every overlap relation,
 $o(g_1, g_2, p, q)$, with $g_1, g_2 \in \mathfrak {G}$, $p, q \in \mathfrak {P}$, we have;
$$o(g_1, g_2, p, q) = 0,$$
then $\mathfrak {G}$ is a Gr\"{o}bner basis for $\langle\mathfrak {G}\rangle$.
\end{theorem}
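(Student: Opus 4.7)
The plan is to adapt Buchberger's criterion from the commutative Gröbner basis setting to the path algebra by a minimal representation argument. Recall that $\mathfrak{G}$ is a Gröbner basis for $I := \langle \mathfrak{G}\rangle$ iff for every nonzero $f\in I$, the tip $T(f)$ is of the form $p\, T(g)\, q$ for some $g\in\mathfrak{G}$ and $p,q\in\mathfrak{P}$. So the goal reduces to producing such a divisor for the tip of an arbitrary nonzero element of $I$.

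Fix $f\in I$, $f\neq 0$, and write $f=\sum_{i=1}^n c_i p_i g_i q_i$ with $c_i\in K^{\times}$, $g_i\in\mathfrak{G}$, $p_i,q_i\in\mathfrak{P}$. Since each $g_i$ is uniform, any $p_i g_i q_i$ that has a nonzero leading term has \emph{every} term nonzero, so each $p_i T(g_i) q_i$ is a genuine path. Among all such representations of $f$, use the well-ordering of $>$ to choose one that minimizes $m := \max_i p_i T(g_i) q_i$; let $S$ be the set of indices attaining $m$. If $|S|=1$, no cancellation occurs at the top of the sum, so $T(f) = m = p_j T(g_j) q_j$ for the unique $j\in S$, giving the required divisibility. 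It therefore suffices to rule out $|S|\ge 2$ under the assumption that the representation is minimal.

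Suppose $|S|\ge 2$ and pick distinct $i,j\in S$. The equality $p_i T(g_i) q_i = p_j T(g_j) q_j$ of paths forces two occurrences of the tips of elements of $\mathfrak{G}$ inside one common path; after stripping common initial and terminal subpaths one is in exactly one of three situations: (a) $T(g_i)$ divides $T(g_j)$, (b) $T(g_j)$ divides $T(g_i)$, or (c) the two tips properly overlap. Tip-reducedness rules out (a) and (b). Case (c) produces $p,q\in\mathfrak{P}$ with $T(g_i)q = p\,T(g_j)$ (or symmetrically) and neither $T(g_i)$ dividing $p$ nor $T(g_j)$ dividing $q$; this is precisely the input data for the overlap relation $o(g_i,g_j,p,q)$. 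The hypothesis $o(g_i,g_j,p,q)=0$ yields an identity
\[
\tfrac{1}{C(T(g_i))} g_i q \;=\; \tfrac{1}{C(T(g_j))} p\, g_j
\]
in $K\Gamma$. Substituting this identity into $f$, using that the common left and right factors from $p_i,p_j,q_i,q_j$ can be peeled off, replaces the pair of terms indexed by $i$ and $j$ by a combination whose top monomials are strictly smaller than $m$ (since $T(o(g_i,g_j,p,q)) < T(g_i)q = p\,T(g_j)$ and admissible axioms (1),(2) preserve strict inequality under left and right multiplication by paths along which the products are nonzero, guaranteed by the uniform hypothesis). Iterating this substitution within $S$ strictly decreases $|S|$, or else decreases $m$, contradicting the minimality of the representation. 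This closes the argument.

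The main obstacle is the bookkeeping in case (c): confirming that after canceling the common initial and terminal subpaths from $p_iT(g_i)q_i$ and $p_jT(g_j)q_j$, what remains genuinely matches the overlap data $(g_i,g_j,p,q)$ in the sense of the paper, and that multiplying an overlap relation on the left and right by paths coming from the $p_k$ and $q_k$ preserves strict inequality of tips (this is where the uniform condition is essential, so that nonzero leading terms imply nonzero products throughout). Once these are handled, the well-ordering of $>$ makes the descent argument run.
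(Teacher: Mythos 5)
The paper does not prove this result; it cites Green's Theorem~2.3 and moves on, so the only thing to assess is whether your argument is actually sound. It has a genuine gap in the case analysis at the heart of the descent.

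After you strip the common prefix and suffix from the equality $p_iT(g_i)q_i = p_jT(g_j)q_j$, you claim the only possibilities are (a) $T(g_i)$ divides $T(g_j)$, (b) $T(g_j)$ divides $T(g_i)$, or (c) a proper overlap. That enumeration is incomplete: the two tip occurrences may be \emph{disjoint} inside the common path $m$. Concretely, with $p_i$ a prefix of $p_j$ (WLOG), write $p_j = p_iu$; then $T(g_i)q_i = uT(g_j)q_j$, and if $\ell(u)\ge \ell(T(g_i))$ one has $u = T(g_i)r$ and $q_i = rT(g_j)q_j$, so the two tips sit side by side separated by $r$, with neither a factor of the other and no overlap. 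This configuration is explicitly excluded from the overlap data, since here $T(g_i)$ divides $p := T(g_i)r$ and $T(g_j)$ divides $q := rT(g_j)$, which violates the side conditions in the definition of an overlap relation. So the hypothesis of the theorem tells you nothing about this pair, and your substitution step has no identity to invoke.

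The disjoint case is still reducible, but by a different (telescoping) mechanism rather than by the overlap hypothesis: writing $p_ig_iq_i$ and $p_jg_jq_j$ over the common path $m = p_iT(g_i)\,r\,T(g_j)q_j$, one has
\[
p_ig_i\,r\,T(g_j)q_j - p_iT(g_i)\,r\,g_jq_j
 = p_ig_i\,r\,\bigl(T(g_j)-g_j\bigr)q_j + p_i\bigl(g_i-T(g_i)\bigr)r\,g_jq_j,
\]
and each summand on the right has tip strictly below $m$ by admissibility and uniformity. This is a standard step (it is the same dichotomy that appears in Bergman's Diamond Lemma and in Green's own proof), but it must be stated and used; without it your claim that $|S|\ge 2$ is impossible under minimality does not follow. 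You should also note the degenerate subcase $p_i=p_j$, $T(g_i)=T(g_j)$, which by tip-reducedness forces $g_i=g_j$ and lets you merge the two terms; this is easy but currently unmentioned. With these two additions, the minimal-representation descent closes correctly.
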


\section{Coxeter Systems and Associated Path Algebras}\label{sec5}
In this section, we introduce   the Path Algebras related to Coxeter systems and the rings $R= R_{W, S}$ and $\widetilde{R} = \widetilde{R}_{W, S}$ from  Dyer \cite{Noncom}, discussed in the introduction. We also prove a number of lemmas which will be used in later sections to simplify the proof of our main result. 
\subsection{Path Algebras associated to Coxeter Systems}
 \label{cox}A {\em Coxeter matrix} is a matrix $(m_{ij})_{i, j \in I}$, where $I$ is a set of indices, with 
$m_{ii} = 1$ for all $i \in I$ and $m_{ij} = m_{ji} \in \mathbb{N}_{\geq 2}\cup \{\infty\}$ for all $i, j \in I, i \not= j$. 
If $S = \{s_i\}_{i \in I}$ is a set indexed by $I$, the corresponding {\em Coxeter group} is the group with 
presentation
$$W = \langle s_i (i \in I) | (s_ks_l)^{m_{kl}} = 1 \ \mbox{for} \ s_k, s_l \in S \ \mbox{with} \ m_{kl} \not= \infty \rangle.$$
The pair $(W, S)$ is called a {\em Coxeter system} with Coxeter matrix $(m_{ij})_{i, j \in I}$. We say that $(W, S)$ has {\em finite rank} if $S$ is finite.

We associate to a  Coxeter system $(W, S)$ a graph $\Gamma_{W, S}$ with  vertices $s_i \in S$  and 
 edges  $Y_{W, S} = \left\{y_{ij} = [s_is_j]  |\  i \not= j \ \mbox{and} \  m_{ij} \geq 3\right\}$. 
 Let $\mathfrak {P}_{W, S}$ denote the set of paths in $\Gamma_{W, S}$. We let  $P_{W, S}$ denote the path algebra $\mathbb{Z}\Gamma_{W, S}$ and let $\hat{P}_{W, S}$ denote the path algebra $\mathbb{Q}\Gamma_{W, S}$. 

For $m \in \mathbb{Z}, m \geq 3$, let $C_m(t)$ denote the  minimum polynomial of $\displaystyle 4\cos^2\frac{\pi}{m}$ in $\mathbb{Z}[t]$. We have 
$$C_{3}(t) = t - 1, \ 
C_4(t) = t - 2,  \ 
C_5(t) = t^2 -3t + 1, \ 
C_6(t) = t - 3, \dots . $$
To each edge $y_{ij} = [s_is_j]$ of $\Gamma_{W, S}$ where $3 \leq m_{ij} < \infty$, we associate an element  $C_{y_{ij}}$ of the 
subring $[s_i]P_{W, S}[s_i]\subset  {P}_{W, S} $ (resp. $[s_i] \hat{P}_{W, S}[s_i] \subset \hat{P}_{W, S}$), where the imbedding is non-unital. 
Notice that the ring  $[s_i]P_{W, S}[s_i]$ (resp. $[s_i] \hat{P}_{W, S}[s_i]$) has identity $[s_i]$, making  evaluation of the above polynomials  possible over the ring. We let 
$$C_{y_{ij}} = C_{m_{ij}}([s_is_js_i]) = C_{m_{ij}}(y_{ij}\bar{y}_{ij}),$$ 
with $C_{m_{ij}}$ viewed as a polynomial over the ring $[s_i]P_{W, S}[s_i]$ (resp. $[s_i] \hat{P}_{W, S}[s_i]$). 
 Let $I_{W, S}$, (respectively $\hat{I}_{W, S}$),  be the ideal 
  of $P_{W, S}$ (resp. $\hat{P}_{W, S}$)
 generated by the set $\mathfrak {C}_{W, S} = \left\{C_y |  y \in Y_{W, S}\right\}$, and  let 
  $R_{W, S} = P_{W, S}/I_{W,S}$ (resp.  $\hat{R}_{W, S} = \hat{P}_{W, S}/\hat{I}_{W, S}$) denote the  quotient algebras.

 \begin{lemma} \label{grob} Let $\Gamma_{W, S}$ be a  graph associated to a finite rank Coxeter System $(W, S)$ with Coxeter matrix $(m_{ij})_{\{i, j \in I\}}$.   Then  $\mathfrak {C}_{W, S}$  is a Gr\"{o}bner basis for the ideal  $\hat{I}_{W, S}$ in the path algebra $\hat{P}_{W,S}$.
 \end{lemma}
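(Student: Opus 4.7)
The plan is to apply Theorem \ref{overlap} to $\mathfrak{C}_{W,S}$ with respect to any admissible well order on $\mathfrak{P}_{W,S}$ that prioritizes length (for instance the length-lexicographic order, after fixing a total order on $S\cup Y_{W,S}$). Since $C_{m_{ij}}(t)$ is the minimum polynomial over $\mathbb{Z}$ of the algebraic integer $4\cos^2(\pi/m_{ij})$, it is monic, of some degree $d_{ij}$; substituting $t=y_{ij}\bar y_{ij}$ then gives
$$C_{y_{ij}} = (y_{ij}\bar y_{ij})^{d_{ij}} + \text{terms of strictly smaller path-length},$$
so I would first record that $T(C_{y_{ij}}) = (y_{ij}\bar y_{ij})^{d_{ij}} = [s_i(s_js_i)^{d_{ij}}]$ with leading coefficient $1$, and that each $C_{y_{ij}}$ is uniform since all its terms lie in $[s_i]\hat P_{W,S}[s_i]$.

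Next I would check tip-reducedness. Any subpath of $T(C_{y_{ij}})$ uses only the edges $y_{ij}$ and $\bar y_{ij}$, so if $T(C_{y_{kl}})$ divides $T(C_{y_{ij}})$ then $\{y_{kl},\bar y_{kl\}}\subseteq\{y_{ij},\bar y_{ij}\}$, which forces $\{k,l\}=\{i,j\}$; comparing lengths ($2d_{ij}=2d_{kl}$) and then starting vertices rules out the case $(k,l)=(j,i)$, leaving $(k,l)=(i,j)$.

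The heart of the proof is enumerating and killing the overlap relations $o(C_{y_{ij}},C_{y_{kl}},p,q)$. The same edge-set argument restricts any non-trivial overlap $T(C_{y_{ij}})q = pT(C_{y_{kl}})$ to $\{k,l\}=\{i,j\}$, and direct bookkeeping on alternating vertex sequences then produces exactly two families:
\begin{itemize}
\item self overlaps, with $(k,l)=(i,j)$ and $p=q=(y_{ij}\bar y_{ij})^m$ for $1\le m\le d_{ij}-1$;
\item cross overlaps, with $(k,l)=(j,i)$ and $p=q=(y_{ij}\bar y_{ij})^m y_{ij}$ for $0\le m\le d_{ij}-1$.
\end{itemize}
Writing $x=y_{ij}\bar y_{ij}$ and $y=\bar y_{ij}y_{ij}$, the one-line identity $y_{ij}y=xy_{ij}$ extends by induction to $y_{ij}f(y)=f(x)y_{ij}$ for every polynomial $f$. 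The self-overlap relation then reduces to $C_{m_{ij}}(x)x^m - x^m C_{m_{ij}}(x)$, zero by commutativity in $\mathbb{Q}[x]$, and the cross-overlap relation reduces to $C_{m_{ij}}(x)x^m y_{ij} - x^m y_{ij}C_{m_{ij}}(y)$, zero by the intertwining identity. Theorem \ref{overlap} then yields the claim.

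The main obstacle is the overlap bookkeeping: one must track the alternating vertex sequences carefully and verify that the exclusion conditions $T(f)\nmid p$, $T(g)\nmid q$ rule out no further overlaps. Once that is done, the algebraic content collapses to the single commutation $y_{ij}(\bar y_{ij}y_{ij}) = (y_{ij}\bar y_{ij})y_{ij}$.
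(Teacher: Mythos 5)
Your proposal is correct and follows essentially the same line as the paper's own proof: both identify $T(C_{y_{ij}}) = (y_{ij}\bar{y}_{ij})^{d_{ij}}$, check uniformity and tip-reducedness, restrict overlaps to the self- and cross- cases on a single geometric edge, and kill them via the commutation $y_{ij}(\bar y_{ij}y_{ij}) = (y_{ij}\bar y_{ij})y_{ij}$ (which the paper leaves implicit in the phrase ``it is clear''). Your writeup is slightly more explicit about the intertwining identity and about enumerating the ranges of $m$, but it is the same argument.
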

 \begin{proof} Regardless of the admissible ordering used, $T(C_{y_{ij}}) = (y_{ij}\bar{y}_{ij})^{d_{ij}}$, where degree $C_{m_{ij}} = d_{ij}$, for 
 all $y_{ij} \in Y_{W, S}$ with $m_{ij} < \infty$. 
Clearly  the set $\mathfrak {C}_{W, S}$ is tip reduced and uniform, therefore, by Theorem \ref{overlap}, we need only verify that 
 all overlap relations $o(C_{y_{ij}}, C_{y_{kl}}, p, q)$, $y_{ij}, y_{kl}  \in Y, m_{ij}, m_{kl} < \infty, p, q \in \mathfrak {P}_{W, S}$ are zero. Consider such an overlap relation, $T(C_{y_{ij}})q = pT(C_{y_{kl}})$, for $p, q \in \mathfrak {P}$, where $T(C_{y_{kl}})$ does not divide $q$ and $T(C_{y_{ij}})$ does not divide $p$. Since $T(C_{y_{ij}}) = (y_{ij}\bar{y}_{ij})^{d_{ij}}$  and 
 $T(C_{y_{kl}}) = (y_{kl}\bar{y}_{kl})^{d_{kl}}$, and $T(C_{y_{kl}})$ does not divide $q$, we must have 
 $T(C_{y_{kl}}) = aq$ for some $a \in \mathfrak {P}_{W, S}$ where $\ell(a) \geq 1$. Hence 
 $$T(C_{y_{ij}})q = (y_{ij}\bar{y}_{ij})^{d_{ij}}q = pT(C_{y_{kl}}) = paq = p(y_{kl}\bar{y}_{kl})^{d_{kl}},$$
  and since $a$ divides both $(y_{ij}\bar{y}_{ij})^{d_{ij}}$ and $(y_{kl}\bar{y}_{kl})^{d_{kl}}$, we must have that either  $y_{kl} = y_{ij}$ or $y_{kl} = \bar{y}_{ij}$.  If $y_{kl} = y_{ij}$, then
 $(y_{ij}\bar{y}_{ij})^{d_{ij}}q =  p(y_{ij}\bar{y}_{ij})^{d_{ij}}$ and by comparing lengths and edges, we see 
 that $p = q = (y_{ij}\bar{y}_{ij})^{k}$ for some $k$.  Thus  $C_{y_{ij}}p - qC_{y_{ij}} = 0$ in this case.
  If $y_{kl} = \bar{y}_{ij}$, then $(y_{ij}\bar{y}_{ij})^{d_{ij}}q =  p(\bar{y}_{ij}{y}_{ij})^{d_{ij}}$. In this case,
  a comparison of lengths and edges shows that $p = q = (y_{ij}\bar{y}_{ij})^{k}y_{ij}$ for some $k$ and 
  it is clear that  $C_{y_{ij}}p - qC_{\bar{y}_{ij}} = 0$. This completes our proof.
  \end{proof}

\subsection{}Let $J \subset I$ and $S_J = \{s_j | j \in J\}$. Let $(W_J, S_J)$ denote the Coxeter system with Coxeter matrix $(m_{ij})_{i, j \in J}$. The group 
$W_J$ has generators and relations
$$W = \langle s_j (j \in J) | (s_ks_l)^{m_{kl}} = 1 \ \mbox{for} \ s_k, s_l \in S_J \ \mbox{with} \ m_{kl} \not= \infty \rangle.$$
and is called 
a {\em standard parabolic subgroup} of $W$. 
If $J_1 \subseteq J_2 \subseteq I$ we have an inclusion map of graphs 
from  the vertices $S_{J_1}$ and edges $Y_{W_{J_1}, S_{J_1}}$ in 
 $\Gamma_{W_{J_1}, S_{J_1}}$ to  the corresponding vertices and edges in  $\Gamma_{W_{J_2}, S_{J_2}}$. 
  This  extends to an inclusion map 
 $i_{J_1, J_2}: \mathfrak {P}_{W_{J_1}, S_{J_1}} \to  \mathfrak {P}_{W_{J_2}, S_{J_2}}$, which in turn gives us a non-unital  monomorphism of $\mathbb{Q}$ algebras; 
 $\hat{i}_{J_1, J_2}: \hat{P}_{W_{J_1}, S_{J_1}} \to  \hat{P}_{W_{J_2}, S_{J_2}}$. Since $\hat{i}_{J_1, J_2}(\mathfrak {C}_{W_{J_1}, S_{J_1}})
 \subseteq \mathfrak {C}_{W_{J_2}, S_{J_2}}$, we get a well defined induced non-unital  homomorphism of quotient algebras:
 $$\tilde{i}_{J_1, J_2}: \hat{R}_{W_{J_1}, S_{J_1}} \to  \hat{R}_{W_{J_2}, S_{J_2}}.$$
 
 \begin{lemma} \label{impar}Let $(W, S = \{s_i\}_{i \in I})$ be a Coxeter system.  Let $J_1, J_2$ be finite  subsets of $I$ with $J_1 \subseteq J_2$, then the map 
   $\tilde{i}_{J_1, J_2}: \hat{R}_{W_{J_1}, S_{J_1}} \to  \hat{R}_{W_{J_2}, S_{J_2}}$ 
   is a monomorphism of non-unital $\mathbb{Q}$-algebras. 
   \end{lemma}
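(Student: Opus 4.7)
The plan is to use the Gröbner basis description from Proposition \ref{prop} together with Lemma \ref{grob} to identify each algebra $\hat{R}_{W_{J_k}, S_{J_k}}$ (for $k=1,2$) with $\mathrm{Span}(NT(\hat{I}_{W_{J_k}, S_{J_k}}))$ as a $\mathbb{Q}$-vector space, and then to check that under suitably compatible admissible orderings the inclusion of path sets carries the basis $NT(\hat{I}_{W_{J_1}, S_{J_1}})$ injectively into $NT(\hat{I}_{W_{J_2}, S_{J_2}})$. Because $J_1$ and $J_2$ are finite, both graphs $\Gamma_{W_{J_k}, S_{J_k}}$ have finitely many vertices and edges, so Lemma \ref{grob} applies and $\mathfrak{C}_{W_{J_k}, S_{J_k}}$ is indeed a Gröbner basis of $\hat{I}_{W_{J_k}, S_{J_k}}$ for any admissible order.

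To set up compatible orders, I would pick any total order on the vertices $S_{J_2}$ and edges $Y_{W_{J_2}, S_{J_2}}$ with the property that every vertex precedes every edge and, moreover, every element (vertex or edge) lying in the sub-graph $\Gamma_{W_{J_1}, S_{J_1}}$ precedes every element not in $\Gamma_{W_{J_1}, S_{J_1}}$. Endow $\mathfrak{P}_{W_{J_2}, S_{J_2}}$ with the induced length-lexicographic order and $\mathfrak{P}_{W_{J_1}, S_{J_1}}$ with the restriction; by construction the restriction coincides with the length-lexicographic order on $\mathfrak{P}_{W_{J_1}, S_{J_1}}$ attached to the inherited ordering of its vertices and edges, and so is admissible.

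The core combinatorial observation is then that for a path $p \in \mathfrak{P}_{W_{J_1}, S_{J_1}}$, one has $p \in NT(\hat{I}_{W_{J_1}, S_{J_1}})$ if and only if $i_{J_1, J_2}(p) \in NT(\hat{I}_{W_{J_2}, S_{J_2}})$. Indeed, by Proposition \ref{prop}(5) applied to each Gröbner basis, the element $p$ is a tip in $\hat{I}_{W_{J_k}, S_{J_k}}$ precisely when $p$ is divisible (in $\mathfrak{P}_{W_{J_k}, S_{J_k}}$) by some $T(C_{y_{ij}}) = (y_{ij}\bar{y}_{ij})^{d_{ij}}$ with $y_{ij} \in Y_{W_{J_k}, S_{J_k}}$. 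Any such witness $y_{ij}$ has both extremities among the vertices appearing in $p$, all of which lie in $S_{J_1}$; hence every witness available in $\Gamma_{W_{J_2}, S_{J_2}}$ is already available in $\Gamma_{W_{J_1}, S_{J_1}}$, making the two divisibility conditions equivalent.

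Combining these facts, the inclusion $i_{J_1, J_2}: \hat{P}_{W_{J_1}, S_{J_1}} \to \hat{P}_{W_{J_2}, S_{J_2}}$ restricts to a map $\mathrm{Span}(NT(\hat{I}_{W_{J_1}, S_{J_1}})) \to \mathrm{Span}(NT(\hat{I}_{W_{J_2}, S_{J_2}}))$ that sends a $\mathbb{Q}$-basis injectively into a $\mathbb{Q}$-basis; composing with the $\mathbb{Q}$-linear isomorphisms furnished by Proposition \ref{prop}(4) recovers $\tilde{i}_{J_1, J_2}$, which is therefore injective. The step requiring the most care will be confirming that the normal form map commutes with inclusion, i.e., that $N(i_{J_1, J_2}(x))$ in $\hat{P}_{W_{J_2}, S_{J_2}}$ equals $i_{J_1, J_2}(N(x))$ for $x \in \hat{P}_{W_{J_1}, S_{J_1}}$; but the compatibility of orders together with the non-tip equivalence above makes this a short verification rather than a genuine obstacle.
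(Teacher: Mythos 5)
Your proof is correct and follows essentially the same route as the paper: identify each $\hat{R}_{W_{J_k},S_{J_k}}$ with $\span(NT(\hat{I}_{W_{J_k},S_{J_k}}))$ via Lemma \ref{grob} and Proposition \ref{prop}, observe that divisibility of a path $p\in\mathfrak{P}_{W_{J_1},S_{J_1}}$ by some $T(C_y)$ only involves edges with extremities among the vertices of $p$ (so non-tips map to non-tips), and conclude injectivity. The one minor difference is that you construct a specific compatible length-lex order rather than just restricting an arbitrary admissible order on $\mathfrak{P}_{W_{J_2},S_{J_2}}$ as the paper does; both are fine, and the restriction argument is slightly shorter since admissibility is clearly inherited.
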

   \begin{proof} Let $>$ be an admissible ordering on $\mathfrak {P}_{W_{J_2}, S_{J_2}}$. The restriction of $>$ to $i_{J_1, J_2}(\mathfrak {P}_{W_{J_1}, S_{J_1}})$,
   gives us an admissible ordering on $\mathfrak {P}_{W_{J_1}, S_{J_1}}$ which, for the sake of simplicity,  we will also denote by $>$.   
   By Proposition \ref{prop}, $\hat{R}_{W_{J_i}, S_{J_i}}$ can be identified as a vector space over $\mathbb{Q}$ with $\span(NT(\hat{I}_{W_{J_i}, S_{J_i}})), i \in \{1, 2\}$ (with respect to the ordering $>$).   If $p \in NT(\hat{I}_{W_{J_1}, S_{J_1}})$, then $p$ is not divisible by $T(C_y)$ for any $C_y \in \mathfrak {C}_{W_{J_1}, S_{J_1}}$. Since $p \in \mathfrak {P}_{W_{J_1}, S_{J_1}}$, $p$ cannot be divisible by $T(C_y)$ for any $C_y \in \mathfrak {C}_{W_{J_2}, S_{J_2}}\backslash\mathfrak {C}_{W_{J_1}, S_{J_1}}$ and thus $i_{J_1, J_2}(p) \in NT(\hat{I}_{W_{J_2}, S_{J_2}})$. Thus, the map 
    $i_{J_1, J_2}: \mathfrak {P}_{W_{J_1}, S_{J_1}} \to  \mathfrak {P}_{W_{J_2}, S_{J_2}}$ restricts to a one-to-one map 
     $i'_{J_1, J_2}: NT(\hat{I}_{W_{J_1}, S_{J_1}}) \to  NT(\hat{I}_{W_{J_2}, S_{J_2}})$. Using the vector space identification 
     above, we see that  $\tilde{i}_{J_1, J_2}: \hat{R}_{W_{J_1}, S_{J_1}} \to  \hat{R}_{W_{J_2}, S_{J_2}}$ 
   is a monomorphism as desired.
   \end{proof}
   \subsection{Direct Systems} We shall prove some of our  results involving rings attached to  infinite rank Coxeter groups by regarding them  as direct limits of rings attached to the finite rank standard parabolic subgroups.
   \begin{definition} A {\em directed poset} is a poset $(P, \leq)$, $P \not= \emptyset$,  such that  for any $p_1, p_2 \in P$, there is an element $p_3 \in P$ such that 
$p_1 \leq p_3$ and $p_2 \leq p_3$.  
\end{definition}

\begin{definition}\label{dsyst} A {\em direct system} $\left\{P, \{R_p\}, \{i_{p_1, p_2}\}\right\}$ in the category of non-unital $\mathbb{Q}$-algebras consists of a directed poset,  $P$, a collection of non-unital 
$\mathbb{Q}$-algebras, $\{R_p\}_{p \in P}$, and a set of morphisms of $\mathbb{Q}$ algebras, $\{i_{p_1, p_2}: R_{p_1} \to R_{p_2}\}_{p_1, p_2 \in P, p_1 \leq p_2}$ such that 
\begin{enumerate}
\item $i_{p, p }$ acts as the identity on $R_p$.
\item $i_{p_2, p_3} \circ i_{p_1, p_2} = i_{p_1, p_3}$ for $p_1, p_2, p_3  \in P,  \ p_1 \leq p_2 \leq p_3$.
\end{enumerate}
\end{definition}

\begin{note} When considering direct systems in the category of non-unital $\mathbb{Q}$-algebras throughout the remainder of this section, all associated morphisms considered will 
be morphisms in that category. 
\end{note}

\begin{definition}\label{umap} Let $\left\{P, \{R_p\}, \{i_{p_1, p_2}\}\right\}$ be a direct system of non-unital $\mathbb{Q}$ algebras. A 
$\mathbb{Q}$ algebra $R$ is called the direct limit of the direct system, denoted $ \varinjlim{R_p}$, 
if there exist  homomorphisms, $i_p: R_p \to R,\  p \in P$ such that 
\begin{enumerate}
\item For any  $p_1, p_2 \in P$, $p_1 \leq p_2$, the following diagram commutes:
 \begin{equation*}
\xymatrix{R_{p_1}\ar@{->}[dr]^{i_{p_1}}\ar@{->}[d]_{i_{p_1, p_2}}\\
{R_{p_2}}\ar@{->}[r]_{i_{p_2}}&{R}\\
}
\end{equation*}
\item Given a non-unital $\mathbb{Q}$ algebra, $Q$, and a set of  homomorphisms
$\phi_p : R_p \to Q$, $p \in P$, such that  the diagram 
 \begin{equation*}
\xymatrix{R_{p_1}\ar@{->}[dr]^{\phi_{p_1}}\ar@{->}[d]_{i_{p_1, p_2}}\\
{R_{p_2}}\ar@{->}[r]_{\phi_{p_2}}&{Q}\\
}
\end{equation*}
commutes for all $p_1, p_2 \in P$, $p_1 \leq p_2$, then there exists a unique  homomorphism $\phi: R \to Q$ such that the following diagram commutes for all $p \in P$:
 \begin{equation*}
\xymatrix{R_{p}\ar@{->}[dr]_{\phi_{p}}\ar@{->}[r]^{i_{p}}&R\ar@{->}[d]^{\phi}\\
&{Q}\\
}
\end{equation*}
%HERE
\end{enumerate}
\end{definition}

\begin{lemma} \label{const} Let $\left\{P, \{R_p\}, \{i_{p_1, p_2}\}\right\}$ be a direct system of non-unital $\mathbb{Q}$ algebras. If $i_{p_1, p_2}$
is a monomorphism for all $p_1, p_2 \in P$, then the homomorphisms $i_p : R_p \to  \varinjlim{R_p}$ making the diagram below commute, are also monomorphisms; 
 \begin{equation}\label{morph}
\xymatrix{R_{p_1}\ar@{->}[dr]^{i_{p_1}}\ar@{->}[d]_{i_{p_1, p_2}}\\
{R_{p_2}}\ar@{->}[r]_{i_{p_2}}&{ \varinjlim{R_p}}\\
}
\end{equation}
\end{lemma}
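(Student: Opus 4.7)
The plan is to work with the standard explicit construction of the direct limit in the category of non-unital $\mathbb{Q}$-algebras. Define $\varinjlim R_p$ as the quotient of $\bigsqcup_{p \in P} R_p$ by the equivalence relation $\sim$, where for $x \in R_{p_1}$ and $y \in R_{p_2}$ we declare $x \sim y$ iff there exists $p_3 \in P$ with $p_1, p_2 \leq p_3$ and $i_{p_1, p_3}(x) = i_{p_2, p_3}(y)$. Directedness of $P$ together with the cocycle condition $i_{p_2,p_3} \circ i_{p_1,p_2} = i_{p_1,p_3}$ ensure that $\sim$ is an equivalence relation. Given classes represented by $x \in R_{p_1}$ and $y \in R_{p_2}$, one chooses any $p_3 \geq p_1, p_2$ and defines the sum, product, and $\mathbb{Q}$-scalar multiplication after pushing both elements into $R_{p_3}$; a routine check shows these operations are independent of all choices. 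Setting $i_p(x) := [x]$, the commutativity of Diagram (\ref{morph}) is immediate from the definition of $\sim$, and the universal property required in Definition \ref{umap} is verified by $\phi([x]) := \phi_p(x)$.

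With this construction in hand, the injectivity claim is essentially immediate. Suppose $x \in R_p$ satisfies $i_p(x) = 0$ in $\varinjlim R_p$. Using $0 \in R_p$ as a representative of the zero class, the relation $[x] = [0]$ says that there exists $p' \geq p$ in $P$ with
\[
i_{p,p'}(x) \;=\; i_{p,p'}(0) \;=\; 0 \quad \text{in } R_{p'}.
\]
By the standing hypothesis that every structural map is a monomorphism, $i_{p,p'}$ is injective, so $x = 0$. Hence $\ker i_p = 0$, and $i_p$ is a monomorphism.

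The main (and in fact only) non-trivial work lies in verifying that the explicit quotient construction really does yield a non-unital $\mathbb{Q}$-algebra satisfying the universal property of Definition \ref{umap} --- every well-definedness check relies crucially on directedness of $P$ together with the coherence condition on the $i_{p_1,p_2}$. Once this foundation is in place, the injectivity of $i_p$ reduces to a one-line application of the hypothesis, as indicated above.
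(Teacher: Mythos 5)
Your proof is correct and takes essentially the same approach as the paper: construct the direct limit explicitly as the quotient of the disjoint union by the coherence equivalence relation, note that $[x]=[0]$ forces $i_{p,p'}(x)=0$ for some $p'\geq p$, and conclude $x=0$ from injectivity of the transition maps. Both arguments also invoke uniqueness of the direct limit up to isomorphism to justify working with this particular model.
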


\begin{proof}One can construct the direct limit of non-unital $\mathbb{Q}$ algebras as follows. 
Let 
$$R =  \varinjlim{R_p} = \bigcupdot_{P \in P} R_p/\sim$$
where $\sim$ is the equivalence relation on $\bigcupdot_{P \in P} R_p$ defined by 
$$r_{1} \sim r_{2}, r_1 \in R_{p_1}, r_2 \in R_{p_2} \ \mbox{iff.} \ i_{p_1, p_3}(r_1) =  i_{p_2, p_3}(r_2) \ \mbox{for some} \ p_3 > p_1, p_2.$$
The ring and algebra operations on the equivalence classes of  $R$, denoted by square brackets $[ ]$,  are defined as follows.
For each $\alpha \in \mathbb{Q}$ and each $[r] \in R$, let $\alpha[r] = [\alpha r]$. 
 Given $[r_1], [r_2] \in R$ such that $r_1 \in R_{p_1}$
and $r_2 \in R_{P_2}$, then for $p_3 > p_1, p_2$, we define $[r_1] + [r_2]  = [i_{p_1, p_3}(r_1) + i_{p_2, p_3}(r_2)]$ and 
$[r_1][r_2]  = [i_{p_1, p_3}(r_1) i_{p_2, p_3}(r_2)]$. The equivalence class $[0]$ gives a zero element for the ring $R$. 
It can be checked easily that R along with the homomorphisms $i_p:R_p \to R, p \in P$ given by $i_p(r) = [r]$ for $r \in R_p$
form a non-unital $\mathbb{Q}$ algebra making the diagram \ref{morph} above commute.
Given a non-unital $\mathbb{Q}$ algebra $Q$ and a set of morphisms $\phi_p: R_p \to Q$, $p \in P$, such that $\phi_{p_2}\circ i_{p_1, p_2} = 
\phi_{p_1}$, we define $\phi: R \to Q$ as $\phi([r_p]) = \phi_p(r_p)$ for $r_p \in R_p$. One can easily check that $\phi$ is a well defined homomorphism with the property that $\phi \circ i_p = \phi_p$ for all $p \in P$. It is unique since any homomorphism $\psi : R \to Q$ with 
$\psi \circ i_p = \phi_p$ must agree with $\phi$.

 By the universal mapping property, the direct limit is unique up to isomorphism, hence it is enough to show that the homomorphisms  $i_p: R_p \to R$ are monomorphism for all $p  \in P$  for the non-unital $\mathbb{Q}$ algebra $R$  constructed above.
Now it is clear that if $i_p(r) = [0]$ for some $r \in R_p, p \in P$, then we must have $i_{p, p_2}(r)  = 0 \in R_{p_2}$ for some 
$p_2 \in P$. Thus if all of the maps  $i_{p_1, p_2}, p_1, p_2 \in P$ are monomorphisms, we must have that each $i_p, p \in P$ is also a monomorphism.
\end{proof}

\subsection{} For the  Coxeter system $(W, S = \{s_i\}_{i \in I})$, let  $\mathcal{P}(I)$ denote the set of finite subsets of $I$. Then $\mathcal{P}(I)$  is a directed poset with inclusion as a partial ordering. It is not hard to verify that 
the quotient algebras associated to the  family of parabolic subgroups,  $\{\hat{R}_{W_J, S_J}\}_{J \in \mathcal{P}(I)}$, and the  homomorphisms 
$\tilde{i}_{J_1, J_2}:\hat{R}_{W_{J_1}, S_{J_1}} \to  \hat{R}_{W_{J_2}, S_{J_2}},$ $J_1 \subseteq J_2$,  form a direct system, $\left\{\mathcal{P}(I), \{\hat{R}_{W_J, S_J}\}, \{\tilde{i}_{J_1, J_2}\}\right\}$
 in the category of non-unital algebras over $\mathbb{Q}$.

\begin{lemma}\label{dirlim} Let $(W, S = \{s_i\}_{i \in I})$, be a Coxeter system and $\left\{\mathcal{P}(I), \{\hat{R}_{W_J, S_J}\}, \{\tilde{i}_{J_1, J_2}\}\right\}$ the  direct system  of quotient algebras associated to  the  parabolic subgroups of $W$ of finite rank. Then 
\begin{enumerate}
\item Given $x \in \hat{R}_{W, S}$, there exists $J \in \mathcal{P}(I)$ such that $x = \tilde{i}_{J, I}(x')$ for some $x' \in \hat{R}_{W_J, S_J}$.
\item The homomorphisms  $\tilde{i}_{J, I}: \hat{R}_{W_J, S_J} \to \hat{R}_{W, S}$ have the property that  $\tilde{i}_{J_2, I} \circ \tilde{i}_{J_1, J_2} = \tilde{i}_{J_1, I}$ for all $J_1, J_2 \in \mathcal{P}(I)$, 
$J_1 \subseteq  J_2$. 
\item Given a non-unital $\mathbb{Q}$ algebra $Q$ and a family of  homomorphisms $\{\phi_J : \hat{R}_{W_{J}, S_{J}} \to Q | J \in \mathcal{P}(I)\}$ such that the following diagram commutes for each $J_1 \subseteq  J_2 \subset I$, $J_1$ and $J_2$ finite;
 \begin{equation} \label{Qmaps}
\xymatrix{ \hat{R}_{W_{J_1}, S_{J_1}}\ar@{->}[dr]^{\phi_{J_1}}\ar@{->}[d]_{\tilde{i}_{J_1, J_2} }\\
{ \hat{R}_{W_{J_2}, S_{J_2}}}\ar@{->}[r]_{\phi_{J_2}}&{Q}\\
}
\end{equation}
then there exists a  homomorphism $\phi$ such that  the following diagram commutes for all finite subsets $J$ of $I$:
 \begin{equation} \label{Q}
\xymatrix{\hat{R}_{W_{J}, S_{J}}\ar@{->}[dr]_{\phi_{J}}\ar@{->}[r]^{ \tilde{i}_{J, I} }&\hat{R}_{W, S}\ar@{->}[d]^{\phi}\\
&{Q}\\
}
\end{equation}
\item $\hat{R}_{W, S} = \varinjlim{\hat{R}_{W_J, S_J}}$
in the category of non-unital $\mathbb{Q}$ algebras. 
\item  The maps $\tilde{i}_{J, I}: \hat{R}_{W_J, S_J} \to \hat{R}_{W, S}$ are  injective.
\end{enumerate}
\end{lemma}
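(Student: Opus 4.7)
The plan is to handle the five assertions in order, deducing (4) and (5) from the earlier parts together with Lemmas \ref{impar} and \ref{const}.

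For (1), I would lift $x\in\hat R_{W,S}$ to a representative $\hat x\in\hat P_{W,S}$, which by definition is a finite $\mathbb Q$-linear combination of paths in $\Gamma_{W,S}$. Each such path uses only finitely many vertices, so the union of the vertex sets over the finitely many paths appearing in $\hat x$ is a finite subset $J\subseteq I$. Then $\hat x$ lies in the image of the path-algebra inclusion $\hat i_{J,I}$, and passing to the quotient exhibits $x$ in the image of $\tilde i_{J,I}$. Part (2) is immediate: the graph-level identity $i_{J_2,I}\circ i_{J_1,J_2}=i_{J_1,I}$ is true by inspection and descends to path algebras and their quotients.

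For (3) I would define $\phi$ using part (1): for each $x\in\hat R_{W,S}$, choose some $J$ and $x'\in\hat R_{W_J,S_J}$ with $x=\tilde i_{J,I}(x')$, and set $\phi(x):=\phi_J(x')$. The crucial check is well-definedness. If $x=\tilde i_{J_1,I}(x_1')=\tilde i_{J_2,I}(x_2')$, choose lifts $\hat x_k\in\hat P_{W_{J_k},S_{J_k}}$ of $x_k'$. The difference $\hat i_{J_1,I}(\hat x_1)-\hat i_{J_2,I}(\hat x_2)$ lies in $\hat I_{W,S}$ and is therefore a finite sum $\sum_k a_k C_{y_k} b_k$ involving only finitely many further paths and edges. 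Enlarging $J_1\cup J_2$ to some $J_3\in\mathcal P(I)$ containing every vertex and edge that appears in the $a_k$, $b_k$, and $y_k$, the same expression realizes $\hat i_{J_1,J_3}(\hat x_1)-\hat i_{J_2,J_3}(\hat x_2)$ inside $\hat I_{W_{J_3},S_{J_3}}$, whence $\tilde i_{J_1,J_3}(x_1')=\tilde i_{J_2,J_3}(x_2')$ in $\hat R_{W_{J_3},S_{J_3}}$. Applying $\phi_{J_3}$ and invoking the commuting diagram \ref{Qmaps} twice then forces $\phi_{J_1}(x_1')=\phi_{J_2}(x_2')$. Compatibility of $\phi$ with scalar multiplication, addition, and multiplication follows by pulling any pair of given elements back to a common finite $J$ via part (2), while uniqueness of $\phi$ is forced by part (1) together with the prescribed diagram \ref{Q}.

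Parts (4) and (5) are then quick consequences. Part (4) is the observation that (1), (2), and (3) together verify the universal property in Definition \ref{umap} for the system $\{\hat R_{W_J,S_J}\}_{J\in\mathcal P(I)}$, identifying $\hat R_{W,S}$ with $\varinjlim\hat R_{W_J,S_J}$. Part (5) then combines Lemma \ref{impar} (which asserts that each structure map $\tilde i_{J_1,J_2}$ is a monomorphism) with Lemma \ref{const} (which lifts this to the canonical maps into the direct limit). The main obstacle is the well-definedness argument in part (3): one must show that any witness of ideal membership in the infinite-rank ring $\hat P_{W,S}$ can be pulled back to a finite parabolic, and this relies crucially on the fact that both the ambient element of $\hat I_{W,S}$ and the defining Chebyshev relations $C_y$ are themselves finite sums of paths using only finitely many vertices and edges.
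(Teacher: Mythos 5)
Your proof is correct and follows essentially the same outline as the paper's, through the same sequence of steps: finiteness of representatives for (1), the composition identity at the level of path-algebra cosets for (2), defining $\phi$ via a finite presentation for (3), and deducing (4) and (5) from the universal property together with Lemmas \ref{impar} and \ref{const}. The one point where you are more careful than the paper is the well-definedness check in (3). The paper simply asserts that well-definedness follows from (2) and the commutativity of Diagram \ref{Qmaps}, but this glosses over the need to realize two presentations $\tilde{i}_{J_1,I}(x_1') = \tilde{i}_{J_2,I}(x_2')$ as images of equal elements in some common finite $\hat{R}_{W_{J_3},S_{J_3}}$: the obvious route, via injectivity of $\tilde{i}_{J_3,I}$, is unavailable at that point since that injectivity is precisely part (5), which the paper derives from (3) and (4). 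Your argument --- lifting to path-algebra representatives and observing that any witness of membership in $\hat{I}_{W,S}$ is a finite $\hat{P}_{W,S}$-linear combination of the generators $C_y$, hence already visible in some $\hat{P}_{W_{J_3},S_{J_3}}$, so that $\tilde{i}_{J_1,J_3}(x_1')=\tilde{i}_{J_2,J_3}(x_2')$ --- supplies the step the paper leaves implicit, and does so without circular reliance on (5).
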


\begin{proof}
Given an $x = p + \hat{I}_{W, S} \in \hat{R}_{W, S}$, where $p \in P_{W, S}$,  then $p = \sum_1^n\alpha_ip_i$ where $p_i \in \mathfrak {P}_{W, S}$. 
Since each path $p_i$ is a product of only a finite number of edges, we have $x = \tilde{i}_{J, I}(x')$ for 
some $J \in \mathcal{P}(I)$ and $x' \in \hat{R}_{W_{J}, S_{J}}$. This proves (1).

Since, $(\tilde{i}_{J_2, I} \circ \tilde{i}_{J_1, J_2})(p + \hat{I}_{W_{J_1}, S_{J_1}}) = \tilde{i}_{J_1, I}(p)$ for all $J_1, J_2 \in \mathcal{P}(I)$, 
$J_1 \subseteq  J_2$, and all $p \in \mathfrak {P}_{W_{J_1}, S_{J_1}}$, (2) follows.

Assume now that we are given a non-unital $\mathbb{Q}$ algebra $Q$ and a family of  homomorphisms $\{\phi_J : \hat{R}_{W_{J}, S_{J}} \to Q | J \in \mathcal{P}(I)\}$ making  Diagram  \ref{Qmaps} commute. Let $x = p + \hat{I}_{W, S} \in \hat{R}_{W, S}$. By (1),  $x = \tilde{i}_{J, I}(x')$ for 
some $J \in \mathcal{P}(I)$ and $x' \in \hat{R}_{W_{J}, S_{J}}$. We let $\phi(x) = \phi_J(x')$. 
By (2) and the commutativity of Diagram \ref{Qmaps}, $\phi(x)$ is independent of the choice of $J$. For  each $C_y \in \mathfrak {C}_{W, S}$, $\phi(C_y + \hat{I}_{W, S}) = \tilde{i}_{J, I}(C_y + \hat{I}_{W_J, S_J}) =  \tilde{i}_{J, I}(0) = 0$, where $J = \{i, j | o(y) = s_i, t(y) = s_j\}$. Thus $\phi$ is well defined. That the diagram \ref{Q} commutes follows from the definition of $\phi$, and since any  homomorphism  making \ref{Q} commute must agree with $\phi$, we see it is the unique homomorphism with this property.  This establishes (3). 

From Definition \ref{umap}, we see that (4) follows from (2) and (3). 
By Lemma \ref{impar}, the maps $\tilde{i}_{J_1, J_2}$ in our direct system are monomorphisms. Therefore  (5) follows from   (2), (3), (4) and Lemma \ref{const}. 
This completes the proof.
\end{proof}

\begin{lemma}\label{baaisN} Let $(W, S)$ be a Coxeter system and $\Gamma_{W, S}$, the associated graph.  Let   $\hat{\pi}$ denote the quotient 
map $\hat{P}_{W,S}$ to $\hat{R}_{W,S}$. Let $\mathfrak {N} = \{x \in \mathfrak {P}_{W, S} | x \not= ptq, t \in T(\mathfrak {C}_{W, S}), p, q \in \mathfrak {P}_{W, S}\}$. 
The 
set $\hat{\pi}(\mathfrak {N})$ forms a basis for $\hat{R}_{W, S}$ as a vector space over $\mathbb{Q}$.
\end{lemma}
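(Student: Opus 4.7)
The plan is to reduce to the finite-rank case, where the result is essentially immediate from the Gröbner-basis machinery of Section \ref{palgs}, and then to pass to arbitrary $(W,S)$ using the direct-limit description of $\hat{R}_{W,S}$ established in Lemma \ref{dirlim}. A useful preliminary observation is that the tips $T(C_{y_{ij}}) = (y_{ij}\bar{y}_{ij})^{d_{ij}}$ are independent of the choice of admissible order (as noted in the proof of Lemma \ref{grob}), so the set $\mathfrak{N}$ appearing in the statement is in fact canonical.

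For finite-rank $(W,S)$, I would fix an admissible order $>$ on $\mathfrak{P}_{W,S}$ (e.g.\ length-lexicographic; available since $S$ and therefore the edge set of $\Gamma_{W,S}$ are finite). By Lemma \ref{grob}, $\mathfrak{C}_{W,S}$ is a Gröbner basis for $\hat{I}_{W,S}$ with respect to $>$. The set $\mathfrak{N}$ is by definition the set of paths not divisible by any element of $T(\mathfrak{C}_{W,S})$, which is exactly the basis of $\mathrm{Span}(NT(\hat{I}_{W,S}))$ described immediately before Proposition \ref{prop}. Proposition \ref{prop}(4),(5) then identifies $\hat{R}_{W,S}$ with $\mathrm{Span}(NT(\hat{I}_{W,S}))$ and shows that $\hat\pi(\mathfrak{N})$ is a $\mathbb{Q}$-basis.

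For the general case, write $\mathfrak{N}_J$ for the analogous set of non-tip paths in $\mathfrak{P}_{W_J,S_J}$ for each finite $J \subseteq I$. Since an edge of $\Gamma_{W,S}$ involving only indices in $J$ is an edge of $\Gamma_{W_J,S_J}$, one checks directly that $\mathfrak{N} \cap i_{J,I}(\mathfrak{P}_{W_J,S_J}) = i_{J,I}(\mathfrak{N}_J)$, and that every path in $\mathfrak{P}_{W,S}$ lies in some such image. Spanning: given $x \in \hat{R}_{W,S}$, Lemma \ref{dirlim}(1) yields a finite $J$ with $x = \tilde{i}_{J,I}(x')$ for some $x' \in \hat{R}_{W_J,S_J}$; by the finite-rank case, $x'$ is a $\mathbb{Q}$-combination of elements $\hat\pi_J(n)$ with $n \in \mathfrak{N}_J$, and the intertwining $\tilde{i}_{J,I}\circ\hat\pi_J = \hat\pi\circ i_{J,I}$ expresses $x$ as a combination of elements of $\hat\pi(\mathfrak{N})$. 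Linear independence: any finite relation $\sum_k a_k\hat\pi(n_k)=0$ with $n_k \in \mathfrak{N}$ distinct involves only finitely many edges, so all $n_k$ lie in $i_{J,I}(\mathfrak{N}_J)$ for some finite $J$; by the injectivity statement Lemma \ref{dirlim}(5), the relation pulls back to $\hat{R}_{W_J,S_J}$, where the finite-rank case forces all $a_k=0$.

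The only substantive step is the compatibility $\mathfrak{N} \cap i_{J,I}(\mathfrak{P}_{W_J,S_J}) = i_{J,I}(\mathfrak{N}_J)$, which would otherwise obstruct the direct-limit argument; it follows cleanly from the ordering-independent description of $T(\mathfrak{C}_{W,S})$ above. Everything else is formal manipulation of the direct system.
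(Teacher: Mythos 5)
Your proof is correct and takes essentially the same route as the paper: reduce to the finite-rank case via the direct-limit description of $\hat{R}_{W,S}$ from Lemma \ref{dirlim}, use the Gr\"obner-basis machinery (Lemma \ref{grob}, Proposition \ref{prop}) to identify $\hat\pi_J(\mathfrak{N}_J)$ with the standard non-tip basis in each finite piece, and then glue via Lemma \ref{dirlim}(1) for spanning and Lemma \ref{dirlim}(5) for linear independence. The only difference is that you make explicit two points the paper leaves tacit --- that the tips $T(C_{y_{ij}})=(y_{ij}\bar{y}_{ij})^{d_{ij}}$ are independent of the choice of admissible order (so $\mathfrak{N}$ is well-defined even when $S$ is infinite and no global admissible order is available), and the compatibility $\mathfrak{N}\cap i_{J,I}(\mathfrak{P}_{W_J,S_J})=i_{J,I}(\mathfrak{N}_J)$ needed to move non-tips back and forth along the direct system --- which is a genuine improvement in clarity but not a different argument.
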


\begin{proof}
Let  $x \in \hat{R}_{W, S}$, by Lemma \ref{dirlim} (1), we have $x = \tilde{i}_{J, I}(r)$ for some finite $J \subseteq I$ and   $r \in \hat{R}_{W_{J}, S_{J}}$. Hence $x \in \span(\tilde{i}_{J, I}(NT(\hat{I}_{W_J, S_J}))) \subseteq \span(\hat{\pi}(\mathfrak {N}))$. let  $x_1, x_2, \dots , x_n$ be a subset of 
$\hat{\pi}(\mathfrak {N})$ such that $\sum_{i = 1}^n\alpha_ix_i = 0$ for some $\alpha_1, \alpha_2, \dots , \alpha_n \in 
\mathbb{Q}$, then we have $x_i = \tilde{i}_{J, I}(p_i + \hat{I}_{W_J, S_J})$ for all $i$, for some finite $J \subseteq I$ with each   $p_i \in 
NT(\hat{I}_{W_J, S_J})$. 
Thus, since $\tilde{i}_{J, I}$ is a monomorphism by Lemma \ref{dirlim} (5), we have $\sum_{i = 1}^n\alpha_ip_i +  \hat{I}_{W_J, S_J} = 0$ in $\hat{R}_{W_{J}, S_{J}}$ and since the cosets corresponding to $NT(\hat{I}_{W_J, S_J})$ form a vector space basis in 
$\hat{R}_{W_{J}, S_{J}}$, we see that $\alpha_i = 0$ for $1 \leq i \leq n$. Hence the elements of $\hat{\pi}(\mathfrak {N})$ 
span $\hat{R}_{W, S}$ and are linearly independent over $\mathbb{Q}$ and thus form a basis for $\hat{R}_{W, S}$
as a vector space over $\mathbb{Q}$. 
\end{proof}

\begin{lemma}\label{Rtohat} Let $(W, S)$ be a Coxeter system and $\Gamma_{W, S}$, the associated graph. The inclusion map $i : P_{W, S} \to \hat{P}_{W, S}$ which identifies  the  basis of paths  $\mathfrak {P}_{W, S}$ in both algebras,  induces a monomorphism $\tilde{i} : R_{W, S} \to \hat{R}_{W, S}$.
\end{lemma}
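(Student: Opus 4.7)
The plan is to verify well-definedness and then reduce injectivity to a lifting problem solved by a monic Gröbner-style reduction over $\mathbb{Z}$. For well-definedness, I would note that $i\colon P_{W,S}\to \hat{P}_{W,S}$ sends the generator set $\mathfrak{C}_{W,S}$ of $I_{W,S}$ to the identical generator set of $\hat{I}_{W,S}$, so $i(I_{W,S})\subseteq \hat{I}_{W,S}$ and the induced map $\tilde i$ on quotients exists. The injectivity question then becomes: if $p\in P_{W,S}$ lies in $\hat{I}_{W,S}$ when regarded inside $\hat{P}_{W,S}$, does it already lie in $I_{W,S}$?

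The first main step will be to establish that $P_{W,S} = I_{W,S} + \mathbb{Z}\cdot\mathfrak{N}$ as $\mathbb{Z}$-modules, where $\mathfrak{N}$ is the set of non-tip paths from Lemma \ref{baaisN}. I would exploit the fact that each $C_{m_{ij}}(t)\in \mathbb{Z}[t]$, being the minimal polynomial of the algebraic integer $4\cos^2(\pi/m_{ij})$, is monic; so in $[s_i]P_{W,S}[s_i]$ one has $C_{y_{ij}} = (y_{ij}\bar y_{ij})^{d_{ij}} + (\text{lower-degree terms})$ with integer coefficients. Hence if a path $p\in \mathfrak{P}_{W,S}$ contains the tip $T(C_y)=(y\bar y)^{d_y}$ as a factor, say $p=\alpha(y\bar y)^{d_y}\beta$, one may rewrite
\[ p \;=\; \alpha C_y\beta \;-\; \alpha\bigl(C_y-(y\bar y)^{d_y}\bigr)\beta, \]
where the first term lies in $I_{W,S}$ and the second is an integer combination of paths with strictly smaller tip. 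Iterating this reduction along an admissible well-ordering terminates, by well-foundedness, in an element of $\mathbb{Z}\cdot\mathfrak{N}$, which gives the asserted decomposition.

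The second step combines this with Lemma \ref{baaisN}. Given $p\in P_{W,S}$ with $i(p)\in \hat{I}_{W,S}$, decompose $p = q + n$ with $q\in I_{W,S}$ and $n\in \mathbb{Z}\cdot\mathfrak{N}$. Then $n = p - q$ lies in $\hat{I}_{W,S}$ as well, so $\hat\pi(n)=0$ in $\hat{R}_{W,S}$. But $n$ is a $\mathbb{Q}$-linear combination of elements of $\mathfrak{N}$, and by Lemma \ref{baaisN} the images $\hat\pi(\mathfrak{N})$ are $\mathbb{Q}$-linearly independent in $\hat{R}_{W,S}$, forcing every coefficient of $n$ to vanish. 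Hence $n=0$ and $p = q\in I_{W,S}$, so $\tilde i$ is injective.

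The only nontrivial ingredient I anticipate is the monic reduction in the second paragraph; its point is precisely that the monicity of the polynomials $C_{m_{ij}}$ allows the usual Gröbner-style reduction procedure to be carried out over $\mathbb{Z}$ without inverting any leading coefficient. Infinite rank causes no trouble because the reduction is performed path-by-path, and Lemma \ref{baaisN} is formulated without any finiteness hypothesis on $S$.
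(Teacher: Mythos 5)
Your proof is correct and takes essentially the same route as the paper's: both rest on the observation that monicity of $C_{m_{ij}}(t)\in\mathbb{Z}[t]$ yields the $\mathbb{Z}$-module decomposition $P_{W,S}=I_{W,S}+\mathbb{Z}\cdot\mathfrak{N}$, and then combine it with Lemma \ref{baaisN} to force the complementary part to vanish. The paper packages the conclusion as a commutative-diagram chase through $M=\mathbb{Z}\cdot\mathfrak{N}$ and $V=\span(\mathfrak{N})$ while you argue directly on ideal membership; and the paper's reduction is an induction on path length (multiply one edge at a time onto an element of $\mathfrak{N}$), whereas you perform a tip-reduction that terminates by well-foundedness --- in fact you need no admissible well-order at all, since each rewrite replaces a path by a $\mathbb{Z}$-combination of strictly shorter paths. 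These are cosmetic differences; the mathematical content is identical.
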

\begin{proof} It is clear that $\tilde{i}$ is a well defined homomorphism of rings,
since $\mathfrak {C}_{W, S}$  is a generating set for both $I$ and $\hat{I}$.  
We let  $\pi$ and $\hat{\pi}$ denote the quotient 
maps from ${P}_{W,S}$ to $R_{W,S}$ and $\hat{P}_{W,S}$ to $\hat{R}_{W,S}$ respectively.
We let $\mathfrak {N} = \{x \in \mathfrak {P}_{W, S} | x \not= ptq, t \in T(\mathfrak {C}_{W, S}), p, q \in \mathfrak {P}_{W, S}\}$. 
By Lemma \ref{baaisN}, we have that $\hat{\pi}(\mathfrak {N})$ is a basis of  $\hat{R}_{W,S}$ as a vector space over $\mathbb{Q}$. 
Letting  $M$  denote the $\mathbb{Z}$-submodule of ${P}_{W, S}$ generated by $\mathfrak {N}$, and $V = \span(\mathfrak {N})$ the subspace of 
$\hat{P}_{W,S}$ as a $\mathbb{Q}$ vector space,
we get a commutative diagram;
 \begin{equation}\label{dia}
\xymatrix{
{P_{W,S}}\ar@{->}[rrrrr]^{{i}}\ar@{->}[ddr]_{\pi}&&&&&{\hat{P}_{W,S}}\ar@{->}[ddll]^{\hat{\pi}}\\
&{M}\ar@{_{(}->}[ul]\ar@{->}[rr]^(.3){i|_M}\ar@{->}[d]^{\pi|_M}&&{V = \span(\mathfrak {N})}\ar@{^{(}->}[urr]\ar@{->}[d]^{\hat{\pi}|_V}\\
&{R_{W, S}}\ar@{->}[rr]^{\tilde{i}}&&{\hat{R}_{W,S}}\\
}
\end{equation}
where
${\pi|_M}$ and 
$\hat{\pi}|_V$
denote the restrictions of the ring homomorphisms $\pi$ and $\hat{\pi}$ 
 as $\mathbb{Z}$ module and vector space homomorphism respectively. 
By Lemma  \ref{baaisN}, $\hat{\pi}|_V$ is an isomorphism of vector spaces. In particular $\hat{\pi}|_V$ is a monomorphism.

We claim that ${\pi|_M}$ maps $M$ onto $R_{W, S}$. We show that $P_{W, S} = M + I_{W, S}$.
First, we note that the paths of length zero, $\{[s_i] | s_i \in S \}$ are in $\mathfrak {N} \subset M$.  Next, we show that given any $p \in \mathfrak {N}$,  and any $y_{ij} \in Y$, with $j = o(p)$, then 
  $y_{ij}p  = q + I_{W, S}$ for some $q \in M$. If  $y_{ij}p \in \mathfrak {N}$, 
then $q = y_{ij}p$ and we are done. Otherwise, $p \in \mathfrak {N}$ and $y_{ij}p \not\in \mathfrak {N}$. Thus  $p$  is not divisible by any element of $T(\mathfrak {C}_{W, S})$
and  $y_{ij}p$ is divisible by some element of $T(\mathfrak {C}_{W, S})$. Comparing edges along the paths $p$ and $y_{ij}p$, we see that 
$y_{ij}p = T(C_{y_{ij}})p_1$, for some $p_1 \in \mathfrak {P}$. Thus, since  $C_{y_{ij}}$ is a monic polynomial over $\mathbb{Z}$  in $y_{ij}\bar{y}_{ij}$, we have $y_{ij}p - C_{y_{ij}}p_1$ is a $\mathbb{Z}$- linear combination of paths which divide $p$. 
Since paths which divide $p$ are already in normal form and are therefore in $\mathfrak {N}$,
 we have   $y_{ij}p - C_{y_{ij}}p_1\in M$ and  $y_{ij}p \in M + I_{W, S}$ as asserted. 
To prove our claim, we note that because  $[s_i] \in \mathfrak {N}$ for all  $s_i \in S$, and 
$y_{ij}p \in M  + I_{W, S}$ for all $y_{ij} \in Y$ and $p \in \mathfrak {N}$ such that $o(p) = j$, we have $q +  I_{W, S} \in M + I_{W, S}$ for any $q \in \mathfrak {P}$. Therefore as a $\mathbb{Z}$ module, 
$P_{W, S} = M + I_{W, S}$ and thus  ${\pi|_M}$ maps $M$ onto $R_{W, S}$,
 proving  our claim. 

Now since $\hat{P}_{W, S} \cong \mathbb{Q}\otimes_{\mathbb{Z}}P_{W, S}$ and $P_{W, S}$ is a free $\mathbb{Z}$ module, 
we have that $i$ is a monomorphism. Thus its restriction $i|_M$ is also  a monomorphism. Because $\pi|_M$ is an epimorphism and $i|_M$ and $\hat{\pi}|_V$ are both monomorphisms, we can conclude from our diagram $\ref{dia}$ that  $\tilde{i}$ is a monomorphism as desired.
\end{proof}

\begin{corollary}\label{corN}Let $(W, S)$ be a Coxeter system and $\Gamma_{W, S}$, the associated graph. 
 Let   ${\pi}$ denote the quotient 
map ${P}_{W,S}$ to ${R}_{W,S}$. Let $\mathfrak {N} = \{x \in \mathfrak {P}_{W, S} | x \not= ptq, t \in T(\mathfrak {C}_{W, S}), p, q \in \mathfrak {P}_{W, S}\}$. 
The 
elements of the set ${\pi}(\mathfrak {N})$ are pairwise distinct and  form a basis for ${R}_{W, S}$ as a  $\mathbb{Z}$-module.
\end{corollary}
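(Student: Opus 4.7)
The plan is to piggyback on the two preceding results: Lemma \ref{baaisN} gives that $\hat{\pi}(\mathfrak{N})$ is a $\mathbb{Q}$-basis of $\hat{R}_{W,S}$, and Lemma \ref{Rtohat} gives an injection $\tilde{i}\colon R_{W,S}\hookrightarrow \hat{R}_{W,S}$ fitting into the commutative diagram (\ref{dia}) with $\tilde{i}\circ \pi = \hat{\pi}\circ i$. The whole corollary will fall out by transferring properties of $\hat{\pi}(\mathfrak{N})$ back through $\tilde{i}$.

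First I would establish that $\pi(\mathfrak{N})$ spans $R_{W,S}$ as a $\mathbb{Z}$-module. This is essentially already done inside the proof of Lemma \ref{Rtohat}: there it is shown that $P_{W,S} = M + I_{W,S}$, where $M$ is the $\mathbb{Z}$-submodule of $P_{W,S}$ generated by $\mathfrak{N}$. Applying $\pi$ gives $R_{W,S} = \pi(M)$, which is exactly the $\mathbb{Z}$-span of $\pi(\mathfrak{N})$. I would simply cite that portion of the proof of Lemma \ref{Rtohat}.

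Next I would show pairwise distinctness and $\mathbb{Z}$-linear independence simultaneously. Suppose $p_1,\dots,p_n\in\mathfrak{N}$ (not necessarily distinct) and $\alpha_1,\dots,\alpha_n\in\mathbb{Z}$ satisfy $\sum_{i=1}^n \alpha_i\,\pi(p_i)=0$ in $R_{W,S}$. Applying the monomorphism $\tilde{i}$ and using commutativity of diagram (\ref{dia}),
\begin{equation*}
0 = \tilde{i}\Bigl(\sum_{i=1}^n\alpha_i\,\pi(p_i)\Bigr) = \sum_{i=1}^n \alpha_i\,\hat{\pi}(i(p_i)) = \sum_{i=1}^n \alpha_i\,\hat{\pi}(p_i)
\end{equation*}
in $\hat{R}_{W,S}$. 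Since $\hat{\pi}(\mathfrak{N})$ is a $\mathbb{Q}$-basis of $\hat{R}_{W,S}$ by Lemma \ref{baaisN}, its elements are pairwise distinct and $\mathbb{Q}$-linearly independent; collecting terms with coinciding $p_i$ and invoking that independence forces every $\alpha_i$ to vanish. The same argument, applied to the difference $\pi(p)-\pi(q)$ for $p\neq q$ in $\mathfrak{N}$, shows $\hat{\pi}(p)\neq\hat{\pi}(q)$ hence $\pi(p)\neq\pi(q)$, giving pairwise distinctness.

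There is no substantive obstacle here; all the machinery is in place from Lemmas \ref{baaisN} and \ref{Rtohat}. The only thing to be a bit careful about is the identification $\tilde{i}\circ\pi=\hat{\pi}\circ i$ together with the fact that $i$ sends a path in $\mathfrak{P}_{W,S}$ to the same path viewed in $\hat{P}_{W,S}$, so that $\tilde{i}(\pi(p))=\hat{\pi}(p)$ for every $p\in\mathfrak{N}$; this makes the transfer of the basis property from $\mathbb{Q}$-coefficients back to $\mathbb{Z}$-coefficients immediate.
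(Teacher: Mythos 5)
Your proposal is correct and follows essentially the same route as the paper: spanning is extracted from the $P_{W,S}=M+I_{W,S}$ step inside the proof of Lemma \ref{Rtohat}, and $\mathbb{Z}$-linear independence (hence pairwise distinctness) is obtained by pushing a hypothetical relation through the injection $\tilde{i}$ into $\hat{R}_{W,S}$ and invoking the $\mathbb{Q}$-basis statement of Lemma \ref{baaisN}. The paper phrases the second step via the composite monomorphism $\hat{\pi}|_V\circ i|_M = \tilde{i}\circ\pi|_M$ from diagram (\ref{dia}) rather than applying $\tilde{i}$ term-by-term, but that is only a cosmetic difference.
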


 \begin{proof} As in the proof of Lemma \ref{Rtohat}, let $M$ be the $\mathbb{Z}$-submodule of ${P}_{W,S}$ spanned by $\mathfrak {N}$. 
 Since $\pi|_M$ is onto, we get that $\pi(\mathfrak {N})$ spans  ${R}_{W, S}$ as a  $\mathbb{Z}$-module. From the proof of Lemma \ref{Rtohat},
 we have a commutative diagram:
  \begin{equation}\label{dia2}
\xymatrix{
{P_{W,S}}\ar@{->}[rrrrr]^{{i}}\ar@{->}[ddr]_{\pi}&&&&&{\hat{P}_{W,S}}\ar@{->}[ddll]^{\hat{\pi}}\\
&{M}\ar@{_{(}->}[ul]\ar@{->}[rr]^(.3){i|_M}\ar@{->}[d]^{\pi|_M}&&{V = \span(\mathfrak {N})}\ar@{^{(}->}[urr]\ar@{->}[d]^{\hat{\pi}|_V}\\
&{R_{W, S}}\ar@{->}[rr]^{\tilde{i}}&&{\hat{R}_{W,S}}\\
}
\end{equation}
 where $\hat{\pi}|_V\circ i|_M$ is a monomorphism. Thus $\tilde{i}\circ \pi|_M$ is a monomorphism and since $\tilde{i}\circ \pi|_M(\mathfrak {N})$ forms a 
 basis over $\mathbb{Q}$ for $\hat{R}_{W, S}$ by Lemma \ref{baaisN}, we must have that ${\pi}(\mathfrak {N})$ are linearly independent over $\mathbb{Z}$ in  ${R}_{W, S}$.
 \end{proof}
 
  \begin{exmp} 
   Let $(W, S)$ be the Coxeter system with Coxeter matrix 
   $$\left(\begin{array}{ccccc}1 & 3 & 2 & 4 & 2 \\3 & 1 & 5 & 2 & 2 \\2 & 5 & 1 & 6 & 5 \\4 & 2 & 6 & 1 & \infty \\2 & 2 & 5 & \infty & 1\end{array}\right),$$
  then $(W, S)$ be a Coxeter system with generators   $S = \{s_1 = r, s_2 = s, s_3 = t, s_4 = u, s_5 = v\}$ 
  and relations 
  $$r^2 = s^2 = t^2 = u^2 = v^2 = 1 =(rs)^3 = (st)^5 = (tu)^6 = (tv)^5 = (ru)^4 = 
  (rt)^2 = (su)^2 = (vr)^2 = (vs)^2.$$
   Let 
  $\Gamma_{W, S}$ be the associated graph. On the left below, we show the edges in $Y_{W, S, +}$ as before 
   on the right,  we label each geometric edge $\{s_is_j\},
 s_i, s_j \in S$, in the corresponding graph with  $m_{ij}$, if $(s_is_j)^{m_{ij}} = 1$ is among the above relations and $m_{ij} \not= 2$. The remaining edges are labelled with infinity. 
 \begin{equation*}
\xymatrix{s\ar@{->}[r]^{y_{\epsilon}}&t\ar@{->}[r]^{y_\zeta}&{v}&\\
{r}\ar@{->}[u]^{y_\alpha}\ar@{->}[r]_{y_\gamma}&{u}\ar@{->}[u]^{y_\beta}\ar@{->}[ur]_{y_\delta}\\
}
\qquad
\xymatrix{
s\ar@{-}[r]^{5}&t\ar@{-}[r]^{5}&{v}&\\
{r}\ar@{-}[u]^{3}\ar@{-}[r]_{4}&{u}\ar@{-}[u]^{6}\ar@{-}[ur]_{\infty}\\
}
\end{equation*}
We have  $\hat{P}_{W, S} = \mathbb{Q}\Gamma_{W, S}$ and $\hat{I}_{W, S}$ is  the ideal of $\hat{P}_{W, S}$ with the following  generators:

$$
\begin{array}{lll}
C_{y_\alpha} = [rsr] - [r],  & C_{{y}_\gamma} = [rur] - 2[r],  & C_{{y}_\beta} = [utu] - 3[u], \\[1em]
 C_{\bar{y}_\alpha} = [srs]-[s],  &C_{\bar{y}_\gamma} = [uru] - 2[u], & C_{\bar{y}_\beta} = [tut] - 3[t]\\[1em]
C_{{y}_{\epsilon}} = [ststs] - 3[sts] + [s], &C_{{y}_\zeta}  = [tvtvt] - 3[tvt] + [t], &\\[1em]
 C_{\bar{y}_{\epsilon}} = [tstst] -3[tst] +[t], & C_{\bar{y}_\zeta} = [vtvtv] -3[vtv] +[v].  & \\[1em]
\end{array}
$$ 
 \end{exmp}

 \subsection{The Ring  $\widetilde R$}
 In this section, we give the definition of the ring  $\widetilde R =  \widetilde R_{W, S}$ for a Coxeter System. The ring has relations given by polynomials satisfying a recurrence relation. To facilitate a discussion of the representation theory motivating the theory in the next section, we give the definition of the 
 polynomials in the more general setting of a ring with idempotents. 
 \subsection{} \label{cheb} Let $B$ be a (possibly non-unital) ring. For any idempotents $e,f\in B$ and elements $a\in fBe$, $b\in eBf$, define elements $c_{n}(a,b,e,f)\in B$ for $n\in \mathbb{Z}$  by the recurrence formulae
\[c_{2n+2}(a,b,e,f)=ac_{2n+1}(a,b,e,f)-c_{2n}(a,b,e,f),\qquad c_{2n+1}(a,b,e,f)=bc_{2n}(a,b,e,f)-c_{2n-1}(a,b,e,f)\]
for $n\in \mathbb{Z}$  and initial conditions
\[c_{0}(a,b,e,f)=0,\qquad c_{1}(a,b,e,f)=e.\]
We have 
\[c_2(a,b,e,f) = a, \ c_3(a,b,e,f) = ba - e, \ c_4(a,b,e,f) = aba - 2a, \ c_5(a,b,e,f) = baba - 3ba  + e, \dots \]

Define also elements $C_{n}(a,b,e,f)\in B$ for $n\in \mathbb{N}_{\geq 2}$ as follows. Let $C_{2}(a,b,e,f)=a$.
For $n\geq 3$, let 
$C_{n}(a,b,e,f):=\rho_{B,e,a,b}(C_{n}(t))$  where $C_{n}(t) \in \mathbb{Z}[t]$ is the minimal polynomial over $\mathbb{Q}$ of (the algebraic integer) $\displaystyle 4\cos^{2}\frac{\pi}{n}$, and  $\rho_{B,e,a,b}$ denotes the the ring homomorphism $\rho_{B,e,a,b}\colon\mathbb{Z}[t]\to eBe$ which maps $1\mapsto e$ and $t\mapsto ba$ (note that the ring $eBe\ni e,ba$ has identity element $e$).
We see that 
\[C_3(a,b,e,f) = ba - e, \ C_4(a,b,e,f) = ba - 2e,  \ C_5(a,b,e,f) =  baba - 3ba  + e, \ C_6(a,b,e,f) = ba - 3e, \ \dots \]
We write $c_{n}=c_{n,B}$ and  $C_{n}=C_{n,B}$ if it is necessary to indicate dependence on the ring $B$. Then for a ring homomorpism $\theta\colon B\to B'$, one has
$\theta(c_{n,B}(a,b,e,f))=c_{n,B'}(\theta(a),\theta(b),\theta(e),\theta(f))$ and similarly with $c_{n}$ replaced by $C_{n}$ if $n\geq 2$.

\subsection{} \label{fact} By  \cite{Noncom},    $c_{n}(a,b,e,f)=\prod_{N\in\mathbb{N}_{\geq 2},N\mid n}C_{N}(a,b,e,f)$ for all $n\in \mathbb{N}_{\geq 2}$, where $N\mid n$ means $N$ is a divisor of $n$ in $\mathbb{Z}$ and  the order of factors in the product on the right is immaterial except that $C_{2}(a,b,e,f)$ must be the leftmost factor if $n$ is even. In fact, if the pairwise distinct  positive, non-unit integer divisors, of $N$ are  $ N_{1},\ldots, N_{k}$, then  
 $c_{n}(a,b,e,f)=C'_{N_{1}}\ldots C'_{N_{k}}$ where 
 $C'_{N}:=C_{N}(a,b,e,f)$ unless $n$ is even and  $N=N_{i}$
 where $1\leq i < j\leq k$ and $N_{j}=2$, in which case
 $C'_{N}:=C_{N}(b,a,f,e)$. The equivalence of these various factorizations is trivial on noting that all $C_{n}(a,b,e,f)$ with $n\geq 3$ lie in the commutative unital subring of $B$ generated by $e$ and $ba$, with $aC_{n}(a,b,e,f)=C_{n}(b,a,f,e)a$.

 \subsection{}\label{noncomrep} We now take notation as in \ref{cox}. Thus, $P_{W,S}$ is the path algebra over $\mathbb{Z}$  of $\Gamma_{W,S}$, $I_{W,S}$ is an  ideal  of $P_{W,S}$ and $R_{W,S}$ is the quotient ring $R_{W,S}:=P_{W,S}/I_{W,S}$.

By definition,  $I_{W,S}$ is the two-sided ideal of $P_{W,S}$ generated by  elements
$C_{[sr]} = C_{m_{r, s}}([srs]) = C_{m_{r,s}}([rs],[sr],[s],[r])$ of \ref{cheb}, for $(r,s)\in S\times S$ such that
$2<m_{r,s}<\infty$. Let $\widetilde{I}_{W,S}$ denote the two-sided ideal of $P_{W,S}$ generated by the elements
$c_{m_{r,s}}([rs],[sr],[s],[r])$ for $(r,s)\in S\times S$ such that
$2<m_{r,s}<\infty$. From \ref{cheb}, we have $\widetilde{I}_{W,S}\subseteq I_{W,S}$. Define the quotient ring
$\widetilde R_{W,S}:=P_{W,S}/\widetilde{I}_{W,S}$ of $P_{W,S}$.
The above gives a canonical surjective  ring homomorphism $\widetilde R_{W,S}\to R_{W,S}$.

\subsection{Parameterizing Bases of $R_{W,S}$ and $\hat{R}_{W,S}$.}\label{refreps}
 Recall from Tits solution to the word problem for $(W,S)$  that   a sequence $(s_{1},\ldots, s_{n})$ in $S$ is the unique reduced expression of the corresponding product $s_{1}\cdots s_{n}\in W$ if and only if  it contains no consecutive subsequence $(s_{i},\ldots,s_{j})$, where $1\leq i<j\leq n$, of length $j-i+1$ which  either  is of the form $(s,s)$ for some $s\in S$ or is an alternating sequence $(s,r,s,\ldots)$ where $r,s\in S$ are distinct with $m_{r,s}=j-i+1$.  Let $W_{1}$ be the set of all such sequences with $n\geq 1$.

Let $\varphi\colon \mathbb{N}_{\geq 1}\to \mathbb{N}$ denote the Euler totient function (that is, $\varphi(n)=\vert (\mathbb{Z}/n\mathbb{Z})^{*}\vert$ where  $(\mathbb{Z}/n\mathbb{Z})^{*}$ indicates the unit group. Note that  if $n\geq 3$, then  $\frac{\varphi(n)}{2}$ is the degree of $C_{n}(t)\in \mathbb{Z}[t]$.  

Let $W_{1}'$ be the set of all sequences $(s_{1},\ldots, s_{n})$ in $S$, with $n\geq 1$,  which contain no  consecutive 
subsequence $(s_{i},\ldots,s_{j})$ which is of the form
$(s,r,s,\ldots)$ where $r,s\in S$, $ m_{r,s}<\infty$ and $j-i+1
=1+\varphi(m_{r,s})$.  Note that $W'_{1}\subseteq W_{1}$.

It is known from \cite{Noncom} that the elements    $[s_{1}\ldots s_{n}]$ of $\widetilde R_{W,S}$ for $(s_{1},\ldots, s_{n})\in W_{1}$ are pairwise distinct   and form a $\mathbb{Z}$-module basis of $\widetilde R_{W,S}$\footnote{A different basis for $\widetilde R$, with some very favorable properties, is also considered in \cite{Noncom}}. 

\begin{theorem}\label{basispar}
\begin{enumerate}
\item The  elements $[s_{1}\ldots s_{n}]$ of $ \hat R_{W,S}$ for $(s_{1},\ldots, s_{n})\in W'_{1}$ are pairwise distinct   and form a basis of $\hat R_{W,S}$ as vector space over $\mathbb{Q}$. 
\item The  elements $[s_{1}\ldots s_{n}]$ of $R_{W,S}$ for $(s_{1},\ldots, s_{n})\in W'_{1}$ are pairwise distinct   and form a $\mathbb{Z}$-module basis of $R_{W,S}$.
\end{enumerate} \end{theorem}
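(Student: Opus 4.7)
The plan is to identify the set $\mathfrak{N}$ appearing in Lemma \ref{baaisN} and Corollary \ref{corN} explicitly with the set $W'_{1}$, regarded as a subset of $\mathfrak{P}_{W,S}$ via vertex sequences, and then quote those two results directly. Because $\Gamma_{W,S}$ is combinatorial, every path $p\in \mathfrak{P}_{W,S}$ is uniquely written as $p=[s_{i_{0}}s_{i_{1}}\cdots s_{i_{n}}]$ with each consecutive pair joined by an edge (equivalently, $m_{s_{i_{k}},s_{i_{k+1}}}\geq 3$). The forbidden-subsequence condition defining $W'_{1}$, applied to length-two subsequences, rules out $(s,s)$ (the case $r=s$, $m_{r,r}=1$, so $1+\varphi(1)=2$) and any $(r,s)$ with $m_{r,s}=2$ (since $1+\varphi(2)=2$). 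Hence a sequence $(s_{1},\ldots,s_{n})\in W'_{1}$ has consecutive entries distinct and adjacent in $\Gamma_{W,S}$, so the assignment $(s_{1},\ldots,s_{n})\mapsto [s_{1}\cdots s_{n}]$ (with length-one sequences mapped to length-zero paths $[s]$) embeds $W'_{1}$ into $\mathfrak{P}_{W,S}$.

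Next I would read off the tips. By the proof of Lemma \ref{grob}, under any admissible order
\[T(C_{y_{ij}})=(y_{ij}\bar{y}_{ij})^{d_{ij}},\qquad T(C_{\bar y_{ij}})=(\bar y_{ij}y_{ij})^{d_{ij}},\qquad d_{ij}=\deg C_{m_{ij}}(t)=\tfrac{\varphi(m_{ij})}{2},\]
for every $y_{ij}\in Y_{W,S}$ with $m_{ij}<\infty$. Translated into vertex sequences, these are precisely the alternating strings in $\{s_{i},s_{j}\}$ of length $\varphi(m_{ij})+1$ (starting at either endpoint). Consequently a path $p=[s_{i_{0}}\cdots s_{i_{n}}]$ satisfies $p=p_{1}\,T(C_{y})\,p_{2}$ for some $y$ and some $p_{1},p_{2}\in \mathfrak{P}_{W,S}$ if and only if its vertex sequence contains a consecutive alternating subsequence in a pair $\{r,s\}\subseteq S$ with $m_{r,s}<\infty$ of length $1+\varphi(m_{r,s})$. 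This is exactly the defining prohibition for $W'_{1}$, so the embedding of the previous paragraph restricts to a bijection $W'_{1}\stackrel{\sim}{\longleftrightarrow}\mathfrak{N}$.

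Part (1) now follows by applying Lemma \ref{baaisN}: since $\hat{\pi}(\mathfrak{N})$ is a $\mathbb{Q}$-basis of $\hat R_{W,S}$ and the elements of $\mathfrak{N}$ (which are distinct paths in $\mathfrak{P}_{W,S}$) map bijectively to a basis, the corresponding elements $[s_{1}\cdots s_{n}]$ for $(s_{1},\ldots,s_{n})\in W'_{1}$ are pairwise distinct and form a $\mathbb{Q}$-basis of $\hat R_{W,S}$. Part (2) is identical with Corollary \ref{corN} in place of Lemma \ref{baaisN}, giving a $\mathbb{Z}$-module basis of $R_{W,S}$.

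The only delicate step is the combinatorial identification of $\mathfrak{N}$ with $W'_{1}$; once the tip formula $T(C_{y_{ij}})=(y_{ij}\bar y_{ij})^{\varphi(m_{ij})/2}$ is in hand (from Lemma \ref{grob}) and the degenerate cases $m_{r,s}\in\{1,2\}$ are observed to enforce adjacency of consecutive entries in any $W'_{1}$ sequence, there is nothing further to prove beyond citing the two basis results.
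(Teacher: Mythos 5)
Your proposal is correct and follows the same route as the paper: the paper's own proof is a one-line citation of Lemma \ref{baaisN} and Corollary \ref{corN}, leaving the identification of $\mathfrak{N}$ with $W'_{1}$ implicit. You supply exactly that missing bridge. Two details worth confirming, which you get right: (i) because a path in the combinatorial graph $\Gamma_{W,S}$ has consecutive vertices distinct and joined by an edge, every $p\in\mathfrak{P}_{W,S}$ automatically avoids the length-two prohibitions $(s,s)$ (the case $r=s$, $m_{r,r}=1$, $1+\varphi(1)=2$) and $(r,s)$ with $m_{r,s}=2$ ($1+\varphi(2)=2$), so the map $W'_{1}\to\mathfrak{P}_{W,S}$ by vertex sequences is well defined with image contained in the full path set; and (ii) the tips of both $C_{y_{ij}}$ and $C_{\bar{y}_{ij}}$ are in $T(\mathfrak{C}_{W,S})$, giving alternating strings of length $\varphi(m_{ij})+1$ starting at either endpoint, which matches the full set of forbidden subsequences for $3\leq m_{ij}<\infty$. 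With that, $\mathfrak{N}$ is exactly the image of $W'_{1}$, and the two basis statements read off from Lemma \ref{baaisN} and Corollary \ref{corN} respectively, as you say.
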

\begin{proof} Part (1) follows from Lemmas \ref{baaisN} and Part (2) follows from Corollary \ref{corN}.
 \end{proof}
 
  \begin{exmp}\label{Weyl}  (1) Suppose that $m_{r,s}\in \{2,3,4,6\}$ for all $r\neq s$ in $S$.  Since $\varphi(m)=2$ for $m=3,4,6$,     the sequences $(s_{1},\ldots, s_{n})\in W_{1}'$ are precisely the  sequences of successive vertices of non-backtracking paths in $\Gamma_{W,S}$. If, further,
 the Coxeter graph of $(W,S)$ is a tree, then for any  $r,s\in S$, there is a unique non-backtracking path in $\Gamma_{W,S}$ from $r$ to $s$ and it follows that
 \[ \dim_{\mathbb{Q}}([r] \hat R_{W,S} [s])=\mathrm{rank}_{\mathbb{Z}}([r] R_{W,S} [s])=1,\] and so  
  \[ \dim_{\mathbb{Q}}( \hat R_{W,S})=\mathrm{rank}_{\mathbb{Z}}(R_{W,S} )=\vert S\vert^{2}.\]  In particular, these facts all hold  if $(W,S)$ is an 
  irreducible Weyl group or an irreducible affine Weyl group which is not of type $\widetilde A_{n}$ for any $n\in \mathbb{N}_{\geq 1}$. 
  
  (2) Suppose that for each $r\neq s$ in $S$, $m_{r,s}$ is either $\infty$ or a prime integer. Then $I_{W,S}=\widetilde I_{W,S}$ and  hence $R_{W,S} =\widetilde R_{W,S}$.
 \end{exmp}

 \section{Reflection Representations}\label{secrealreps}
 In this section, we describe results of Dyer on the relation between standard reflection representations of Coxeter systems in real vector spaces and lax and strict reflection representations over non-commutative rings. 
 \subsection{}  
 Let $A$ be a commutative ring. Let $B$ be an $A$-algebra, and $M$ a left $A$-algebra module for $B$, as defined in Section \ref{intro}. 
For a group $G$,  a representation of $G$ on the $B$-module  $M$ is by definition a group homomorphism $\theta\colon G\to \mathrm{Aut}_{B}(M)$. We usually  write $gm:=(\theta(g))(m)$ and say simply that $G$ acts $B$-linearly on $M$ (leaving tacit the condition that the action is also $A$-linear). Similar conventions apply to right $G$ actions and right modules.

 For the $A$-algebra $B$, the   left (resp., right) regular $B$-module  is $B$ as $A$-module,   with $B$-action by left (resp., right)  multiplication. 
If  $B$ is unital and $M$ is its left regular module, then $\mathrm{End}_{B}(M)\cong B^{\mathrm{op}}$, where $B^{\mathrm{op}}$ is the opposite  algebra,  and $\mathrm{Aut}_{B}(M)$ identifies with the group
 of  units  of $B^{\mathrm{op}}$. If $B$ is commutative  unital, we often write  $\mathrm{GL}_{B}(M):= \mathrm{Aut}_{B}(M)$.

\subsection{}\label{refdatum}  Let $M$  be a left $B$-module, $\ck M$ be a right $B$-module and $\mpair{-,-}\colon M\times \ck M
\to B$ be a $B$-bilinear map. That is, for all $r,r'\in B$,  we require 
$\mpair{rm+r'm',m''}=r\mpair{m,m''}+r'\mpair{m',m''}$ for $m,m'\in M$ and  $m''\in \ck M$, and
$\mpair{m'', mr+m'r'}=\mpair{m'',m}r+\mpair{m'',m'}r'$ for $m''\in M$ and $m,m'\in \ck M$. We also require these conditions to hold with $r$ and $r'$ in the coefficient ring, $A$,  of $B$.  Suppose given families
$(\alpha_{s})_{s\in S}$ in $M$, $(\ck\alpha_{s})_{s\in S}$ in $\ck M$ and $(e_{s})_{s\in S}$ in $B$ such that for all $s\in S$  $e_{s}^{2}=e_{s}$, $e_{s}\alpha_{s}=\alpha_{s}$, $ \ck \alpha_{s}e_{s}=\ck\alpha_{s}$ and $\mpair{\alpha_{s},\ck\alpha_{s}}=2e_{s}$.
Define the $S\times S$ matrix $A=(a_{r,s})_{r,s\in S}$ with entries in $B$ by $a_{r,s}:=\mpair{\alpha_{r},\ck\alpha_{s}}\in e_{r }B e_{s}$.

Define $B$-module  endomorphisms $\phi_{s}\in \mathrm{End}_{B}(M)$ of $M$ for $s\in S$   by $\phi_{s}(m)=m-\mpair{m,\ck\alpha_{s}}\alpha_{s}$ for all $m\in M$.
It is easy to check that $\phi_{s}^{2}=\mathrm{Id}_{M}$ for all $s\in S$, so $\phi_{s}\in \mathrm{Aut}_{B}(M)$.

The following is a main result in Dyer \cite{Noncom}:

\begin{prop}\label{refrep}(Dyer \cite{Noncom}) Suppose notation is as above and that for all $(r,s)\in S\times S$ with $r\neq s$ and $m_{r,s}\neq \infty$, one has
$c_{m_{r,s}}(a_{s,r},a_{r,s},e_{r},e_{s})=0$.
Then there is a left $B$-linear left $W$-action on $M$ 
determined by $sm=m-\mpair{m,\ck\alpha_{s}}\alpha_{s}$ for all $m\in M$ and $s\in S$ and a right $B$-linear  left $W$-action    on $\ck M$ determined by $sm'=m'-\ck\alpha_{s}\mpair{\alpha_{s},m'}$ for all $m'\in \ck M$. These satisfy $\mpair{wm,wm'}=\mpair{m,m'}$ for all $m\in M$, $m'\in \ck M$ and $w\in W$.
\end{prop}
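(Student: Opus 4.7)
The plan is to verify that the assignments $s \mapsto \phi_{s}$ (on $M$) and $s \mapsto \psi_{s}$ (on $\ck M$, where $\psi_{s}(m') = m' - \ck\alpha_{s}\mpair{\alpha_{s}, m'}$) extend to group homomorphisms $W \to \mathrm{Aut}_{B}(M)$ and $W \to \mathrm{Aut}_{B}(\ck M)$ by checking the Coxeter relations; the pairing invariance is then a short calculation on generators. The involutive relations $\phi_{s}^{2} = \mathrm{Id}_{M}$ and $\psi_{s}^{2} = \mathrm{Id}_{\ck M}$ are already noted (they use only $\mpair{\alpha_{s}, \ck\alpha_{s}} = 2e_{s}$ together with $e_{s}\alpha_{s} = \alpha_{s}$, $\ck\alpha_{s}e_{s} = \ck\alpha_{s}$), so the entire content is in the braid relations
\[(\phi_{r}\phi_{s})^{m_{r,s}} = \mathrm{Id}_{M}, \qquad (\psi_{r}\psi_{s})^{m_{r,s}} = \mathrm{Id}_{\ck M}, \qquad r \neq s, \ m_{r,s} < \infty.\]

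My main step is a direct inductive computation of the alternating composites $\phi_{s}\phi_{r}\phi_{s}\cdots$ applied to an arbitrary $m \in M$, tracking coefficients in front of $\alpha_{r}$ and $\alpha_{s}$. Since $\phi_{s}$ subtracts a $B$-multiple of $\alpha_{s}$, for any $n \geq 0$ the vector $(\phi_{r}\phi_{s}\cdots)_{n}(m) - m$ lies in $B\alpha_{r} + B\alpha_{s}$, and the coefficients are $B$-linear in $\mpair{m, \ck\alpha_{r}}$ and $\mpair{m, \ck\alpha_{s}}$. The plan is to check, by induction on $n$, that these coefficients are exactly the elements $c_{k}(a_{s,r}, a_{r,s}, e_{r}, e_{s})$ and $c_{k}(a_{r,s}, a_{s,r}, e_{s}, e_{r})$ from \ref{cheb}, with the indices shifting in accordance with the two-term recurrence that defines the $c_{k}$. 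Concretely, writing $a = a_{s,r}$, $b = a_{r,s}$, $e = e_{r}$, $f = e_{s}$, the identities
\[\phi_{s}(\alpha_{r}) = \alpha_{r} - b\,\alpha_{s}, \qquad \phi_{r}(\alpha_{s}) = \alpha_{s} - a\,\alpha_{r}, \qquad \phi_{s}(\alpha_{s}) = -\alpha_{s}, \qquad \phi_{r}(\alpha_{r}) = -\alpha_{r}\]
make the recurrence defining $c_{2n+2}$ and $c_{2n+1}$ fall out of the recursive step. The upshot is that $(\phi_{r}\phi_{s})^{n}(m) - m$ is a $B$-combination of $\alpha_{r}, \alpha_{s}$ whose coefficients are $\mpair{m, \ck\alpha_{r}}$ and $\mpair{m, \ck\alpha_{s}}$ times certain $c_{k}$'s, and specifically $(\phi_{r}\phi_{s})^{m_{r,s}}(m) - m$ becomes expressible purely through $c_{m_{r,s}}(a, b, e, f)$ (and its companion $c_{m_{r,s}}(b, a, f, e)$, which vanishes at the same time by the second factorization in \ref{fact}). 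The hypothesis $c_{m_{r,s}}(a_{s,r}, a_{r,s}, e_{r}, e_{s}) = 0$ therefore forces $(\phi_{r}\phi_{s})^{m_{r,s}}(m) = m$ for every $m$.

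The same argument, transposed, handles the right module: $\psi_{s}(\ck\alpha_{r}) = \ck\alpha_{r} - \ck\alpha_{s}\,a$, and one obtains
$(\psi_{r}\psi_{s})^{n}(m') - m'$ as $\ck\alpha_{r}, \ck\alpha_{s}$ times $c_{k}$-type coefficients times $\mpair{\alpha_{r}, m'}$, $\mpair{\alpha_{s}, m'}$; again the hypothesis yields the braid relation. Finally, for pairing invariance it suffices (by an induction on the length of $w \in W$) to verify $\mpair{sm, sm'} = \mpair{m, m'}$ for a single generator $s$. Expanding using $B$-bilinearity and the identity $\mpair{\alpha_{s}, \ck\alpha_{s}} = 2e_{s}$ together with $\mpair{m, \ck\alpha_{s}} \in Be_{s}$ and $\mpair{\alpha_{s}, m'} \in e_{s}B$, the four cross-terms collapse to zero, as desired.

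The main obstacle is the bookkeeping in the inductive Chebyshev computation: because $B$ is non-commutative, one must take care that the $c_{k}(a, b, e, f)$ appear on the correct side of $\mpair{m, \ck\alpha_{r}}$ or $\mpair{m, \ck\alpha_{s}}$, and that the two parallel sequences of coefficients (even and odd indices) interleave via the correct recurrence. Once this matching is pinned down, the vanishing at $n = m_{r,s}$ is immediate from the hypothesis and the defining recursion of \ref{cheb}.
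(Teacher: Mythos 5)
The paper does not contain a proof of Proposition \ref{refrep}; it is quoted from Dyer's preprint \cite{Noncom} (``The following is a main result in Dyer \cite{Noncom}''), so there is no internal proof to compare against. Your outline is the natural one: verify that $s\mapsto\phi_s$ and $s\mapsto\psi_s$ respect the Coxeter presentation (involutivity plus braid relations), then check $\mpair{sm,sm'}=\mpair{m,m'}$ on a single generator and finish by induction on $\ell(w)$. The pairing step is fine: expanding by bilinearity, the three cross-terms are $-\mpair{m,\ck\alpha_s}\mpair{\alpha_s,m'}$, $-\mpair{m,\ck\alpha_s}\mpair{\alpha_s,m'}$, and $+\mpair{m,\ck\alpha_s}(2e_s)\mpair{\alpha_s,m'}$, and the idempotent conditions $\mpair{m,\ck\alpha_s}\in Be_s$, $\mpair{\alpha_s,m'}\in e_sB$ collapse them.

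Two points deserve flagging. First, a concrete error: you justify $c_{m_{r,s}}(a_{r,s},a_{s,r},e_s,e_r)=0$ ``by the second factorization in \ref{fact}.'' That factorization expresses $c_n(a,b,e,f)$ in terms of various $C_N$'s; it does not yield $c_n(a,b,e,f)=0\Rightarrow c_n(b,a,f,e)=0$ in general (e.g.\ if $a=0$, $b\ne 0$, then $c_2(a,b,e,f)=a=0$ while $c_2(b,a,f,e)=b\ne 0$). The companion vanishes simply because the hypothesis is quantified over all ordered pairs $(r,s)\in S\times S$; applying it to $(s,r)$ and using $m_{s,r}=m_{r,s}$ gives exactly $c_{m_{r,s}}(a_{r,s},a_{s,r},e_s,e_r)=0$. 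Second, the ``bookkeeping'' is heavier than your sketch suggests: writing $(\phi_r\phi_s)^n(m)-m=-u_n\alpha_r-v_n\alpha_s$, the cumulative coefficients $u_n,v_n$ are not individual $c_k$'s but \emph{partial sums} $\sum_i c_{2i-1}(a,b,e,f)$, $\sum_i c_{2i}(a,b,e,f)$ (and companions) times $\mpair{m,\ck\alpha_r},\mpair{m,\ck\alpha_s}$. To get the vanishing at $n=m_{r,s}$ one then needs either the reflection identity $c_{m+k}=-c_{m-k}$ (valid once $c_m=0$) or the Chebyshev-type product identities $\sum_{i=1}^m c_{2i-1}=c_m^{(\prime)}c_m$, $\sum_{i=1}^m c_{2i}=c_{m}c_{m+1}$ (with side depending on parity), both of which have $c_m$ as a factor. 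These identities do hold and your plan is correct, but ``the vanishing at $n=m_{r,s}$ is immediate from the hypothesis and the defining recursion'' undersells this step; the argument really does rest on these summation identities (or the quasi-periodicity they encode), not on the defining recursion alone.
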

\subsection{}  We call the matrix $(a_{r,s})_{r,s\in S}$  a non-commutative generalized Cartan matrix (NCM) when the assumptions in \ref{refrep} hold. In that case, 
 the left $W$-action on $M$ gives a representation  $\phi\colon W\to \mathrm{Aut}_{B}(M)$,
and similarly for $\ck M$.  We call these lax reflection representations of $(W,S)$ on $M$ and $\ck M$.  If, more strongly, one has 
$C_{m_{r,s}}(a_{s,r},a_{r,s},e_{r},e_{s})=0$ for all $(r,s)\in S\times S$ with $r\neq s$ and $m_{r,s}\neq \infty$, we say these representations are strict reflection representations and say that $A$ is a strict NCM.

In either case, the subset  $\Phi:=\{w\alpha_{s}\mid w\in W,s\in S\}$ of $M$ is called the (corresponding)   root system of $(W,S)$ in $M$, and  $\ck\Phi:=\{w\ck\alpha_{s}\mid w\in W,s\in S\}$ is called the coroot system of $(W,S)$ in $\ck M$.  For some basic properties of  lax reflection representations and their root systems, see \cite{Noncom}.
 
\begin{exmp}\label{realrefrep} Suppose above that 
$B=\mathbb{R}$,
$M=V$ and $\ck M=\ck V$ are $\mathbb{R}$-vector spaces 
(unital $B$-modules) and $e_{r}=1_{B}$ for all $r\in S$. Let 
$a_{r,s}:=\mpair{\alpha_{r},\ck\alpha_{s}}$.  The proposition 
 produces lax reflection  representations 
$ W\to \mathrm{GL}_{\mathbb{R}}(V)$ and 
$ W\to \mathrm{GL}_{\mathbb{R}}(\ck V)$   provided that  $a_{s,s}=2$ for all 
$s\in S$, and, for all $r\neq s\in S$,   $a_{r,s}=0$ if $m_{r,s}=2$
 and   $a_{r,s}a_{s,r}=4\cos^{2}\frac{k_{r,s}\pi}{m_{r,s}}$ for 
 some $1\leq k_{r,s}\leq  m_{r,s}/2$ if $3\leq m_{r,s}< \infty$. 
 These reflection representations are strict if and only if   
 $k_{r,s}$ is relatively prime to $m_{r,s}$ whenever 
 $3\leq m_{r,s}< \infty$. We call the matrix $A:=(a_{r,s})_{r,s}$ a strict (resp., lax) real reflection matrix, abbreviated SRRM (resp., LRRM)  for $(W,S)$ if it arises in ths way from a strict (resp., lax) reflection representation.  \end{exmp}
 
\begin{rem*} (1) Representations as in the preceding example are mostly studied under    conditions which ensure they are strict and that the representation and its  root system have standard (convexity and positivity) properties.  Essentially the most general  such  conditions   are the following:

\begin{enumerate}
\item $a_{r,r}=2$ for all $r\in S$.
\item $a_{r,s}\leq 0$ for all $r\neq s$ in $S$.
\item $a_{r,s}=0$ if  $r,s\in S$ and $m_{r,s}=2$.
\item $a_{r,s}a_{s,r}=4\cos^{2}\frac{\pi}{m_{r,s}}$ if $r,s\in S$ and $3\leq m_{r,s}<\infty$.
\item $a_{r,s}a_{s,r}\geq 4$ if $r,s\in S$ and $ m_{r,s}=\infty$.
\item $\Pi$ and $\ck \Pi$ are positively independent.
\end{enumerate}
Here, a family $(\beta_{i})_{i\in I}$ of elements in a real vector space is said to be positively independent if $\sum_{i\in I} a_{i}\beta_{i}=0$ for  non-negative real scalars $a_{i}$, almost all zero, implies $a_{i}=0$ for all $i\in I$.  In particular, linearly independent vectors are positively independent. When (1)--(5) hold, we say that $A:=(a_{r,s})_{r,s\in S}$ is a (possibly) non-crystallographic 
generalized Cartan matrix (NGCM) for $(W,S)$. The finite NGCMs with integral entries are essentially the Generalized Cartan Matrices (GCMs) appearing in the study of Kac-Moody Lie algebras \cite{Kac} and elsewhere.  Note that NGCMs are SRRMs.  See    \cite{Paired}, \cite{FuNonortho}, \cite{Conjugacy} and \cite{Noncom} for   further details on reflection representations  attached to NGCMs. 

(2) In some very special  cases including  finite irreducible Coxeter systems, real reflection representations attached to
SRRMs may be viewed as ``Galois twists''  of those attached to NGCMs, but this is not the case in general (a precise discussion of this would  involve reflection representations defined over  subfields of $\mathbb{R}$). 

(3) For any Coxeter system $(W',S)$ for which there is a surjective group homomorphism $W\to W'$ which is the identity on $S$, a SRRM for $(W',S')$ is a 
LRRM for $(W,S)$.

\end{rem*}

The following proposition summarizes   part of the above discussion which is used  subsequently. 
\begin{prop}\label{NGCMrepsum} Let $A=(a_{r,s})_{r,s\in S}$ be a LRRM  for $(W,S)$ and $V=V_{A}$ , $\ck V=\ck V_{A}$ be  real vector spaces with bases $\{\alpha_{s}\mid s\in S\}$ and  $\{\ck\alpha_{s}\mid s\in S\}$ respectively. Then there are  $\mathbb{R}$-linear lax  reflection representations of  $W$   on $V$ and $\ck V$   such that for $s,r\in S$, one has 
$   s\alpha_{r}=\alpha_{r}-a_{r,s}\alpha_{s}$ and 
$   s\ck\alpha_{r}=\ck\alpha_{r}-a_{s,r}\ck\alpha_{s}$.
We call  $(V_{A},\ck V_{A})$  the standard   paired (lax) real reflection representations of $(W,S)$  with LRRM  $A$.
If $A$ is a SRRM, these representations are strict. \end{prop}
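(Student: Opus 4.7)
The plan is to invoke Proposition \ref{refrep} with coefficient ring and algebra both equal to $\mathbb{R}$, taking $e_{s}=1$ for all $s\in S$ and defining the $\mathbb{R}$-bilinear pairing $\mpair{-,-}\colon V_{A}\times \ck V_{A}\to \mathbb{R}$ on basis elements by $\mpair{\alpha_{r},\ck\alpha_{s}}=a_{r,s}$, extended bilinearly. The setup conditions from \ref{refdatum} are then immediate: $e_{s}^{2}=e_{s}$, $e_{s}\alpha_{s}=\alpha_{s}$, and $\ck\alpha_{s}e_{s}=\ck\alpha_{s}$ are trivial, while $\mpair{\alpha_{s},\ck\alpha_{s}}=a_{s,s}=2=2e_{s}$ follows from the LRRM requirement $a_{s,s}=2$. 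The substantive hypothesis to verify is the Chebyshev identity $c_{m_{r,s}}(a_{s,r},a_{r,s},1,1)=0$ for all $r\neq s$ in $S$ with $m_{r,s}<\infty$.

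Since $\mathbb{R}$ is commutative and the idempotents are all $1$, each factor $C_{N}(a_{s,r},a_{r,s},1,1)$ with $N\geq 3$ in the factorization of \ref{fact} collapses to the scalar $C_{N}(a_{r,s}a_{s,r})$, where $C_{N}(t)\in\mathbb{Z}[t]$ is the minimal polynomial of $4\cos^{2}(\pi/N)$. The LRRM hypothesis from Example \ref{realrefrep} supplies $a_{r,s}a_{s,r}=4\cos^{2}(k\pi/m)$ for some $1\leq k\leq m/2$ with $m=m_{r,s}$; writing $k/m$ in lowest terms as $k'/m'$, the value $4\cos^{2}(k'\pi/m')$ is a Galois conjugate of $4\cos^{2}(\pi/m')$ and hence a root of $C_{m'}(t)$. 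When $m'\geq 3$, this yields $C_{m'}(a_{s,r},a_{r,s},1,1)=0$; since $m'\mid m$ and $m'\geq 2$, this factor appears in $c_{m}$ by \ref{fact}, so $c_{m_{r,s}}(a_{s,r},a_{r,s},1,1)=0$. In the corner case $k=m/2$ (so $m'=2$ and $a_{r,s}a_{s,r}=0$), the vanishing is instead extracted from the factor $C_{2}(a_{s,r},a_{r,s},1,1)=a_{s,r}$, using that $\mathbb{R}$ is a domain and that the symmetric identity for the reverse pair $(s,r)$ forces both $a_{r,s}$ and $a_{s,r}$ to vanish.

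Once the polynomial identity is in hand, Proposition \ref{refrep} yields the left $W$-actions on $V_{A}$ and $\ck V_{A}$; substituting $m=\alpha_{r}$ into $sm=m-\mpair{m,\ck\alpha_{s}}\alpha_{s}$ gives $s\alpha_{r}=\alpha_{r}-a_{r,s}\alpha_{s}$, and substituting $m'=\ck\alpha_{r}$ into $sm'=m'-\ck\alpha_{s}\mpair{\alpha_{s},m'}$ gives $s\ck\alpha_{r}=\ck\alpha_{r}-a_{s,r}\ck\alpha_{s}$ (the latter using that $\mathbb{R}$-scalars commute past vectors). For the strict assertion, if $A$ is an SRRM then $\gcd(k,m)=1$, forcing $m'=m$ in the analysis above, so $C_{m}(a_{s,r},a_{r,s},1,1)=0$ holds directly and supplies the strict NCM condition, yielding a strict reflection representation. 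The main obstacle I anticipate is the second step: isolating the correct $C_{N}$-factor of $c_{m}$ that vanishes requires the Galois-conjugate observation for $4\cos^{2}(k'\pi/m')$, together with careful treatment of the $m'=2$ boundary case where the vanishing must be forced from the $C_{2}$ factor rather than a higher cyclotomic one.
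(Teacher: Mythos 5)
Your proposal takes essentially the same route as the paper, which in fact gives no freestanding proof for Proposition~\ref{NGCMrepsum} but instead remarks that it ``summarizes part of the above discussion,'' meaning Example~\ref{realrefrep}: set $B=\mathbb{R}$, $e_s=1_\mathbb{R}$, $\mpair{\alpha_r,\ck\alpha_s}=a_{r,s}$, verify the conditions of \ref{refdatum}, and invoke Proposition~\ref{refrep}. You correctly identify the nontrivial step as the verification of the Chebyshev hypothesis $c_{m_{r,s}}(a_{s,r},a_{r,s},1,1)=0$, and your Galois-conjugate argument (that $4\cos^{2}(k'\pi/m')$ is a conjugate of $4\cos^{2}(\pi/m')$ when $\gcd(k',m')=1$, hence a root of $C_{m'}(t)$, and $C_{m'}$ appears as a factor of $c_{m}$ by \ref{fact} since $m'\mid m$) is precisely the calculation the paper makes explicit later in the proof of Proposition~\ref{homsm}. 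Your reading of the action formulas and of the strict case (where coprimality of $k$ and $m$ gives $m'=m$, so the minimal-polynomial factor $C_{m}$ itself vanishes) is also correct.

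The one place where your argument needs tightening is the $m'=2$ corner case. You write that ``the symmetric identity for the reverse pair $(s,r)$ forces both $a_{r,s}$ and $a_{s,r}$ to vanish,'' but if you are trying to \emph{derive} the Chebyshev identities from the product condition $a_{r,s}a_{s,r}=0$ alone, this is circular: the symmetric identity $c_{m}(a_{r,s},a_{s,r},1,1)=0$ is exactly one of the things being established, not a given. Concretely, if $m$ is even, $a_{r,s}=0$ and $a_{s,r}\neq 0$ satisfies all the displayed LRRM product constraints, yet $c_{m}(a_{s,r},a_{r,s},1,1)=a_{s,r}\cdot p(0)\neq 0$ because the constant terms $C_{N}(0)$ for $N\geq 3$ are nonzero. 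The intended resolution is that ``LRRM'' is defined in Example~\ref{realrefrep} as a matrix that \emph{arises from} a lax reflection representation; under that reading, the Chebyshev identities for \emph{both} ordered pairs $(r,s)$ and $(s,r)$ are given by definition, and the $m'=2$ case is dispatched immediately without the domain-plus-symmetry maneuver. Your argument is salvaged by this observation, and indeed the paper's own proof of Proposition~\ref{homsm} handles the $n=2$ case with the same informality; but as written, your $m'=2$ paragraph reads as if it were deriving the vanishing from the product conditions, which it cannot do.

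In short: correct approach, correct computations, correct formula verification, correct strict case; the only genuine gap is the slightly circular treatment of $m'=2$, which should be replaced by a direct appeal to the fact that the LRRM definition already supplies $c_{m_{r,s}}(a_{s,r},a_{r,s},1,1)=0$ for all finite $m_{r,s}$ (including the $a_{r,s}a_{s,r}=0$ case), at which point the whole factor-by-factor analysis becomes merely explanatory rather than logically load-bearing.
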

\subsection{} 
 Let $A$ be a LRRM for $(W,S)$ and let  $(V,\ck V)=(V_{A},\ck V_{A})$ be  the   standard paired real reflection representations of $(W,S)$  with 
 LRRM $A$. As constructed above, $V$ and $\ck V$ have left 
 $W$-actions, but we shall find it convenient to regard $W$ as 
 instead acting on the right of $V$, by defining $vw:=w^{-1}v$ for 
 $v\in V$ and $w\in W$. We still regard the $W$-action on 
 $\ck V$ as a left action.
 Then $\ck V\otimes_{\mathbb{R}} V$  has natural commuting left and right $W$-actions, defined by
 \[ w(v'\otimes v)=(wv')\otimes v,\qquad  (v'\otimes v)w=v'\otimes (vw), \qquad \text{\rm if $v'\in \ck V$, $v\in V$, $w\in W$.} \] 
 One has
 \[ s(\ck\alpha_{r}\otimes \alpha_{t})=
 \ck\alpha_{r}\otimes \alpha_{t}-a_{s,r}\ck\alpha_{s}\otimes\alpha_{t},\qquad  (\ck\alpha_{r}\otimes \alpha_{t})s=
 \ck\alpha_{r}\otimes \alpha_{t}-a_{t,s}\ck\alpha_{r}\otimes\alpha_{s},\qquad \text{\rm for $r,s,t\in S$.}\]

\subsection{}   Let $B:=\mathrm{Mat}'_{S\times S}(\mathbb{R})$ denote the $\mathbb{R}$-algebra  of real $S\times S$-matrices
with only finitely many non-zero entries. Note that $B$ has a $\mathbb{R}$-basis $(e_{r,s})_{r,s\in S}$ where  $e_{r,s}$ is the matrix unit (the matrix with $(r',s')$-entry.$\delta_{r,r'}\delta_{s,s'}$ where $\delta_{j,k}$ denotes the Kronecker delta in $\mathbb{R}$). 

 We shall also regard $B$ as a $(B,B)$-bimodule in the natural way.  Identify
$\ck V\otimes_{\mathbb{R}}V=B$ as $\mathbb{R}$-vector space so that $\ck \alpha_{r}\otimes \alpha_{s}=e_{r,s}$. This endows $B$ with structure of $\mathbb{R}$-algebra and with  left and right $W$-actions which  commute, such that the left (resp., right) $W$-action is right (resp., left) $B$-linear.

\subsection{} Consider any ring  ${R'}:=P_{W,S}/I$ where $I$ is a two-sided ideal of $P_{W,S}$ such that $\widetilde{I}_{W,S}\subseteq I$. The two cases of greatest interest are  (1) $I=I_{W,S}$ and ${R'}=R_{W,S}$, and (2)  $I=\widetilde{I}_{W,S}$ and ${R'}=\widetilde R_{W,S}$. For any element $x$ of 
$P_{W,S}$, we denote its image in ${R'}$ under the canonical surjection $P_{W,S}\to {R'}$ still by $x$. In particular, this defines
$[x_{1}\ldots x_{n}]\in {R'}$ if $n>0$ and  $x_{1},\ldots, x_{n}\in S$ are the successive vertices of a path in $\Gamma_{W,S}$. We also set
$[x_{1}\ldots x_{n}]:=0_{{R'}}$ if $n>0$ and  $x_{1},\ldots, x_{n}\in S$ are not the successive vertices of a path in $\Gamma_{W,S}$ (that is, if for some $1\leq i\leq n-1$,  $m_{x_{i},x_{i+1}}\leq 2$). Note that the elements $[s]$ for $s\in S$ are pairwise orthogonal idempotents in ${R'}$ such that ${R'}=\bigoplus_{r,s\in S}[r]{R'}[s]$ as abelian group.
\begin{prop} Define the quotient ring ${R'}=P_{W,S}/I$ where $I\supseteq \widetilde I_{W,S}$ is a two-sided ideal of $P_{W,S}$. Let $M$ (resp., $\ck M$) denote the left  regular ${R'}$-module ${R'}$. 
\begin{enumerate}\item There is  a (unique)
left $R'$-linear lax refelction representation of $W$ on $M$ such that  
\[   s[r]=\begin{cases} - [s],&\text{\rm if $r=s$}\\ 
  [r]+[rs], &\text{\rm if $r\neq s$.}
\end{cases} \qquad \text{\rm for $r,s\in S$.}\] and a (unique) right 
$R'$-linear lax reflection representation of $W$ on $\ck M$ such that 
\[  s [r]=\begin{cases} - [s],&\text{\rm if $r=s$}\\ 
  [r]+[sr], &\text{\rm if $r\neq s$.}
\end{cases}\qquad \text{\rm for $r,s\in S$.}\]
\item Base change $-\otimes_{\mathbb{Z}}\mathbb{R}$ gives a (possibly non-unital)  $\mathbb{R}$-algebra $R'\otimes_{\mathbb{Z}}\mathbb{R}$ and left (resp., right)
$R'\otimes_{\mathbb{Z}}\mathbb{R}$-linear lax left reflection actions of $W$ on the left and right regular   $R'\otimes_{\mathbb{Z}}\mathbb{R}$-modules $M\otimes_{\mathbb{Z}}\mathbb{R}$ and 
$\ck M\otimes_{\mathbb{Z}}\mathbb{R}$ respectively.
\item If $I\supseteq \widetilde I_{W,S}$, then the lax reflection representations in (1)--(2) are strict.
\end{enumerate}  
\end{prop}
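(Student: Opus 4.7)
The plan is to apply Dyer's Proposition \ref{refrep} with a natural non-commutative Cartan matrix structure on $B = R'$.  For part (1), to construct the left $R'$-linear action on $M = R'$, I take $e_s = \alpha_s = [s]$ for $s \in S$ and a bilinear pairing on $M \times \ck M$ specified on generators by
\[ \mpair{[r], \ck\alpha_s} = \begin{cases} 2[s] & \text{if } r = s, \\ -[rs] & \text{if } r \neq s \text{ and } m_{r,s} \geq 3, \\ 0 & \text{otherwise,} \end{cases} \]
extended left $R'$-linearly in the first argument.  When $S$ is finite this corresponds to taking $\ck M = R'$ (right regular), $\mpair{a,b} = ab$, and $\ck\alpha_s = 2[s] - \sum_{r\neq s,\,m_{rs}\geq 3}[rs]$; for infinite $S$ the same formula still defines the relevant $R'$-valued linear functional on $M$, which is all that the action requires.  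The preconditions of \ref{refdatum} ($e_s^2 = e_s$, $e_s\alpha_s = \alpha_s$, $\ck\alpha_s e_s = \ck\alpha_s$, $\mpair{\alpha_s,\ck\alpha_s}=2e_s$) are immediate from the orthogonal idempotent structure of $R'$.

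The decisive step is verifying the NCM identity $c_{m_{r,s}}(a_{s,r}, a_{r,s}, e_r, e_s) = 0$ in $R'$ for each pair $r \neq s$ with $m_{r,s} < \infty$, where the Cartan entries come out as $a_{s,r} = -[sr]$ and $a_{r,s} = -[rs]$.  An easy induction on the recurrence defining $c_n$ in \ref{cheb} shows that the substitution $(a,b) \mapsto (-a,-b)$ introduces only a sign depending on the parity of $n$.  Hence $c_{m_{r,s}}(-[sr],-[rs],[r],[s]) = \pm c_{m_{r,s}}([sr],[rs],[r],[s])$, and the element on the right is precisely the generator of $\widetilde I_{W,S}$ indexed by the pair $(s,r)$ in \ref{noncomrep}.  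Since $\widetilde I_{W,S} \subseteq I$ by hypothesis, this element vanishes in $R'$, and Proposition \ref{refrep} applies.  A direct computation $s[r] = [r] - \mpair{[r],\ck\alpha_s}\alpha_s$ then recovers the claimed formulas; uniqueness is automatic since any left $R'$-linear action is determined by its values on the generating set $\{[r]: r \in S\}$ of $M = \bigoplus_{r} R'[r]$.

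The analogous construction for the right regular module $\ck M$ is obtained by swapping roles: take $\ck\alpha_s = [s]$ and $\alpha_s = 2[s] - \sum_{r\neq s,\,m_{rs}\geq 3}[sr]$, yielding the same NCM matrix (so the same verification of the Chebyshev identity) and formulas with $[sr]$ in place of $[rs]$.  Part (2) is essentially immediate: since $W$ acts by $R'$-linear (hence $\mathbb{Z}$-linear) operators, base change $- \otimes_{\mathbb{Z}} \mathbb{R}$ carries the structure over to the $\mathbb{R}$-algebra $R' \otimes_{\mathbb{Z}} \mathbb{R}$ and its left and right regular modules, preserving the formulas.  For part (3), reading the hypothesis as $I \supseteq I_{W,S}$ (the statement as given appears to have a typo), one instead checks $C_{m_{r,s}}(a_{s,r},a_{r,s},e_r,e_s) = 0$; for $m \geq 3$ the element $C_m(a,b,e,f)$ is by \ref{cheb} a polynomial over $\mathbb{Z}$ in $ba$, and $(-b)(-a) = ba$, so $C_m$ is invariant under $(a,b) \mapsto (-a,-b)$.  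Thus $C_{m_{r,s}}(-[sr],-[rs],[r],[s])$ equals the corresponding generator of $I_{W,S} \subseteq I$ and vanishes in $R'$.

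The main obstacle is handling the infinite rank case cleanly, since the natural choice of $\ck\alpha_s$ as an infinite formal sum need not lie in $R'$.  The cleanest resolution is the observation above: the definition of the action on $M$ only requires the $R'$-linear functional $\mpair{-,\ck\alpha_s}: M \to R'$, which takes a single-term value on each generator $[r]$ and so is unambiguously defined, independently of whether $\ck\alpha_s$ itself can be realized as an element of $R'$.  Alternatively one can invoke Lemmas \ref{impar} and \ref{dirlim} to reduce to the finite rank standard parabolics and take direct limits, at the cost of some bookkeeping about compatibility of the actions with the structure maps $\tilde i_{J_1, J_2}$.
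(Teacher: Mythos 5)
Your core strategy — instantiate Proposition~\ref{refrep} with $e_s=\alpha_s=[s]$ and a bilinear pairing designed to make $a_{r,s}=-[rs]$ — is exactly the paper's, and your verification of the Chebyshev hypotheses is sound: the sign count $c_n(-a,-b,e,f)=(-1)^{n-1}c_n(a,b,e,f)$ and the observation that $C_m$ for $m\ge 3$ is a polynomial in $ba$, hence invariant under $(a,b)\mapsto(-a,-b)$, both hold, and together with $\widetilde I_{W,S}\subseteq I$ (resp.\ $I_{W,S}\subseteq I$) they give the NCM (resp.\ strict NCM) conditions. The uniqueness argument and the base-change argument for part~(2) are also fine, and you correctly spot that part~(3) should read $I\supseteq I_{W,S}$.

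Where you go astray is in never committing to what $\ck\alpha_s$ is. You open by announcing ``a bilinear pairing on $M\times\ck M$ specified on generators,'' which is the right move, but then immediately re-interpret it as the multiplication pairing $\mpair{a,b}=ab$, which forces $\ck\alpha_s$ to be the (possibly infinite) sum $2[s]-\sum_{r\ne s, m_{rs}\ge 3}[rs]$, and you then spend your final paragraph inventing a workaround (``only the functional matters'') to handle the fact that this sum may not lie in $R'$. That workaround is not actually compatible with Proposition~\ref{refrep} as stated, which requires honest elements $\ck\alpha_s\in\ck M$. The paper dissolves the whole ``obstacle'' by simply setting $\ck\alpha_s:=[s]$ and \emph{not} taking the pairing to be multiplication: since a bilinear map $M\times\ck M\to R'$ (left $R'$-linear in the first slot, right $R'$-linear in the second) is uniquely determined by the values $\mpair{[r],[s]}$, one may freely decree $\mpair{[r],[s]}=2[r]$ if $r=s$ and $-[rs]$ if $r\ne s$. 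With that choice all the hypotheses of \ref{refdatum} are trivially satisfied and nothing in the argument depends on $S$ being finite, so no direct-limit bookkeeping is needed either. Replace your construction of $\ck\alpha_s$ with this one and the rest of your argument stands unchanged.
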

\begin{proof} We prove (1).
 Define families $(e_{s})_{s\in S}$ in ${R'}$, $(\alpha_{s})_{s\in S}$ in $M$ and $(\ck\alpha_{s})_{s\in S}$ in $\ck M$ by setting $\alpha_{s}=\ck\alpha_{s}:=[s]$ for all $s\in S$.   A ${R'}$-bilinear map $\mpair{-,-}\colon M\times \ck M\to {R'}$ is uniquely determined by the values  $\mpair{[r],[s]}$ for $r,s\in S$, which may be arbitrarily assigned subject only to the conditions that 
$\mpair{[r],[s]}\in e_{r}{R'}e_{s}$ for all $r,s$. Hence there is a
unique ${R'}$-bilinear map $\mpair{-,-}\colon M\times \ck M\to {R'}$
such that \begin{equation*}
\mpair{\alpha_{r},\ck\alpha_{s}}=\begin{cases}2[r],&\text{\rm if $r=s$}\\
-[rs],&\text{\rm if $r\neq s$.}
\end{cases}
\end{equation*}
Note that $\mpair{\alpha_{r},\ck\alpha_{s}} = 0$ if $m_{rs} = 2$ by our conventions. 
The conditions of Proposition \ref{refrep} are satisfied, by definition of $\widetilde{I}_{W,S}$ and the assumption that $\widetilde{I}_{W,S}\subseteq I$. Hence  the proposition gives lax representations as required in (1). Part (2) follows easily by base change and (3) is immediate from the definitions. \end{proof}
\subsection{} Regard the left $R'$-linear $W$-action just defined
 on $M$ as a right action by setting $mw:=w^{-1}m$ for $w\in W$ and $m\in M$,  and  henceforward write $M$ and $\ck M$  just as $R'$. Then $R'$ admits a right $R'$-linear left reflection  action of $W$,  and a left $R'$-linear   right reflection action of $W$.
 Moreover, these actions commute, as one can see by direct computation or from the following lemma.
  \begin{lemma} Let $C$ be any idempotent ring (i.e. $C^{2}=C$).
 Let $\theta\colon C\to C$ (resp., $\tau\colon C\to C$) be an endomorphism of the left (right) regular $C$ module. Then $\tau$ and $\theta$ commute in $\mathrm{End}_{\mathbb{Z}}(C)$.  \end{lemma}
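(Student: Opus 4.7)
The plan is to unpack the module-theoretic conditions on $\theta$ and $\tau$, perform the computation on elements of the form $ab$ with $a,b\in C$, and then use the hypothesis $C^{2}=C$ to conclude.

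First I would translate the hypotheses into concrete identities: saying that $\theta$ is an endomorphism of the left regular $C$-module means $\theta$ is an additive map $C\to C$ satisfying $\theta(cx)=c\,\theta(x)$ for all $c,x\in C$, and similarly $\tau$ is an additive map satisfying $\tau(xc)=\tau(x)c$ for all $x,c\in C$.

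Next I would check commutativity on a generic product. For any $a,b\in C$, viewing $ab$ as the right action of $b$ on $a$ and applying right $C$-linearity of $\tau$ gives $\tau(ab)=\tau(a)b$; then applying $\theta$, and using left $C$-linearity of $\theta$ with the scalar $\tau(a)\in C$ acting on $b$, yields
\[
\theta\tau(ab)=\theta\bigl(\tau(a)\cdot b\bigr)=\tau(a)\cdot\theta(b).
\]
Running the computation in the opposite order, left $C$-linearity of $\theta$ gives $\theta(ab)=a\,\theta(b)$, and then right $C$-linearity of $\tau$ with scalar $\theta(b)$ gives
\[
\tau\theta(ab)=\tau\bigl(a\cdot\theta(b)\bigr)=\tau(a)\cdot\theta(b).
\]
Thus $\theta\tau$ and $\tau\theta$ agree on every element of the form $ab$.

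Finally, the hypothesis $C^{2}=C$ means every element of $C$ is a finite $\mathbb{Z}$-linear combination of elements of the form $ab$ with $a,b\in C$ (in fact a finite sum suffices). Since $\theta\tau$ and $\tau\theta$ are both additive and agree on such products, they agree on all of $C$, proving $\theta\tau=\tau\theta$ in $\mathrm{End}_{\mathbb{Z}}(C)$. There is no real obstacle here; the one point that needs care is simply recognizing that in a non-unital ring the reduction to products requires the idempotency hypothesis $C^{2}=C$, which is exactly what the statement provides.
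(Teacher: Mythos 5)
Your proof is correct and follows essentially the same route as the paper's: write an arbitrary element of $C$ as a finite sum of products (possible by $C^{2}=C$), push $\theta$ and $\tau$ through using left and right $C$-linearity respectively to show both compositions send $ab$ to $\tau(a)\theta(b)$, and conclude by additivity.
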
 
 \begin{proof}
  Let $c\in C$. Since $c\in C=C^{2}$, we may write
  $c=\sum_{i=1}^{n}c_{i}d_{i}$ for some $n\in \mathbb{N}$ and $c_{i},d_{i}\in C$. Then
  \begin{equation*}
  \tau(\theta(c))
 =\tau(\theta(\sum_{i=1}^{n}c_{i}d_{i}))=
  \tau(\sum_{i=1}^{n}c_{i}\theta(d_{i}))=
   \sum_{i=1}^{n}\tau(c_{i})\theta(d_{i})=\theta( \sum_{i=1}^{n}\tau(c_{i})d_{i})=\theta(\tau(\sum_{i=1}^{n}c_{i}d_{i})=\theta(\tau(c)).\qedhere
  \end{equation*}  
 \end{proof}
 \subsection{} Let $A:=(a_{r,s})_{r,s\in S}$ be an $S\times S$-indexed family in $\mathbb{R}$.  Then there is a unique ring  homomorphism $\theta'_{A}\colon P_{W,S}\to B$
 such that for any path in $\Gamma_{W,S}$ with successive vertices $s_{1},\ldots, s_{n}$, one has 
$\theta'_{A}([s_{1}\cdots s_{n}])=(-1)^{n-1}a_{s_{1},s_{2}}\cdots a_{s_{n-1},s_{n}}e_{s_{1},s_{n}}$ (where here and below, $a_{s_{1},s_{2}}\cdots a_{s_{n-1},s_{n}}=1_{\mathbb{R}}$ by convention if $n=1$). Note that $\theta'_{A}$ doesn't depend on the values $a_{r,s}$ with $m_{r,s}\leq 2$. 

  \begin{prop} Let $A$ and $\theta'_{A}$ be as above. \label{homsm} 
\begin{enumerate}\item If $A$ is a LRRM for $(W,S)$,  then $\theta'_{A}$ factors as  $ {P}_{W,S}\twoheadrightarrow \widetilde  R_{W,S}\xrightarrow{\theta_{A,\widetilde  R}} B$ for a unique ring homomoprhism $\theta_{A,\widetilde  R}$.
\item If $A$ is a SRRM for $(W,S)$, then $\theta_{A,\widetilde{R}}$ factors as 
$\widetilde {R}_{W,S}\twoheadrightarrow R_{W,S}\xrightarrow{\theta_{A,R}} B$ for  a unique ring homomoprhism $\theta_{A, R}$.
\item Suppose $A$ is a LRRM. Let $(S_{i})_{i\in I}$ be the equivalence classes for the finest equivalence  relation $\sim$ on $S$ such that $r\sim s$ if $a_{r,s}\neq 0$. 
Then $\span(\mathrm{Img}(\theta_{A,\widetilde R}))=\bigoplus_{i\in I}
 \mathrm{Mat}'_{S_{i}\times S_{i}}(\mathbb{R})\subseteq B$. 
\item If  $A$ is a SRRM, then 
$\span(\mathrm{Img}(\theta_{A,{R'}}))= 
\bigoplus_{i\in I}\mathrm{Mat}'_{S_{i}\times S_{i}}(\mathbb{R})$ where $(W_{i},S_{i})_{i\in I}$ are the irreducible compoenents of the Coxeter system $(W,S)$.
\end{enumerate}\end{prop}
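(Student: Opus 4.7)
The plan is to prove (1) and (2) by direct computation showing that $\theta'_A$ annihilates the generators of $\widetilde I_{W,S}$ and $I_{W,S}$ respectively, and then to derive (3) and (4) from the explicit formula for $\theta'_A$ on the basis of $\widetilde R_{W,S}$ provided by Theorem~\ref{basispar}.

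For (1), write $\alpha:=\theta'_A([rs])=-a_{r,s}e_{r,s}$ and $\beta:=\theta'_A([sr])=-a_{s,r}e_{s,r}$, so that $\beta\alpha=a_{r,s}a_{s,r}\,e_{s,s}$. Since $\theta'_A$ is a ring homomorphism, the image of the generator $c_{m_{r,s}}([rs],[sr],[s],[r])$ equals $c_{m_{r,s}}(\alpha,\beta,e_{s,s},e_{r,r})$, computed inside the $2\times 2$ corner of $B$ indexed by $\{r,s\}$. The factorization $c_n(a,b,e,f)=\prod_{N\mid n,\,N\geq 2}C'_N$ of \ref{fact} (with $C_2$ leftmost when $n$ is even), combined with the identity $C_N(\alpha,\beta,e_{s,s},e_{r,r})=C_N(a_{r,s}a_{s,r})\,e_{s,s}$ for $N\geq 3$ (where $C_N(t)\in\mathbb{Z}[t]$ is evaluated at the scalar $a_{r,s}a_{s,r}$), reduces matters to verifying that the real number $t:=a_{r,s}a_{s,r}=4\cos^2(k_{r,s}\pi/m_{r,s})$ is a root of $c_{m_{r,s}}(t)$. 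Writing $d:=\gcd(k_{r,s},m_{r,s})$ and $N:=m_{r,s}/d$, one has $t=4\cos^2((k_{r,s}/d)\pi/N)$ with $\gcd(k_{r,s}/d,N)=1$, so $t$ is a Galois conjugate of $4\cos^2(\pi/N)$ and hence a root of $C_N(t)$, which divides $c_{m_{r,s}}(t)$ since $N\mid m_{r,s}$. For (2), the SRRM hypothesis that $k_{r,s}$ is coprime to $m_{r,s}$ forces $t$ to be a root of $C_{m_{r,s}}(t)$ itself (and rules out $t=0$), so the same evaluation gives $\theta_{A,\widetilde R}(C_{m_{r,s}}([rs],[sr],[s],[r]))=0$, and the factorization through $R_{W,S}$ follows.

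For (3), Theorem~\ref{basispar} allows us to describe $\theta_{A,\widetilde R}$ on each basis element: $[s_1\cdots s_n]\mapsto (-1)^{n-1}a_{s_1,s_2}\cdots a_{s_{n-1},s_n}\,e_{s_1,s_n}$, a scalar multiple of the matrix unit $e_{s_1,s_n}$. Thus $\mathrm{span}(\mathrm{Img}(\theta_{A,\widetilde R}))$ is the $\mathbb{R}$-span of those $e_{p,q}$ attainable by some path $p=s_1,\ldots,s_n=q$ in $\Gamma_{W,S}$ with all $a_{s_i,s_{i+1}}\neq 0$, which immediately yields $\subseteq\bigoplus_i\mathrm{Mat}'_{S_i\times S_i}(\mathbb{R})$. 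For the reverse inclusion, chain the defining relations of $\sim$ into explicit paths and use that the span is an $\mathbb{R}$-subalgebra of $B$ closed under $e_{p,q}e_{q,r}=e_{p,r}$ to compose the pieces from successive steps into the required $e_{p,q}$ for arbitrary $p,q\in S_i$. Finally, (4) follows from (3) by observing that under SRRM, $k_{r,s}$ coprime to $m_{r,s}$ forces $t>0$ hence both $a_{r,s}$ and $a_{s,r}$ nonzero whenever $3\leq m_{r,s}<\infty$, while $a_{r,s}=0=a_{s,r}$ for $m_{r,s}=2$; thus $\sim$ is exactly vertex adjacency in $\Gamma_{W,S}$, its classes $S_i$ are the connected components of $\Gamma_{W,S}$, and these are the irreducible components of $(W,S)$.

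The main technical obstacle is the boundary case $t=0$ in (1), occurring when $m_{r,s}$ is even and $k_{r,s}=m_{r,s}/2$: here one uses that $C_N(0)\neq 0$ for every $N\geq 3$ (since $0$ is a simple root of $c_n(t)$, arising only from the $C_2(t)=t$ factor) together with the LRRM condition applied in both orderings $(r,s)$ and $(s,r)$ to deduce $a_{r,s}=a_{s,r}=0$, so that $\alpha=\beta=0$ and the product vanishes trivially. A secondary subtlety in (3) is that when $m_{r,s}=\infty$ the LRRM hypothesis permits asymmetric vanishing of $a_{r,s}$ versus $a_{s,r}$, so constructing paths realizing every required matrix unit requires careful use of the algebra-closure of $\mathrm{span}(\mathrm{Img}(\theta_{A,\widetilde R}))$ to combine the available pieces.
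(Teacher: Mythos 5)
Your proof of (1)--(2) follows the paper's route exactly: show that $\theta'_A$ annihilates the generators of $\widetilde I_{W,S}$ (resp.\ $I_{W,S}$) by reducing, via the factorization of \ref{fact}, to evaluating the polynomials $C_N(t)$ at the scalar $a_{r,s}a_{s,r}$; your passage to $d=\gcd(k_{r,s},m_{r,s})$, $N=m_{r,s}/d$ is the same normalization the paper performs by writing $a_{r,s}a_{s,r}=4\cos^2(k\pi/n)$ with $\gcd(k,n)=1$ and $n\mid m$. You correctly flag the $t=0$ boundary case ($m$ even, $k=m/2$), where $N=2$ and $C_2$ is not the evaluation of a univariate polynomial, so the generic argument does not apply; the paper glosses this over. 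But your phrase ``LRRM condition applied in both orderings'' does not stand on its own. What is actually needed is that, since $A$ arises from a lax reflection representation, Proposition \ref{refrep} forces $c_m(a_{s,r},a_{r,s},1,1)=0$ for both ordered pairs $(r,s)$ and $(s,r)$; for $m$ even these read $a_{s,r}\prod_{N\mid m,\,N\geq 3}C_N(0)=0$ and $a_{r,s}\prod_{N\mid m,\,N\geq 3}C_N(0)=0$, and since every $C_N$ with $N\geq 3$ has only positive real roots, the products of the $C_N(0)$ are nonzero, forcing $a_{r,s}=a_{s,r}=0$. Spell that out.

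The genuine gap is in (3). Your plan to repair asymmetric vanishing along $\infty$-edges by ``careful use of the algebra-closure'' cannot succeed. Take $S=\{r,s\}$, $m_{r,s}=\infty$, $a_{r,s}=1$, $a_{s,r}=0$: this is an LRRM, since Proposition \ref{refrep} imposes no condition on pairs with $m=\infty$. The image of $\theta'_A$ spans exactly $\{e_{r,r},e_{s,s},e_{r,s}\}$, a proper upper-triangular subalgebra of $\mathrm{Mat}_{2\times 2}(\mathbb{R})$ from which no composition $e_{p,q}e_{q,r}=e_{p,r}$ produces $e_{s,r}$; yet $r\sim s$, so the asserted right-hand side of (3) is all of $\mathrm{Mat}_{2\times 2}(\mathbb{R})$. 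The reverse inclusion fails. Note that the paper's own proof of (3) rests on the unproved claim that $a_{t,u}\neq 0\iff a_{u,t}\neq 0$ for LRRMs; this is correct when $m_{t,u}<\infty$ (both vanish together precisely when $a_{t,u}a_{u,t}=0$) but fails in general at $m_{t,u}=\infty$, so the gap is inherited from the source — either (3) needs an additional hypothesis or the $\infty$-edges require separate treatment. Likewise, your argument for (4) tacitly asserts $a_{r,s}\neq 0$ whenever $m_{r,s}=\infty$ (``$\sim$ is exactly vertex adjacency in $\Gamma_{W,S}$''), which the SRRM condition alone does not provide; you do not flag this, although it is the same issue.
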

\begin{proof}  We prove (1). Let  $A$ be  a LRRM. Since $\widetilde R_{W,S} =P_{W,S}/  \widetilde  I_{W,S}$, it will suffice to show that $\ker \theta'_{A}\supseteq \widetilde  I_{W,S}$. Recall that $\widetilde  I_{W,S}$ is generated by elements $c_{m}([rs],[sr],[s],[r])\in P_{W,S}$ where   $r,s\in S$ with $3\leq m:=m_{r,s}<\infty$. 
Let $x:=e_{r,r}\in B$ if $m$ is odd and $x:=e_{s,r}\in B$
if $m$ is even. It is easily seen that
\[\begin{split}\theta'_{A,}(c_{m,P}([rs],[sr],[s],[r])&=c_{m,B}(-a_{r,s}e_{r,s},-a_{sr}e_{s,r},e_{s,s},e_{r,r})=c_{m,\mathbb{R}}(-a_{r,s},-a_{sr},1_{\mathbb{R}},{1}_{\mathbb{R}})x\end{split}.\]
Since $A$ is a LRRM, we may write 
 $a_{r,s}a_{s,r}=4\cos^{2}\frac{k\pi}{n}$ where $k\in \mathbb{N}$ and $n\in \mathbb{N}_{\geq 2}$ are relatively prime, $n\vert m$ and $1\leq k<m$. 
 We   have $C_{n,\mathbb{R}}(-a_{r,s},-a_{sr},1_{\mathbb{R}},{1}_{\mathbb{R}})=C_{n}\left(4\cos^{2}\frac{k\pi}{n}\right)=0$. So  $c_{m,-\mathbb{R}}(-a_{r,s},-a_{sr},1_{\mathbb{R}},{1}_{\mathbb{R}})=0$ by \ref{fact} and hence  $\theta'_{A,}(c_{m,P}([rs],[sr],[s],[r])=0$ as required. 
 
  The proof of (2) is similar but simpler. Let  $A$ be  a SRRM. Since $R_{W,S} = P_{W,S}/  I_{W,S}$, it will suffice to show that $\ker \theta'_{A}\supseteq  I_{W,S}$. Recall that $ I_{W,S}$ is generated by elements $C_{m}([rs],[sr],[s],[r])\in P_{W,S}$ where   $r,s\in S$ with $3\leq m:=m_{r,s}<\infty$. 
Since $\varphi(m)$ is even, we have\[\begin{split}\theta'_{A,}(C_{m,P}([rs],[sr],[s],[r])&=C_{m,B}(-a_{r,s}e_{r,s},-a_{sr}e_{s,r},e_{s,s},e_{r,r})=c_{m,\mathbb{R}}(-a_{r,s},-a_{sr},1_{\mathbb{R}},{1}_{\mathbb{R}})e_{r,r}\end{split}.\]
Since $A$ is a SRRM, we may write 
 $a_{r,s}a_{s,r}=4\cos^{2}\frac{k\pi}{m}$ where the integer $k$   is relatively prime to $m$ and $1\leq k< m$. We then  have $C_{m,-\mathbb{R}}(-a_{r,s},-a_{sr},1_{\mathbb{R}},{1}_{\mathbb{R}})=h_{m}\left(4\cos^{2}\frac{k\pi}{m}\right)=0$  and hence  $\theta'_{A,}(c_{m,P}([rs],[sr],[s],[r])=0$ as required. 

In (3)--(4), we regard $\mathrm{Mat}'_{J\times J}(\mathbb{R})$, for $J\subseteq S$,  as a $\mathbb{R}$-subalgebra of $B$ in the natural way (identifying its matrix units $e_{j,k}$, for $j,k\in J$, with $e_{j,k}\in B$). To prove (3), observe that  by (1), 
 $\mathrm{Img}(\theta_{A,\widetilde R})=\mathrm{Img}(\theta'_{A})$. This image has the same $\mathbb{R}$-linear span as the set of matrix units $e_{r,s}$
 such that there  is some path in $\Gamma_{W,S}$ with successive vertices $r=s_{1},\ldots, s_{n}=s$ such that $a_{s_{i},s_{i+1}}\neq 0$ for $i=1,\ldots, n-1$. Since $A$ is a LRRM, for any  $t\neq u\in S$, we have  $a_{t,u}\neq 0\iff a_{u,t}\neq 0\implies m_{t,u}\neq 2$, and (3) follows.
 
 Part (4) follows from (3) since if $A$ is a SRRM, then $\mathrm{Img}(\theta_{A,R})=\mathrm{Img}(\theta_{A,\widetilde R})$ and for distinct 
$t,u\in S$, we have  $a_{t,u}\neq 0\iff a_{u,t}\neq 0\iff m_{t,u}\neq 2$.
\end{proof}

\subsection{} By base change $-\otimes_{\mathbb{Z}}\mathbb{R}$, we obtain from the ring $\widetilde R_{W,S}$ and  ring homomorphism $\theta_{A,\widetilde R}\colon \widetilde R_{W,S}\to B$ into the $\mathbb{R}$-algebra $B$, 
a $\mathbb{R}$-algebra  $\widetilde{{R}}':=\widetilde{R}_{W,S}\otimes_{\mathbb{Z}}
\mathbb{R}$ and $\mathbb{R}$-algebra homomorphism
$\theta_{A,\widetilde R'}\colon \widetilde R'\to B$. Similarly,  define the $\mathbb{R}$-algebra  ${{R}}':={R_{W,S}}\otimes_{\mathbb{Z}}
\mathbb{R}$ and $\mathbb{R}$-algebra homomorphism
$\theta_{A, R'}\colon  R'\to B$. We have natural inclusion homomorphisms $\widetilde R_{W,S}\to\widetilde R'$ and $R_{W,S}\to R'$ of rings, given in each case by $r\to r\otimes_{\mathbb{Z}}1$, since $\widetilde R_{W,S}$ and $R_{W,S}$ are free as $\mathbb{Z}$-modules.

Note that if $C$ denotes one of the rings (or $\mathbb{R}$-algebras) $\widetilde R_{W,S}$, $\widetilde R'$,  $R_{W,S}$, $R'$ or $B$, we have defined on $C$  commuting left and right $W$ actions, such that the left (resp., right)  $W$-action is right (resp., left) 
$C$-linear.  We regard each such $C$ as a $W\times W^{\mathrm{op}}$-set in the natural way.  The two inclusion homomorphisms $\widetilde R_{W,S}\hookrightarrow\widetilde R'$ and $R_{W,S}\hookrightarrow R'$, and the canonical surjective homorphisms $\widetilde R_{W,S}\twoheadrightarrow R$
and $\widetilde R'_{W,S}\twoheadrightarrow R'$   are obviously   $W\times W^{\mathrm{op}}$-equivariant.

 \begin{theorem}\label{WxWaction}  Let notations and assumptions be as above. 
\begin{enumerate}\item If $A$ is a LRRM for $(W,S)$, the ring homomorphism $\theta_{A,{\widetilde R}}\colon \widetilde R_{W,S}\to B$  and $\mathbb{R}$-algebra homomorphism
 $\theta_{A,{\widetilde R}'}\colon \widetilde R'\to B$ 
   are $W\times W^{\mathrm{op}}$-equivariant.
   \item If $A$ is a SRRM for $(W,S)$, the ring homomorphism   $\theta_{A,{R}}\colon  R_{W,S}\to B$   and $\mathbb{R}$-algebra homomorphism $\theta_{A,{ R}'}\colon  R'\to B$ are
   $W\times W^{\mathrm{op}}$-equivariant.
    \item Suppose that  $(W,S)$ is irreducible and  $A$ is a SRRM. Then   $\theta_{A,R'}$ is surjective.  
    \item The $W\times W^{\mathrm{op}}$ actions on $R_{W,S}$,  $R'$  $\widetilde R_{W,S}$, and $\widetilde R'$ are all faithful.
\end{enumerate}
\end{theorem}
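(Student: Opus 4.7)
For parts (1) and (2), the plan is to verify equivariance on the idempotent generators $[r]$ under the simple reflections $s\in S$ and then extend. A direct substitution shows that both sides of $\theta(s[r])=s\theta([r])$ evaluate to $-e_{s,s}$ when $r=s$ (using $a_{s,s}=2$) and to $e_{r,r}-a_{s,r}e_{s,r}$ when $r\neq s$, via the formulas $\theta([sr])=-a_{s,r}e_{s,r}$ and $se_{r,t}=e_{r,t}-a_{s,r}e_{s,t}$ for the left $W$-action on $B=\ck V\otimes_{\mathbb{R}}V$ inherited from $\ck V$; the right-action identity $\theta([r]s)=\theta([r])s$ is symmetric with the $a$-indices swapped, evaluating both sides to $e_{r,r}-a_{r,s}e_{r,s}$ when $r\neq s$. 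The check extends from the generators $[r]$ to all of $R'$ (resp.\ $\widetilde R$) via the right (resp.\ left) $R'$-linearity of the left (resp.\ right) $W$-action together with $R'=\bigoplus_{r,s\in S}[r]R'[s]$, and from simple reflections to $W$ since $W=\pair{S}$. Base change $-\otimes_{\mathbb{Z}}\mathbb{R}$ promotes the ring-level identities to the $\mathbb{R}$-algebra statements.

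Part (3) is immediate from Proposition \ref{homsm}(4): for irreducible $(W,S)$ the decomposition there collapses to the single summand $\mathrm{Mat}'_{S\times S}(\mathbb{R})=B$, and since $\mathrm{Img}(\theta_{A,R'})$ is already an $\mathbb{R}$-subspace of $B$, it coincides with its $\mathbb{R}$-span, giving surjectivity.

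For part (4) the plan is first to reduce to the finite-rank case, then to argue via the basis of Theorem \ref{basispar}. If $(w_1,w_2)\in W\times W^{\mathrm{op}}$ acts trivially on $R_{W,S}$, choose a finite $J\subseteq I$ large enough that $w_1,w_2\in W_J$; by Lemma \ref{dirlim}(5) the map $\tilde i_{J,I}\colon R_{W_J,S_J}\hookrightarrow R_{W,S}$ is an injective $W_J\times W_J^{\mathrm{op}}$-equivariant homomorphism, so $(w_1,w_2)$ must act trivially on $R_{W_J,S_J}$. In finite rank, iteratively applying the rules $s[r]=[r]+[sr]$ (or $-[s]$ if $r=s$), together with the right $R$-linearity of the left action to propagate through longer basis paths, and the analogous rules on the right, expresses $w_1\cdot[r]\cdot w_2$ as an explicit integer combination of basis elements from $W'_1$. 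The plan is to isolate a canonical leading basis path (of greatest length modulo $C_m$-reductions) carrying coefficient $\pm 1$, whose length and sequence of vertices encode reduced expressions of both $w_1$ and $w_2$, forcing $w_1=w_2=1$. The argument runs more transparently in $\widetilde R$, whose basis $W_1$ consists of all reduced words and requires no $C_m$-reduction; it transfers to $R$ by choosing the test generator $r\in S$ so that the concatenated words avoid the forbidden alternations defining $W'_1\subseteq W_1$, and to the $\mathbb{R}$-algebras $\widetilde R'$ and $R'$ by base change using the $\mathbb{Z}$-freeness of $\widetilde R_{W,S}$ and $R_{W,S}$. The main obstacle is the bookkeeping of these $C_m$-reductions when propagating long products through the length filtration: naive leading terms of length $\ell(w_1)+\ell(w_2)+1$ may contain $(m_{r',s'}+1)$-alternations that collapse under reduction, and one must control this using Tits' solution to the word problem for $(W,S)$ and the combinatorial description of $W'_1$ from Section \ref{refreps}.
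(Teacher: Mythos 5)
Parts (1)--(3) match the paper's proof: you verify equivariance of $\theta_{A,\widetilde R}$ on the generators $[r]$ under simple reflections, propagate via $R'$-linearity of the $W$-actions and induction on length, obtain (2) by the factorization through $R_{W,S}$, and read off (3) from Proposition~\ref{homsm}(4). All of that is the paper's argument.

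Part (4) is where you genuinely diverge, and where the proposal is incomplete. Your plan is a direct combinatorial argument: reduce to finite rank, expand $w_1\cdot[r]\cdot w_2$ in the basis of Theorem~\ref{basispar}, and isolate a leading basis path of maximal length encoding reduced expressions of $w_1$ and $w_2$. You explicitly flag the obstacle---controlling the $C_m$-reductions so that the putative leading term of length $\ell(w_1)+\ell(w_2)+1$ survives with coefficient $\pm1$ and remains in $W'_1$---and you do not resolve it. That is a real gap, not a routine detail: even in $\widetilde R$ one must ensure the concatenated word $s_1\cdots s_k\, r\, t_1\cdots t_l$ is a path in $\Gamma_{W,S}$ (consecutive vertices adjacent) and lies in $W_1$, which can fail for particular choices of $r$; in $R$ one additionally has to dodge the forbidden $(1+\varphi(m))$-alternations, and it is not clear a uniform choice of test idempotent $r$ exists for all $(w_1,w_2)$. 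Note also that faithfulness on $\widetilde R$ gives you nothing for free on $R$ since $R$ is a quotient of $\widetilde R$, so the transfer has to be a separate argument, not a deduction.

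The paper's proof of (4) avoids all of this. It reduces to $(W,S)$ irreducible, picks a NGCM $A$, and invokes the standard fact that $W$ acts faithfully on $V_A$ and $\ck V_A$, hence $W\times W^{\mathrm{op}}$ acts faithfully on $B\cong \ck V_A\otimes_{\mathbb{R}} V_A$. Since by (2)--(3) $B$ is an equivariant quotient of $R'$, faithfulness on $R'$ follows, and since the $R'$-action arises by base change from $R_{W,S}$ (which is $\mathbb{Z}$-free), faithfulness on $R_{W,S}$ follows; finally $R_{W,S}$ and $R'$ are equivariant quotients of $\widetilde R_{W,S}$ and $\widetilde R'$, so faithfulness propagates upward. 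This direction of propagation (from the quotient $B$ up through $R'$ to $\widetilde R'$) is the key structural point your combinatorial route misses, and it completely sidesteps the $C_m$-reduction bookkeeping. If you want to salvage your approach for (4), you would either have to carry out the Tits-style reduction argument carefully (including choice of $r$ and the positivity of the leading coefficient), or, better, notice that the surjection $\theta_{A,R'}\colon R'\twoheadrightarrow B$ you already established in (2)--(3) hands you faithfulness for free.
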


\begin{proof} For  all  the various left and right $W$-sets $X$ involved in this proof, we   write the various action maps $W\times X\to X$ as $(w,x)\mapsto wx$ and  $X\times W\to X$ as $(x,w)\mapsto xw$ for notational simplicity.  
To prove (1), it will suffice to prove the assertion for $\theta_{A,\widetilde R}$, as that for  $\theta_{A,\widetilde R'}$ then follows by base change. We show first that for $r,s\in S$,  we have
$\theta_{A,{\widetilde {R}}}([r]s)=\left(\theta_{A,{\widetilde {R}}}([r])\right)s$. If  
$s=r$,  then  \[\theta_{A,{\widetilde {R}}}([r]s)=\theta_{A,{\widetilde {R}}}(-[r])=-e_{r,r}=e_{r,r}r=\theta_{A,{\widetilde {R}}}([r])s\] while if $s\neq r$, then 
\[\theta_{A,{\widetilde {R}}}([r]s)=\theta_{A,{\widetilde {R}}}([r]+[rs])=
e_{r,r}-a_{r,s}e_{r,s}=e_{r,r}s=\theta_{A,{\widetilde {R}}}([r])s,\]  as required.
Now for any $x\in R$ and  $r,s\in S$ we have \begin{equation*}\begin{split}
\theta_{A,{\widetilde {R}}}((x\cdot [r])s)&=\theta_{A,{\widetilde {R}}}(x\cdot ([r]s))=\theta_{A,{\widetilde {R}}}(x)\cdot 
\theta_{A,{\widetilde {R}}}([r]s)=\theta_{A,{\widetilde {R}}}(x)\cdot
(\theta_{A,{\widetilde {R}}}([r])s)\\&=\left(\theta_{A,{\widetilde {R}}}(x) 
\cdot\theta_{A,{\widetilde {R}}}([r])\right)s=
\left(\theta_{A,{\widetilde {R}}}(x\cdot [r])\right)s\end{split}
\end{equation*}
Since $x=\sum_{r\in R}x\cdot [r]$ where $x\cdot [r]=0$ for almost all $r\in S$, this impies 
\begin{equation*}\begin{split}
\theta_{A,{\widetilde {R}}}(xs)&=\theta_{A,{\widetilde {R}}}\left(\sum_{r\in S}\left(x\cdot [r]\right)s\right)=
\sum_{r\in S}\theta_{A,{\widetilde {R}}}\left((x\cdot[r])s\right)\\ &=\sum_{r\in S}\theta_{A,{\widetilde {R}}}(x\cdot[r])s=\theta_{A,{\widetilde {R}}}\left(\sum_{r\in S}(x\cdot [r])\right )s=\theta_{A,{\widetilde {R}}}(x)s
\end{split}
\end{equation*}
Finally, by induction on the length of $w\in W$, it follows 
$\theta_{A,{\widetilde {R}}}(xw)=\theta_{A,{\widetilde {R}}}(x)w$ for all $x\in \widetilde R_{W,S}$ and $w\in W$. Similarly or by symmetry,  
$\theta_{A,{\widetilde {R}}}(wx)=w\theta_{A,{\widetilde {R}}}(x)$ for all $x\in \widetilde R_{W,S}$ and $w\in W$completing the proof of (1).

Part (2) follows from (1) since if  $A$ is a SRRM, we have the factorization of  $\theta_{A,\widetilde R}$ in Proposition \ref{homsm} where the canonical surjection 
$\widetilde{R}_{W,S}\twoheadrightarrow R_{W,S}$ is $W\times W^{\mathrm{op}}$-equivariant. Part (3) follows from Proposition \ref{homsm} (4).

We prove (4). There is no loss of generality in taking $(W,S)$ to be irreducible.  Let $A$  be a NGCM for $(W,S)$. It is known
that the  $W$ actions on $V_{A}$ and $\ck V_{A}$ are faithful, so the induced $W\times W^{\mathrm{op}}$-action  on $\ck V_{A}\otimes_{\mathbb{R}} V_{A}\cong B$ is faithful. 
But $B$ is a quotient of  $R'$, so  the  $W\times W^{\mathrm{op}}$-action  on $R'$ is faithful. Since that action arises by base change from that on $R_{W,S}$, the action on $R_{W,S}$ is faithful. Since $R_{W,S}$ (resp., $R'$) is a quotient of $\widetilde R_{W,S}$ (resp., $\widetilde R'$), all the required    $W\times W^{\mathrm{op}}$-actions are faithful.
 \end{proof}
 The above theorem has many consequences for special classes of  Coxeter groups, such as  finite Coxeter groups groups and affine Weyl groups, amongst which we observe here only the following.    
\begin{corollary}  Suppose that $(W,S)$ is an irreducible finite Weyl group with Cartan matrix $A$. Then $R'$ is $W\times W^{\mathrm{op}}$-equivariantly isomorphic to  the matrix ring $B=\mathrm{Mat}_{S\times S}(\mathbb{R})$ as $\mathbb{R}$-algebra, where   $B$ may be   $W\times W^{\mathrm{op}}$-equivariantly identified with  $\ck V_{A}\otimes_{\mathbb{R}}V_{A}$ so that $\ck\alpha_{r}\otimes \alpha_{s}= e_{r,s}$. \end{corollary}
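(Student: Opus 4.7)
The plan is to show that $\theta_{A,R'}\colon R'\to B$ is the desired isomorphism; the equivariance and surjectivity are already in hand from Theorem \ref{WxWaction}, so what remains is a dimension count.

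First, I would note that since $(W,S)$ is an irreducible finite Weyl group, all off-diagonal entries of the Coxeter matrix satisfy $m_{r,s}\in\{2,3,4,6\}$ (the crystallographic condition) and the Coxeter graph of $(W,S)$ is a tree (true for each of $A_n$, $B_n/C_n$, $D_n$, $E_6$, $E_7$, $E_8$, $F_4$, $G_2$). I would also verify briefly that the Cartan matrix $A$ is a SRRM in the sense of Example \ref{realrefrep}: the relations $a_{r,r}=2$, $a_{r,s}=0$ when $m_{r,s}=2$, and $a_{r,s}a_{s,r}=4\cos^2(\pi/m_{r,s})$ when $3\leq m_{r,s}<\infty$ correspond to $k_{r,s}=1$, which is automatically coprime to $m_{r,s}$.

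Next, I would invoke Example \ref{Weyl}(1), which under precisely the hypotheses above yields
\[
\mathrm{rank}_{\mathbb{Z}}(R_{W,S})=\vert S\vert^{2}.
\]
Since $R'=R_{W,S}\otimes_{\mathbb{Z}}\mathbb{R}$ and $R_{W,S}$ is free as $\mathbb{Z}$-module, this gives $\dim_{\mathbb{R}}R'=\vert S\vert^{2}$. On the other hand, $B=\mathrm{Mat}_{S\times S}(\mathbb{R})$ has $\mathbb{R}$-dimension $\vert S\vert^{2}$ as well (since $S$ is finite, the condition on finitely many nonzero entries is vacuous). By Theorem \ref{WxWaction}(3), the $\mathbb{R}$-algebra homomorphism $\theta_{A,R'}\colon R'\to B$ is surjective, so equality of finite dimensions forces $\theta_{A,R'}$ to be an isomorphism of $\mathbb{R}$-algebras.

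Finally, the $W\times W^{\mathrm{op}}$-equivariance of this isomorphism is exactly Theorem \ref{WxWaction}(2). The identification $B\cong \ck V_{A}\otimes_{\mathbb{R}}V_{A}$ sending $\ck\alpha_{r}\otimes\alpha_{s}\mapsto e_{r,s}$ was set up prior to Proposition \ref{homsm}, and the left/right $W$-actions on $\ck V_{A}\otimes_{\mathbb{R}}V_{A}$ were arranged there to match the left/right matrix multiplications on $B$, so this identification is automatically $W\times W^{\mathrm{op}}$-equivariant. I do not foresee a genuine obstacle here; the substance of the result was carried by Theorem \ref{WxWaction} and by the rank computation of Example \ref{Weyl}(1), and the only point that requires even a moment's care is verifying that an irreducible finite Weyl group Cartan matrix satisfies the SRRM hypotheses so that Theorem \ref{WxWaction}(2)--(3) actually apply.
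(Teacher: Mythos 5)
Your proof is correct and follows the same route as the paper: reduce to the observation that $\theta_{A,R'}$ is a surjective (Theorem \ref{WxWaction}(3)), $W\times W^{\mathrm{op}}$-equivariant (Theorem \ref{WxWaction}(2)) $\mathbb{R}$-algebra map between spaces both of dimension $\vert S\vert^2$ by Example \ref{Weyl}(1), hence an isomorphism. The only cosmetic difference is that the paper phrases the argument under the slightly weaker hypotheses (finite rank, tree Coxeter graph, $m_{r,s}\in\{2,3,4,6\}$) and leaves the SRRM check implicit, whereas you verify it explicitly; both amount to the same thing.
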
 
\begin{proof}
We prove  this under the  more general hypotheses that 
$(W,S)$ is of finite rank, that the Coxeter graph of $(W,S)$ is a tree and that $m_{r,s}\in \{2,3,4,6\}$ 
for all $r\neq s$ in $S$.\footnote{The finite rank assumption can also be omitted but we leave the additional arguments to show this to the reader.}
  By (2) and Example \ref{Weyl}(1),
$\theta_{A,R'}\colon R'\to B=\mathrm{Mat}_{S\times S}(\mathbb{R})$ is a surjective, $W\times W^{\mathrm{op}}$-equivariant homomorphism of $\mathbb{R}$-algebras of the same finite dimension $\vert S\vert ^{2}$. Hence it is an isomorphism as asserted. 
\end{proof}

 \section{Free Products of Rings}\label{fpr}
 In this section, we give a summary of definitions and results on the free product of rings from Cohn \cite{Cohn1}, \cite{Cohnfir}, and \cite{Cohn2}. We apply the results to show that a ring associated to an edge of the graph of a Coxeter system is a free product of rings.

\subsection{}  Let $B$ be an associative ring with a unit element and let $A$ be a (not necessarily commutative) subring of $B$ containing the unit element,
then we say $B$ is an $A$ ring. (Note that if $A$ is commutative and $B$ is a unital  $A$-algebra 
in which $A$ embeds by $a \mapsto a1_B$,  then $B$ is an $A$ ring). 
A homomorphism of $A$ rings $f:B\to C$ is an $A$-bimodule homomorphism that sends the unit element of $B$ to the unit element of $C$.

\begin{definition} (Cohn \cite{Cohn1}, Section 3) \label{deffpr}The $A$ ring $R$  is said to be the free product of the $A$ rings  $\{R_i | i \in I\}$ ( over $A$) if 
 $\{R_i | i \in I\}$   forms a family of subrings of $R$ such that 
\begin{enumerate}[label=(\roman*)]
\item $R_i \cap R_j = A$ for $i \not= j$, 
\item if $X_i$ is a set of generators of $R_i$, $i \in I$, then $\bigcup_IX_i$ is a set of generators of $R$,
\item If $C_i, i \in I$ is a set of defining relations of $R_i, i \in I$ (in terms of the set of generators $X_i$) then $\bigcup_IC_i$ is a set of defining relations of $R$ (in terms of the set of generators $\bigcup_IX_i$). 
\end{enumerate}
\end{definition}

The free product of $A$ rings does  not always exist, but when it does it coincides with 
the universal product which is always guaranteed to exist, see Cohn \cite{Cohn1}, Theorem 3.1.
When the free product of the $A$-rings $\{R_i | i \in I\}$ exists, we will denote it by $\displaystyle \{\ast_AR_i\}_{i \in I}$ or $R_1\ast_A R_2$ if $I = \{1, 2\}$.

\begin{definition}An $A$ ring $U$ is called the universal product of the $A$ rings  $\{R_i |\  i \in I\}$ if
there exist homomorphisms $\phi_i:R_i \to U$ such that 
\begin{enumerate}[label=(\roman*)]
\item $U$ is generated by the subrings $\phi_i(R_i)$, 
\item given any $A$ ring, $C$, and any family of homomorphisms $\psi_i: R_i \to C$, there exists 
a homomorphism $\psi : U \to C$ satisfying $\psi \circ \phi_i = \psi_i$ for each $i \in I$.
\end{enumerate}
\end{definition}

One can see that the universal product of $A$ rings is determined up to isomorphism by the universal mapping property. It is not difficult to see that the free product, when it exists, has the same universal mapping property and thus must be isomorphic to the universal product. The following theorem gives the precise relationship between the two:

\begin{theorem} (Cohn \cite{Cohn1})\label{cohn} Let $R_i, i \in I$ be a family of $A$ rings and $\{U, \phi_i:R_i \to U| \ i \in I\}$ their universal product. Then the free product of the $R_i, i \in I$ exists if and only if 
\begin{enumerate}[label=(\roman*)]
\item the canonical homomorphism $\phi_i:R_i \to U$ is a monomorphism for each $i \in I$, 
\item $\phi_i(R_i)\cap \phi_j(R_j) = A$ for $i \not= j$.
\end{enumerate}
When these two conditions hold, $U$ is in fact the free product of the rings $R_i$.
\end{theorem}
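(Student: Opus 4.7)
The plan is to prove both implications by exploiting the two universal characterizations at play: the universal mapping property defining $U$, and the generators-and-relations characterization given by Definition \ref{deffpr}(iii). For the forward direction, I would start from an existing free product $R$ and show it satisfies the same universal property as $U$, forcing $R \cong U$ and transporting conditions (i), (ii) back to $U$. For the backward direction, I would produce a second copy of $U$ from an explicit presentation and show the two coincide, so that $U$ inherits the desired presentation by $\bigcup_I X_i$ modulo $\bigcup_I C_i$.

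For the ``only if'' direction, assume the free product $R = \ast_A R_i$ exists, with its subrings $R_i \subseteq R$. Given any $A$-ring $C$ and $A$-ring homomorphisms $\psi_i \colon R_i \to C$, I would define $\psi \colon R \to C$ by specifying $\psi(x) = \psi_i(x)$ for $x \in X_i$; this is well defined because $\bigcup_I C_i$ is, by Definition \ref{deffpr}(iii), a full set of defining relations for $R$, and every such relation already holds in $C$ as the image of a relation in some $R_i$. Uniqueness of $\psi$ follows from the fact that $\bigcup_I X_i$ generates $R$. Thus $R$ solves the universal problem for the universal product, yielding a canonical isomorphism $R \cong U$ that identifies each inclusion $R_i \hookrightarrow R$ with $\phi_i$. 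Therefore $\phi_i$ is injective and $\phi_i(R_i) \cap \phi_j(R_j) = R_i \cap R_j = A$, giving (i) and (ii).

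For the ``if'' direction, assume (i) and (ii). By (i), I may identify each $R_i$ with $\phi_i(R_i) \subseteq U$. Condition (i) of Definition \ref{deffpr} then follows from (ii), and condition (ii) of the definition follows because $U$, being the universal product, is generated as an $A$-ring by the images of the $R_i$, hence by $\bigcup_I X_i$. For condition (iii), I would introduce the auxiliary $A$-ring $U' := A\langle \bigcup_I X_i \mid \bigcup_I C_i \rangle$. Since each $R_i$ is presented by $X_i$ modulo $C_i$, there are canonical $A$-ring homomorphisms $\nu_i \colon R_i \to U'$, and the universal property of $U$ produces $\alpha \colon U \to U'$ with $\alpha \circ \phi_i = \nu_i$. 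Conversely, inside $U$ the elements $X_i$ satisfy the relations $C_i$ (they generate $\phi_i(R_i) \cong R_i$), so the presentation of $U'$ yields a homomorphism $\beta \colon U' \to U$ fixing each $x \in X_i$. The compositions $\alpha \circ \beta$ and $\beta \circ \alpha$ then act as the identity on the generating sets $\bigcup_I X_i$ in $U'$ and $U$ respectively, forcing $\alpha$ and $\beta$ to be mutually inverse isomorphisms. Thus $U \cong U'$, which is exactly condition (iii).

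The main obstacle I anticipate is the subtlety of condition (iii): ``defining relations'' is a presentation-theoretic notion, whereas the universal product is built from a mapping property, and one must rule out the possibility that embedding the $R_i$'s inside $U$ forces identifications beyond those encoded by $\bigcup_I C_i$. Hypothesis (i) prevents collapse within a single factor, while (ii) prevents collapse across different factors; together they ensure the auxiliary presentation $U'$ coincides with $U$ rather than being a proper quotient, which is what allows the comparison argument above to close.
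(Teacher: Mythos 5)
This theorem is cited from Cohn \cite{Cohn1} and the paper does not reproduce a proof, so there is no internal argument to compare against; you were effectively reconstructing Cohn's. Your overall strategy is correct and is the standard one: a free product (when it exists) solves the universal problem and hence coincides with $U$ via an identification carrying the subring inclusions to the $\phi_i$, which transports (i) and (ii) to $U$; conversely, (i) and (ii) let you treat each $\phi_i(R_i)$ as an isomorphic copy of $R_i$ in $U$, after which Definition~\ref{deffpr}(i)--(iii) are checked directly.

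One conceptual point in your final paragraph is off, though it does not invalidate the proof. You say that hypotheses (i) and (ii) ``ensure the auxiliary presentation $U'$ coincides with $U$ rather than being a proper quotient.'' In fact $U'\cong U$ holds unconditionally: $U' = A\langle\bigcup_I X_i\mid\bigcup_I C_i\rangle$ already satisfies the universal mapping property defining the universal product, since any compatible family $\psi_i\colon R_i\to C$ of $A$-ring homomorphisms extends to $U'$ by sending each generator $x\in X_i$ to $\psi_i(x)$ and observing that the relations $\bigcup_I C_i$ are killed because each $\psi_i$ is a homomorphism; your $\alpha$ and $\beta$ are therefore mutually inverse without any appeal to (i) or (ii). What (i) and (ii) actually buy is the embedding, not the presentation: without them the $R_i$ simply do not sit inside $U$ as subrings with pairwise intersection $A$, and Definition~\ref{deffpr} cannot even be applied to $U$. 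The clean division of labor is: (i) makes $\phi_i$ an embedding; (ii) gives Definition~\ref{deffpr}(i); the generation clause of the universal product gives Definition~\ref{deffpr}(ii); and the $\alpha/\beta$ comparison---available unconditionally---gives Definition~\ref{deffpr}(iii). With that clarification the argument is complete and correct.
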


In Cohn \cite{Cohn2} (also see Cohn \cite{Cohnfir} and Serre \cite{Serre}, Exercise 1.2. for special cases), the construction of the free product of 
a family of $A$ rings,  using 
tensor products of bimodules,  is described. The free product   exists when 
some conditions of flatness are satisfied, see Cohn \cite{Cohn1}, Theorem 4.5. 
In particular, when  the base ring, $A$,  is a field the free product of any family of $A$ rings exists, 
Cohn \cite{Cohn1}, Corollary to Theorem 4.7.

Using the construction of the free product via tensor products of bimodules, Cohn proves the following:
\begin{theorem}(Cohn \cite{Cohn2}, Theorem 2.5) \label{zerodiv} If $\{R_i | i \in I\}$ is any family of $K$-rings without zero divisors, where $K$ is a field, then their free product has no zero divisors. 
\end{theorem}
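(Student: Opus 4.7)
The plan is to leverage Cohn's bimodule-tensor construction of the free product in conjunction with a length filtration, so as to reduce the claim to a graded computation. Since $K$ is a field, for each $i \in I$ choose a $K$-linear complement $V_i$ of $K$ in $R_i$. By the existence and concrete construction of free products over a field (Cohn \cite{Cohn1}, Corollary to Theorem 4.7, together with the bimodule-tensor realization of \cite{Cohn2}), the free product $R := \ast_K R_i$ exists and admits the $K$-vector space decomposition
\[ R = K \oplus \bigoplus_{n \geq 1}\; \bigoplus_{i_1 \neq i_2 \neq \cdots \neq i_n} V_{i_1} \otimes_K \cdots \otimes_K V_{i_n}. \]
Multiplication on these ``reduced tensors'' is concatenation when the last index of the first tensor differs from the first index of the second, and otherwise replaces the matching-index junction pair $v \otimes w \in V_i \otimes V_i$ by $v \cdot w \in R_i = K \oplus V_i$, producing one term of length $n+m$ and one of length $n+m-2$ (with possible further reduction at newly adjacent matching indices).

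Let $F^n R$ be the $K$-span of reduced tensors of length $\leq n$. The multiplication rule gives $F^n R \cdot F^m R \subseteq F^{n+m} R$, so I form the associated graded ring $\mathrm{gr}\, R = \bigoplus_n F^n R / F^{n-1} R$, whose multiplication on reduced tensors is either concatenation or, at a matching-index junction, $v_1 \otimes \cdots \otimes v_{n-1} \otimes \pi_i(v_n w_1) \otimes w_2 \otimes \cdots \otimes w_m$, where $\pi_i : R_i \twoheadrightarrow V_i$ is the projection along $K$. By the standard filtered-to-graded principle, it suffices to prove $\mathrm{gr}\, R$ is a domain: any putative non-trivial relation $ab = 0$ in $R$ with $a, b$ of minimal filtration degree would pass to a non-trivial relation $\bar a \bar b = 0$ of leading symbols in $\mathrm{gr}\, R$.

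The crux, and the main obstacle, is showing $\mathrm{gr}\, R$ has no zero divisors. For non-zero homogeneous $a \in \mathrm{gr}_n R$ and $b \in \mathrm{gr}_m R$, I would decompose each according to its trailing (respectively leading) letter. Non-matching-index contributions to $ab$ are linearly independent concatenations, so the only possible cancellations live in the matching-index pieces of the shape $\sum_{\alpha, \beta} t_\alpha \otimes \pi_i(u_\alpha v_\beta) \otimes s_\beta$. The delicate point is that the junction map $V_i \otimes V_i \to V_i$, $v \otimes w \mapsto \pi_i(vw)$, is in general neither injective nor surjective, because products in $R_i$ may have non-trivial $K$-components, and so one cannot simply invoke the tensor algebra of a vector space. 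To complete the argument I would reduce to $\lvert I \rvert = 2$ by induction on $\lvert I \rvert$ using associativity of the free product, and in the two-ring case exploit linear independence of the tail and head tensors together with the domain property of each $R_i$ in the ambient $R_i$ (where $vw \neq 0$ holds even when $\pi_i(vw) = 0$) to show, after careful bookkeeping of which pairings $\sum u_\alpha v_\beta$ can land in $K \subset R_i$, that systematic cancellation in $ab$ is impossible unless $a = 0$ or $b = 0$.
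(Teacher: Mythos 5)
This result is cited from Cohn rather than proved in the paper, so the only question is whether your argument is sound on its own terms. The reduced-tensor decomposition is the right starting point, but the passage to the associated graded ring has a fatal defect. With the length filtration you define, if $a$ is a pure reduced tensor of length $n$ ending in $V_i$ and $b$ is one of length $m$ beginning in $V_i$, then $ab$ already lies in $F^{n+m-1}R$: the length drops by at least one at the matching junction. Consequently the image of $ab$ in $\mathrm{gr}_{n+m}R = F^{n+m}R/F^{n+m-1}R$ is zero. Thus in $\mathrm{gr}\,R$ every matching-junction product vanishes identically, and in particular $v\cdot w = 0$ for any $v,w\in V_i$, so $\mathrm{gr}\,R$ is very far from a domain and the filtered-to-graded reduction cannot get off the ground. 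The formula you give for the graded multiplication, $v_1\otimes\cdots\otimes\pi_i(v_nw_1)\otimes\cdots\otimes w_m$, has length $n+m-1$ and therefore lives in $\mathrm{gr}_{n+m-1}R$, not $\mathrm{gr}_{n+m}R$; it cannot be the product of homogeneous elements of degrees $n$ and $m$ in a $\mathbb{Z}$-graded ring. What you are describing is the leading term of $ab$ one filtration step lower, and that assignment is not multiplicative.

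The ``delicate point'' you correctly flag --- that $v\otimes w\mapsto\pi_i(vw)$ is neither injective nor surjective and that $v_nw_1$ may land in $K$ --- is precisely what makes the length filtration too coarse: when $\pi_i(v_nw_1)=0$ the candidate leading term vanishes and the true product drops still further in degree, so there is no consistent leading-symbol map to exploit, even restricted to matching-junction pieces. Establishing that systematic cancellation forces $a=0$ or $b=0$ is therefore not bookkeeping to be added on top of the framework you set up; it is the entire content of the theorem, and Cohn's argument in \cite{Cohn2} proceeds by a normal-form and cancellation analysis that does not factor through an associated graded ring. Finally, the reduction to $\lvert I\rvert=2$ by iterated free products and induction implicitly requires compatibility with direct limits when $I$ is infinite; that is repairable, but it is a secondary gap beside the graded-ring issue.
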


We use Cohn's results to gain insight into  the structure of the following rings, which will in turn help us to understand the structure of the path algebras we have associated to Coxeter systems.

\begin{theorem}\label{polyrings} Let $f(t)$ be a non-constant polynomial in $\mathbb{Q}[t]$ with non-zero constant term, and let $\mathbb{Q}[y, \bar{y}]$ be the ring of polynomials 
over $\mathbb{Q}$ in the non-commuting variables $y$ and $\bar{y}$. Let $R_f$ denote the quotient ring;
$$R_f = \mathbb{Q}[y, \bar{y}]/\langle f(y\bar{y}), f(\bar{y}y)\rangle.$$
 Then $R_f$ is isomorphic to the ring 
$$\mathbb{Q}[x_y, x_y^{-1}] \ast_{\mathbb{Q}} K,$$
where $\mathbb{Q}[x_y, x_y^{-1}]$ is the ring of Laurent polynomials in the variable $x_y$ over $\mathbb{Q}$ and $K$ is  the ring extension $\mathbb{Q}[t]/\langle f(t)\rangle$ of $\mathbb{Q}$. 
Furthermore, if $f(t)$ is irreducible, then $R_f$ is a domain.
\end{theorem}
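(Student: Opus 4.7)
The plan is to construct explicit mutually inverse $\mathbb{Q}$-algebra homomorphisms between $R_f$ and $U := \mathbb{Q}[x_y, x_y^{-1}] \ast_{\mathbb{Q}} K$; the free product $U$ exists because $\mathbb{Q}$ is a field, as cited above. Write $f(t) = t^n + a_{n-1} t^{n-1} + \cdots + a_0$ with $a_0 \neq 0$ by hypothesis, and let $g(t) := -a_0^{-1}(t^{n-1} + a_{n-1}t^{n-2} + \cdots + a_1) \in \mathbb{Q}[t]$, so that $tg(t) = g(t)t = 1$ modulo $f(t)$. In particular, the images $\bar{t} \in K$, $u := y\bar{y}$, and $v := \bar{y}y$ in $R_f$ are all invertible with inverses $g(\bar{t})$, $g(u)$, $g(v)$; this is the only role played by the hypothesis $a_0 \neq 0$.

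The forward map I propose is $\phi : R_f \to U$ given by $y \mapsto x_y \bar{t}$, $\bar{y} \mapsto x_y^{-1}$. Since $\phi(y\bar{y}) = x_y \bar{t} x_y^{-1}$ and $\phi(\bar{y}y) = \bar{t}$, one has $\phi(f(y\bar{y})) = x_y f(\bar{t}) x_y^{-1} = 0$ and $\phi(f(\bar{y}y)) = f(\bar{t}) = 0$, so $\phi$ is well-defined. For the inverse, I will invoke the universal property of the free product applied to the two $\mathbb{Q}$-algebra maps $\psi_1 : \mathbb{Q}[x_y, x_y^{-1}] \to R_f$, $x_y \mapsto y g(v)$, and $\psi_2 : K \to R_f$, $\bar{t} \mapsto v$; the latter is obviously well-defined since $f(v) = 0$.

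The step requiring real verification is that $\psi_1$ is well-defined, i.e.\ that $yg(v)$ is invertible in $R_f$. This rests on the trivial identity $y \cdot (\bar{y}y)^k = (y\bar{y})^k \cdot y$ (both sides are the word $y\bar{y}y\bar{y}\cdots \bar{y}y$ of length $2k+1$), which upgrades by linearity to the relation $yg(v) = g(u)y$ in $R_f$; then $\bar{y}\cdot yg(v) = vg(v) = 1$ and $yg(v)\cdot \bar{y} = g(u)u = 1$ exhibit $\bar{y}$ as a two-sided inverse of $yg(v)$. With $\psi_1, \psi_2$ in hand, the universal property yields $\psi : U \to R_f$, and the identities $\psi\phi = \mathrm{id}$, $\phi\psi = \mathrm{id}$ reduce to a routine check on the generators $y, \bar{y}, x_y, \bar{t}$, using $\bar{t} g(\bar{t}) = 1_K$ in the second direction. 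For the final assertion, if $f$ is irreducible then $K$ is a finite extension field of $\mathbb{Q}$, hence a domain, while $\mathbb{Q}[x_y, x_y^{-1}]$ is also a domain; Theorem~\ref{zerodiv} (Cohn) then implies that $U$, and so $R_f$, has no zero divisors.
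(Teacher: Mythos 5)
Your proof is correct and takes essentially the same route as the paper: both establish the isomorphism by building explicit mutually inverse $\mathbb{Q}$-algebra maps, with the inverse produced via the universal property of the free product, and both hinge on extracting a polynomial $g$ (the paper's $h$) with $tg(t)\equiv 1 \pmod{f(t)}$ and on the commutation $y\,g(\bar y y)=g(y\bar y)\,y$. The only difference is cosmetic — you send $y\mapsto x_y\bar t$, $\bar y\mapsto x_y^{-1}$ and $x_y\mapsto y\,g(\bar y y)$, while the paper makes the mirror-image choice $y\mapsto x_y$, $\bar y\mapsto x_y^{-1}t$ and $x_y\mapsto y$, $x_y^{-1}\mapsto \bar y\,h(y\bar y)$.
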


\begin{proof} If $f(t)$ is irreducible, then $K$ is a field extension of $\mathbb{Q}$ and hence, is a domain. 
That the ring of Laurent polynomials, $\mathbb{Q}[x_y, x_y^{-1}]$, is a domain is well known.
By Theorem \ref{zerodiv}, the ring $\mathbb{Q}[x_y, x_y^{-1}] \ast_{\mathbb{Q}} K$ does not have zero divisors, since 
both $\mathbb{Q}[x_y, x_y^{-1}]$ and $K = \mathbb{Q}[t]/\langle f(t)\rangle$ are domains. Hence it is enough to prove that $R_f$ is isomorphic to the free product
$$\mathbb{Q}[x_y, x_y^{-1}] \ast_{\mathbb{Q}} K.$$

First we claim that $y$ has an inverse in the ring $R_f = \mathbb{Q}[y, \bar{y}]/\langle f(y\bar{y}), f(\bar{y}y)\rangle$. Since $f$  is non-constant  with a non-zero constant term, $f(t) =  th(t) + \kappa = h(t)t + \kappa$ for some $\kappa \in \mathbb{Q}, \kappa \not=0$ and some $h(t) \in \mathbb{Q}[t]$. 
Without loss of generality, we can assume that $\kappa = -1$. Thus, 
$$y\bar{y}h(y\bar{y}) = 1_{R_f} = h(y\bar{y})y\bar{y},$$
and $\displaystyle (y\bar{y})^{-1} = h(y\bar{y})$ in the ring $R_f$. 
By symmetry, $\bar{y}y$ has an inverse in 
$R_f$ and  $\displaystyle (\bar{y}y)^{-1} = h(\bar{y}y)$.
 Using our expression for $ (y\bar{y})^{-1}$, we see that $\displaystyle \bar{y}h(y\bar{y})$ is a right inverse for $y$ and using our expression for $(\bar{y}y)^{-1}$, we see that 
$$1_{R_f} =  h(\bar{y}y)\bar{y}y =  \bar{y}h(y\bar{y})y,$$
which shows that $\displaystyle  \bar{y}h(y\bar{y})$ is also a left inverse for $y$ in $R_f$. This proves our claim.

Now let $\hat{\phi}:\mathbb{Q}[y, \bar{y}] \to \mathbb{Q}[x_y, x_y^{-1}] \ast_{\mathbb{Q}} K$ be the unique $\mathbb{Q}$ algebra homomorphism with
$\hat{\phi}(y) = x_y$ and $\hat{\phi}(\bar{y}) = x_y^{-1}t$. This induces  a well defined quotient homomorphism, 
 $\phi: R_f \to \mathbb{Q}[x_y, x_y^{-1}] \ast_{\mathbb{Q}} K$  since $\hat{\phi}(f(y\bar{y})) = f(\hat{\phi}(y\bar{y})) = f(x_yx_y^{-1}t) = f(t) = 0$ and $\hat{\phi}(f(\bar{y}y)) = f(\hat{\phi}(\bar{y}y))= f(x_y^{-1}tx_y)
= x_y^{-1}f(t)x_y = 0$.

To show that the map $\phi$ defined above is an isomorphism, we construct its inverse.
We let $\hat{\psi}_1: \mathbb{Q}[t] \to R_f$ be the $\mathbb{Q}$ algebra homomorphism such that $\hat{\psi}_1(t) = y\bar{y}$.
Since $\hat{\psi}_1(f(t)) = f(\hat{\psi}_1(t)) = f(y\bar{y}) = 0$, we get a well defined quotient homomorphism
 $\psi_1: K \to R_f$.  Let  $\psi_2:\mathbb{Q}[x_y, x_y^{-1}] \to R_f$ be the $\mathbb{Q}$ algebra homomorphism with  $\psi_2(x_y) = y$ and $\displaystyle \psi_2(x_y^{-1}) = 
\bar{y}h(y\bar{y}).$ This is a well defined  homomorphism since $\psi_2(x_y^{-1}) = (\psi_2(x_y))^{-1}$. 
The universal mapping property of the free product gives a ring homomorphism $\psi: \mathbb{Q}[x_y, x_y^{-1}] \ast_{\mathbb{Q}} K \to R_f$ which restricts to $\psi_1$ and $\psi_2$ on $K$ and $\mathbb{Q}[x_y, x_y^{-1}]$ respectively.

We finish the proof by showing that the homomorphism $\psi$ is the inverse of the homomorphism $\phi$. 
We need only verify that $\psi \circ \phi$ is the identity on the generators of $R_f$ and that $\phi \circ \psi$ is the identity on the generators of $\mathbb{Q}[x_y, x_y^{-1}] \ast_{\mathbb{Q}} K$. We have $\psi(\phi(y)) = \psi(x_y) = y$ and 
$$\psi(\phi(\bar{y})) = \psi(x_y^{-1}t) = \bar{y}h(y\bar{y})y\bar{y} = y^{-1}y\bar{y} = \bar{y}.$$
 Thus $\psi \circ \phi$ is the identity map on $R_f$. 
On the other hand, $\phi(\psi(t)) = \phi(y\bar{y}) = x_yx_y^{-1}t = t$ and  $\phi(\psi(x_y)) = \phi(y) = x_y$. Finally we have 
$$ \phi(\psi(x_y^{-1})) = \phi\left(\bar{y}h(y\bar{y})\right) = x_y^{-1}th(x_yx_y^{-1}t) = x_y^{-1}th(t) = x_y^{-1},$$
since $f(t) = th(t) - 1$ and hence $\displaystyle th(t) = 1_{\mathbb{Q}[x_y, x_y^{-1}] \ast_{\mathbb{Q}} K}$. 
This shows that the rings are isomorphic and completes our proof. 
\end{proof}

\subsection{} Consider now a  graph $\Gamma_{W, S}$ associated to a Coxeter system $(W, S)$  with generators $S = \{s_1, s_2, \dots , s_N\}$ and Coxeter matrix $(m_{ij})_{\{1 \leq i, j \leq N\}}$.
Let $Y_{W, S} =  Y_{W, S, +} \bigcupdot \overline{Y_{W, S, +}}$ be an orientation of  $\Gamma_{W, S}$. 
To each  $y_{ij} = [s_is_j] \in Y_{W, S, +} $,  we associate a quotient ring,  $R_{y_{ij}}$, 
of the polynomial ring $\mathbb{Q}[x_{y_{ij}}, \bar{x}_{{y}_{ij}}]$ in the non-commuting variables $x_{y_{ij}}$ and $\bar{x}_{{y}_{ij}}$ , as follows:
\[
R_{y_{ij}} = \begin{cases} \mathbb{Q}[x_{y_{ij}}, \bar{x}_{{y}_{ij}}] & \mbox{if} \ m_{ij} = \infty\\
\mathbb{Q}[x_{y_{ij}}, \bar{x}_{{y}_{ij}}]/\langle C_{m_{ij}}(x_{y_{ij}}\bar{x}_{y_{ij}}), C_{m_{ij}}(\bar{x}_{y_{ij}}x_{y_{ij}})  \rangle & \mbox{if} \ m_{ij} < \infty\end{cases}
\]

\begin{corollary}\label{cor} Let $\Gamma_{W, S}$ be a  graph  associated to a Coxeter system $(W, S)$  with generators $S = \{s_1, s_2, \dots , s_N\}$ and Coxeter matrix $(m_{ij})_{\{1 \leq i, j \leq N\}}$.
 Let $R_{y_{ij}}$ be the ring associated to the edge 
$y_{ij} = [s_is_j] \in Y_{W, S, +}$. Then we have the following:
\[R_{y_{ij}} \cong \begin{cases}
 \mathbb{Q}[x_{y_{ij}}, \bar{x}_{y_{ij}}] &\mbox{if} \  m_{ij} = \infty\\
 \mathbb{Q}[x_{y_{ij}}, x_{y_{ij}}^{-1}]\ast_{\mathbb{Q}}K &\mbox{if} \ m_{ij} < \infty
 \end{cases},
 \]
 where $ \mathbb{Q}[x_{y_{ij}}, x_{y_{ij}}^{-1}]$ is the ring of Laurent polynomials in $x_{y_{ij}}$ over $\mathbb{Q}$ and $K \cong \mathbb{Q}[t]/\langle C_{m_{ij}}(t)\rangle$ is a field extension of $\mathbb{Q}$. 
Furthermore, $R_{y_{ij}}$ is a domain for all $y_{ij}\in Y_{W, S, +}$. 
\end{corollary}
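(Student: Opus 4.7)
The plan is to split into the two cases of the corollary and reduce each to results already proved in the paper (principally Theorem~\ref{polyrings} and Theorem~\ref{zerodiv}).

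First, in the case $m_{ij}=\infty$, the ring $R_{y_{ij}}$ is by definition the free associative $\mathbb{Q}$-algebra $\mathbb{Q}[x_{y_{ij}},\bar x_{y_{ij}}]$ on two non-commuting generators, so the asserted isomorphism is trivial. For the domain statement, I would observe that this ring can be written as a free product $\mathbb{Q}[x_{y_{ij}}]\ast_{\mathbb{Q}}\mathbb{Q}[\bar x_{y_{ij}}]$ over the field $\mathbb{Q}$; this free product exists by Cohn's result (existence over a field, cited after Theorem~\ref{cohn}), and since the factors are polynomial rings over $\mathbb{Q}$, they are themselves domains, so Theorem~\ref{zerodiv} yields that $R_{y_{ij}}$ is a domain.

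In the case $3\leq m_{ij}<\infty$, the plan is to invoke Theorem~\ref{polyrings} directly with $f(t)=C_{m_{ij}}(t)$. For this I need to verify the three hypotheses on $f$: (i) $f$ is non-constant, which is immediate since $\deg C_{m_{ij}}=\varphi(m_{ij})/2\geq 1$ for $m_{ij}\geq 3$; (ii) $f$ has non-zero constant term, because $4\cos^{2}\frac{\pi}{m_{ij}}$ is a non-zero algebraic integer (as $\cos\frac{\pi}{m_{ij}}\neq 0$ for $m_{ij}\geq 3$), and the constant term of its minimal polynomial is $\pm$ the product of its Galois conjugates, which is therefore non-zero; (iii) $f$ is irreducible, by definition of the minimal polynomial. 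Applying Theorem~\ref{polyrings} then yields both the isomorphism $R_{y_{ij}}\cong \mathbb{Q}[x_{y_{ij}},x_{y_{ij}}^{-1}]\ast_{\mathbb{Q}}K$ with $K=\mathbb{Q}[t]/\langle C_{m_{ij}}(t)\rangle$, and the statement that $R_{y_{ij}}$ is a domain (the latter being the ``furthermore'' clause of that theorem).

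There is no genuine obstacle: the corollary is essentially a packaging statement that bundles the specialization of Theorem~\ref{polyrings} to $f=C_{m_{ij}}$ together with the easy $m_{ij}=\infty$ case. The only verification with any content is checking that $C_{m_{ij}}(0)\neq 0$, for which the explicit list $C_3(t)=t-1$, $C_4(t)=t-2$, $C_5(t)=t^2-3t+1$, $C_6(t)=t-3$ cited in \S\ref{sec5} is already illustrative, and the general assertion follows from the fact that $4\cos^{2}\frac{\pi}{m}$ is a non-zero algebraic integer for each $m\geq 3$.
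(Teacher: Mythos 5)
Your proof is correct, and for the case $3\leq m_{ij}<\infty$ it follows the paper's route exactly: specialize Theorem~\ref{polyrings} to $f=C_{m_{ij}}$. Your explicit verification of the hypotheses (non-constant, non-zero constant term, irreducible) is a helpful supplement to the paper's terser ``follows directly,'' and your justification that $C_{m_{ij}}(0)\neq 0$ via $4\cos^2\frac{\pi}{m_{ij}}$ being a non-zero algebraic integer is sound.

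The one place you diverge is the $m_{ij}=\infty$ case. The paper disposes of the domain statement for the free algebra $\mathbb{Q}[x_{y_{ij}},\bar x_{y_{ij}}]$ by citing an external textbook reference (Cohn, \emph{Introduction to Ring Theory}, \S 5.3), whereas you derive it internally by writing the free algebra as the free product $\mathbb{Q}[x_{y_{ij}}]\ast_{\mathbb{Q}}\mathbb{Q}[\bar x_{y_{ij}}]$ of two (commutative) polynomial rings over $\mathbb{Q}$ and invoking Theorem~\ref{zerodiv}. Both arguments are valid; yours has the modest advantage of reusing machinery already set up in Section~\ref{fpr} rather than appealing to an outside source, at the cost of needing to recognize the free-product decomposition of the free algebra. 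There is no gap in either version.
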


\begin{proof} This follows directly from Theorem \ref{polyrings} and the fact that the ring $\mathbb{Q}[x_{y_{ij}}, \bar{x}_{y_{ij}}]$ in the non-commuting variables $x_{y_{ij}}$ and  $\bar{x}_{y_{ij}}$ is a domain (see, for example, 
Cohn \cite{Cohnund}, Section 5.3).
\end{proof}

 \section{A faithful Representation of  $\hat{R}_{W, S}$.}\label{imbed}
 In this section, we show that the  algebra  $\hat{R}_{W, S}$(defined in Section \ref{sec5}) associated to a finite  rank Coxeter system  is isomorphic to  a subring of a matrix ring over a non-commutative ring.

 \subsection{} Throughout the section   $\Gamma_{W, S}$ denotes  a graph    associated  to  a Coxeter system  $(W, S)$ with generators $S = \{s_1, \dots , s_N\}$ and Coxeter matrix $(m_{ij})_{\{1 \leq i, j \leq N\}}$.
We fix an orientation, $Y_{W, S} =  Y_{W, S, +} \bigcupdot \overline{Y_{W, S, +}}$,  for our graph $\Gamma_{W, S}$. 
We will show that the associated algebra $\hat{R}_{W, S}$ can be imbedded as a $\mathbb{Q}$ algebra into 
a matrix ring $M_{N+1}(Q)$, where $Q$ is a quotient of a non-commutative polynomial ring over $\mathbb{Q}$ in several variables. This is turn will give us some insight into the zero divisors in $\hat{R}_{W, S}$ and its subring $R_{W, S}$. 

\subsection{A Graph Extension} Let $(W^{\bullet}, S^{\bullet})$ denote the Coxeter system with Coxeter generators 
$S^{\bullet}$
 $= \{s_1,  \dots  ,  s_N,   s_{N + 1}\}$ and Coxeter matrix $(m^{\bullet}_{ij})_{1 \leq i, j \leq N+1}$, such that $m_{ij}^{\bullet} = m_{ij}, 1\leq i, j\leq N$,  $m^{\bullet}_{\underline{N+1} i} = m^{\bullet}_{i \underline{N+1}} = 3, 1\leq i \leq N$ and $m^{\bullet}_{\underline{N+1}\  \underline{N+1}} = 1$. (We denote $N+1$ by $\underline{N+1}$ in subscripts where  confusion  might be caused by ambiguity.)
 Since $W$ can be identified with  the  parabolic subgroup  $W^{\bullet}_{[N]}$ of $W^{\bullet}$ corresponding to the Coxeter system $(W^{\bullet}_{[N]}, S^{\bullet}_{[N]})$, by Lemma \ref{impar}, we have an inclusion of the graph $\Gamma_{W, S}$ in the graph $\Gamma_{W^{\bullet}, S^{\bullet}}$ and  imbeddings of the associated non-unital $\mathbb{Q}$ algebras  ${i}_{[N], [N + 1]}: \hat{P}_{W, S} \to  \hat{P}_{W^{\bullet}, S^{\bullet}}$
 and 
 $\tilde{i}_{[N], [N + 1]}: \hat{R}_{W, S} \to  \hat{R}_{W^{\bullet}, S^{\bullet}}$. For simplicity of notation, we will identify the vertices and edges of 
 the graph $\Gamma_{W, S}$ with their images in the graph of $\Gamma_{W^{\bullet}, S^{\bullet}}$. We can extend the orientation of $\Gamma_{W, S}$
 to an orientation  for $\Gamma_{W^{\bullet}, S^{\bullet}}$ by letting $ Y_{W^{\bullet}, S^{\bullet}, +} =  Y_{W, S, +} \bigcupdot \{y_{\underline{N+1} i} | 1\leq i \leq N\}$.

  \begin{exmp} For our running example, 
 we let $S^{\bullet}= \{r, s, t, u, v, w \}$ with new vertex $w$. 
 We show the positively oriented edges, $Y_{W^{\bullet}, S^{\bullet}, +}$, of the graph 
 $\Gamma_{W^{\bullet}, S^{\bullet}, +}$ below.
 The vertices and edges in   $\Gamma_{W^{\bullet}, S^{\bullet}}\backslash \Gamma_{W, S}$ are shown in blue. Alongside, we show the new vertex and new geometric edges  of $\Gamma_{W^{\bullet}, S^{\bullet}}$  with their  labels in blue.
 \[
\xymatrix{
&&&{\color{blue} w}\ar@{->}@[blue][dlll]\ar@{->}@[blue][dl]\ar@{->}@[blue][dr]\ar@{->}@[blue][ddl]\ar@/_2pc/@[blue][ddlll]\\
s\ar@{->}[rr]^{y_{\epsilon}}&&t\ar@{->}[rr]^{y_\zeta}&&{v}&&\\
{r}\ar@{->}[u]^{y_\alpha}\ar@{->}[rr]_{y_\gamma}&&{u}\ar@{->}[u]^{y_\beta}\ar@{->}[urr]_{y_\delta}&&\\
}
\quad
\xymatrix{
&&&{\color{blue} w}\ar@{-}@[blue][dlll]^{\color{blue}3}\ar@{-}@[blue][dl]_(.8){\color{blue}3}\ar@{-}@[blue][dr]^{\color{blue}3}\ar@{-}@[blue][ddl]^(.6){\color{blue}3}\ar@{-}@/_2pc/@[blue][ddlll]_(.3){\color{blue}3}\\
s\ar@{-}[rr]_{5}&&t\ar@{-}[rr]_(.6){5}&&{v}&&\\
{r}\ar@{-}[u]^{3}\ar@{-}[rr]_{4}&&{u}\ar@{-}[u]^{6}\ar@{-}[urr]_{\infty}&&\\
}
\]
 \end{exmp}

 \subsection{Isomorphism of $\hat{R}_{W^{\bullet}, S^{\bullet}}$ and a Matrix Ring} 
 For $y \in Y_{W, S, +}$, let $R_y$ be the associated ring described at the end of  Section \ref{fpr}. 
 By Definition \ref{deffpr}, the free product  $\left\{\ast_{\mathbb{Q}}R_y\right\}_{y \in Y_{W, S, +}}$
 is isomorphic to the quotient ring, $Q$,  of the polynomial ring in the non-commuting variables $\left\{x_y, \bar{x}_y  | \ y  \in Y_{W, S, +}\right\}$;
 $$Q = \mathbb{Q}[\left\{x_{y_{ij}}, \bar{x}_{y_{ij}}\right\}_{y_{ij}  \in Y_{W, S, +}}]/\left\langle \left\{C_{m_{ij}}(x_{y_{ij}}\bar{x}_{y_{ij}}), C_{m_{ij}}(\bar{x}_{y_{ij}}x_{y_{ij}})\right\}_{\left\{ y_{ij}  \in Y_{W, S, +} | \ m_{ij} < \infty\right\}} \right\rangle.$$ 
 By Theorem \ref{zerodiv} and Corollary \ref{cor}, $Q$ is a domain.

 We will denote the coset, $p + \hat{I}_{W^{\bullet}, S^{\bullet}}$, of $p \in \hat{P}_{W^{\bullet}, S^{\bullet}}$ in $\hat{R}_{W^{\bullet}, S^{\bullet}}$ 
  by $p^{\sim}$ in what follows. We first single out some elements of $\hat{R}_{W^{\bullet}, S^{\bullet}}$ which will play a central role  in defining a homomorphism from $\hat{R}_{W^{\bullet}, S^{\bullet}}$ to $M_{N + 1}(Q)$. 
 For $1 \leq i \leq N + 1$, we let 
 \begin{equation}
 P_i^j = \begin{cases}
 [s_is_{N+1}s_j]^{\sim} = (y_{i \underline{N+1}}y_{\underline{N + 1}  j})^{\sim} &\mbox{if} \ 1 \leq i, j \leq N\\
 [s_is_{N+1}]^{\sim} = (y_{i \underline{N+1}})^{\sim} &\mbox{if} \ 1 \leq i \leq N,  j = N + 1\\
 [s_{N + 1}s_j]^{\sim} =  y_{\underline{N+1} j}^{\sim} &\mbox{if} \   i = N + 1, 1 \leq j \leq N\\
 [s_{N + 1}]^{\sim} &\mbox{if}\  i = j = N + 1\\
 \end{cases}
 \end{equation}
 
\begin{note} In what follows we use the notation $\dot{o}(y)$ and $\dot{t}(y)$ to denote the index of $o(y)$ and $t(y)$ respectively in the set $[N + 1]$, 
for $y \in Y_{W^{\bullet}, S^{\bullet}}$, i.e. if $y = [s_i, s_j]$, then $\dot{o}(y) = i$ and $\dot{t}(y) = j$. 
\end{note}
 
  \begin{lemma}\label{A} We have the following 
identities  in $\hat{R}_{W^{\bullet}, S^{\bullet}}$, where $s_i, s_j, s_k, s_l \in S^{\bullet}$:
$$\begin{aligned}
    P_{i}^{i} &= [s_i]^{\sim}\\
     P_{i}^{j}P_{k}^{l} &= 0^{\sim} \  \mbox{if} \  j \not= k \\
      P_{i}^{j}P_{k}^{l}  &=   P_{i}^{l}   \  \mbox{if} \ j = k \\
     \end{aligned}$$
 \end{lemma}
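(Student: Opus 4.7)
The plan is to verify the three identities directly from the path algebra multiplication rule and the defining relations of $\hat{I}_{W^{\bullet},S^{\bullet}}$, using the crucial fact that every edge incident to $s_{N+1}$ has Coxeter label $m^{\bullet}_{i,\underline{N+1}}=3$, so $C_{3}(t)=t-1$ applies.

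First I would dispose of the identity $P_i^i=[s_i]^{\sim}$. For $i=N+1$ this is by definition. For $1\le i\le N$, we have $P_i^i=[s_is_{N+1}s_i]^{\sim}=(y_{i,\underline{N+1}}\bar{y}_{i,\underline{N+1}})^{\sim}$. Since $m^{\bullet}_{i,\underline{N+1}}=3$, the generator $C_{y_{i,\underline{N+1}}}=y_{i,\underline{N+1}}\bar y_{i,\underline{N+1}}-[s_i]$ of $\hat{I}_{W^{\bullet},S^{\bullet}}$ shows exactly that $[s_is_{N+1}s_i]^{\sim}=[s_i]^{\sim}$.

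Next, for the vanishing identity $P_i^jP_k^l=0$ when $j\neq k$: I would simply apply the path algebra multiplication rule from Section \ref{palgs}. Each $P_a^b$ is (the class of) a single path whose terminus is $s_b$ (respectively origin $s_a$), and concatenation of two paths is zero in $P_{W^{\bullet},S^{\bullet}}$ (hence in $\hat{R}_{W^{\bullet},S^{\bullet}}$) whenever the terminus of the first disagrees with the origin of the second. I would need to check this across the four sub-cases depending on whether $j,k\in[N]$ or equal $N+1$, but in every sub-case the terminus of the representing path of $P_i^j$ is $s_j$ and the origin of the representing path of $P_k^l$ is $s_k$, so $j\neq k$ forces the product to vanish already in $\hat{P}_{W^{\bullet},S^{\bullet}}$.

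Finally, for $P_i^jP_k^l=P_i^l$ when $j=k$: I would concatenate the representing paths (which is legitimate since the endpoints match) to get a single path in $\hat{P}_{W^{\bullet},S^{\bullet}}$ and then reduce modulo $\hat I_{W^{\bullet},S^{\bullet}}$. The generic case $i,j,l\in[N]$ (with $j=k$) yields
\[
P_i^jP_j^l=[s_is_{N+1}s_j][s_js_{N+1}s_l]=[s_is_{N+1}s_js_{N+1}s_l],
\]
and the associativity of multiplication combined with $[s_{N+1}s_js_{N+1}]^{\sim}=[s_{N+1}]^{\sim}$ (from $m^{\bullet}_{j,\underline{N+1}}=3$ again) reduces this modulo $\hat I_{W^{\bullet},S^{\bullet}}$ to $[s_is_{N+1}s_l]^{\sim}=P_i^l$. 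The degenerate sub-cases (when one or more of $i,j,k,l$ equal $N+1$) are handled the same way, with some middle vertex $s_{N+1}s_?s_{N+1}$ collapsed via the same $C_3$ relation, or else without needing any relation at all (e.g.\ $P_{N+1}^{N+1}P_{N+1}^l=[s_{N+1}][s_{N+1}s_l]^{\sim}=P_{N+1}^l$ directly).

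The argument is essentially mechanical; the only mild obstacle is a tidy case analysis on how many of the four indices $i,j,k,l$ equal $N+1$, but each case is resolved either by the path concatenation rule alone or by one application of the relation $[s_{N+1}s_as_{N+1}]^{\sim}=[s_{N+1}]^{\sim}$ coming from the Coxeter label $3$ on every edge at $s_{N+1}$.
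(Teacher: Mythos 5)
Your proof is correct and follows essentially the same route as the paper's: record once that each geometric edge incident to $s_{N+1}$ has label $3$ so that $C_{3}(t)=t-1$ gives $[s_as_{N+1}s_a]^{\sim}=[s_a]^{\sim}$ and $[s_{N+1}s_as_{N+1}]^{\sim}=[s_{N+1}]^{\sim}$, then verify the three identities by path concatenation plus one application of this relation. The paper treats only the generic sub-case $1\le i,j,k\le N$ explicitly and leaves the rest to the reader, whereas you sketch the remaining sub-cases, but the argument is the same.
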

 
 \begin{proof}Note that  $C_{y_{ij}}^{\sim} = [s_is_js_i]^{\sim}-[s_i]^{\sim} = 0^{\sim}$ if either $i$ or $j$ are equal to $N + 1$. 
  Hence, if $i \not= N + 1$, then  $P_i^i = [s_is_{N + 1}s_i]^{\sim} = [s_i]^{\sim}$.
 If $i = N+1$, then $P_i^i = [s_{N + 1}]^{\sim}$. This proves the first identity.

 Consider now   $P_{i}^{j}P_{k}^{l}$. It is clear that the product is $0^{\sim}$ if $j \not= k$. 
 Suppose that $1 \leq i, j, k \leq N$, then 
 $$P_i^jP_j^k = ([s_is_{N+1}s_j][s_js_{N + 1}s_k])^{\sim} = ([s_is_{N + 1}][s_{N + 1}s_js_{N + 1}][s_{N + 1}s_k])^{\sim} = [s_is_{N + 1}s_k]^{\sim} = P_i^k$$
 by our opening remark. The other cases follow similarly and are left to the reader.
 \end{proof}

\subsection{} For each $i \in \{1, 2, \dots , N + 1\}$, we use the universal mapping property of the polynomial ring to define a ring homomorphism  $\hat{\psi}_i: \mathbb{Q}[\{x_y, \bar{x}_y\}_{y \in Y_{W, S, +}}] \to [s_i]^{\sim}\hat{R}_{W^{\bullet}, S^{\bullet}}[s_i]^{\sim} \subset \hat{R}_{W^{\bullet}, S^{\bullet}}$, such that 
$\hat{\psi}_i(1_Q) = [s_i]^{\sim}$, 
$\hat{\psi}_i(x_y) = P_i^{\dot{o}(y)}y^{\sim}P_{\dot{t}(y)}^i$  and $\hat{\psi}_i(\bar{x}_y) = P_i^{\dot{o}(\bar{y})}\bar{y}^{\sim}P_{\dot{t}(\bar{y})}^i$ for $y \in Y_{W, S,+}$.  Since $o(\bar{y}) = t(y)$ and $t(\bar{y}) = o(y)$, we have 
$$P_{i}^{\dot{o}(y)}y^{\sim}P_{\dot{t}(y)}^iP_i^{\dot{o}(\bar{y})}\bar{y}^{\sim}P_{\dot{t}(\bar{y})}^{i} = P_{i}^{\dot{o}(y)}(y\bar{y})^{\sim}P_{\dot{t}(\bar{y})}^{i}.$$
  Hence   $\hat{\psi}_i((x_y\bar{x}_y)^k) = P_{i}^{\dot{o}(y)}\left((y\bar{y})^k\right)^{\sim}P_{\dot{t}(\bar{y})}^{i}$, by Lemma \ref{A} .
Thus for $y_{ij} \in Y_{W, S, +}$, $m_{ij} < \infty$, we have  
$$ \hat{\psi}_i\left(C_{m_{ij}}(x_{y_{ij}}\bar{x}_{y_{ij}})\right) = C_{m_{ij}}\left(P_i^{\dot{o}(y)}(y\bar{y}\right)^{\sim}P_{\dot{t}(\bar{y})}^i) = P_i^{\dot{o}(y)}C_{m_{ij}}\left((y\bar{y})^{\sim}\right)P_{\dot{t}(\bar{y})}^i = 0^{\sim}$$
 Therefore  the homomorphism $\hat{\psi}_i$ gives us a well defined quotient homomorphism $$ \psi_i: Q \to [s_i]^{\sim}\hat{R}_{W^{\bullet}, S^{\bullet}}[s_i]^{\sim} \subset \hat{R}_{W^{\bullet}, S^{\bullet}}$$  acting as follows on the generators of $Q$; 
 $$
 \begin{aligned}
 \psi_i(1_Q) &= [s_i]^{\sim}&  \\
 {\psi}_i(x_y) &= P_i^{\dot{o}(y)}y^{\sim}P_{\dot{t}(y)}^i,  & y \in Y_{W, S,+}.\\
 {\psi}_i(\bar{x}_y) &= P_i^{\dot{o}(\bar{y})}\bar{y}^{\sim}P_{\dot{t}(\bar{y})}^i, & y \in Y_{W, S,+}.\\
 \end{aligned}
 $$
 \begin{lemma}\label{Pnphi} Let   $\psi_i:Q \to [s_i]^{\sim}\hat{R}_{W^{\bullet}, S^{\bullet}}[s_i]^{\sim} \subset  \hat{R}_{W^{\bullet}, S^{\bullet}}$ and $\psi_j :Q \to [s_j]^{\sim}\hat{R}_{W^{\bullet}, S^{\bullet}}[s_j]^{\sim} \subset  \hat{R}_{W^{\bullet}, S^{\bullet}}$,  \ $1 \leq i, j \leq N + 1$ be as  defined above. We have $P_j^i\psi_i(q)P_i^j = \psi_j(q)$ for all $q \in Q, 1 \leq i, j \leq N + 1$. 
 \end{lemma}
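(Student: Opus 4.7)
The plan is to show that the map $\phi\colon Q\to \hat R_{W^\bullet,S^\bullet}$ defined by $\phi(q):=P_j^i\psi_i(q)P_i^j$ is a ring homomorphism into $[s_j]^\sim \hat R_{W^\bullet,S^\bullet}[s_j]^\sim$ which agrees with $\psi_j$ on a generating set of $Q$; the conclusion will then follow since $\psi_j$ is determined by its values on generators.

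First I would note the key idempotency identity $P_i^j P_j^i = P_i^i = [s_i]^\sim$ (and symmetrically $P_j^i P_i^j = [s_j]^\sim$), both instances of Lemma \ref{A}. Since $\psi_i$ has image in $[s_i]^\sim \hat R_{W^\bullet,S^\bullet}[s_i]^\sim$, multiplication by $[s_i]^\sim = P_i^j P_j^i$ acts as the identity on $\mathrm{Img}(\psi_i)$. This lets me compute
\[\phi(q_1 q_2)=P_j^i\psi_i(q_1)\psi_i(q_2)P_i^j = P_j^i\psi_i(q_1)\,[s_i]^\sim\psi_i(q_2)P_i^j = P_j^i\psi_i(q_1)P_i^j P_j^i\psi_i(q_2)P_i^j=\phi(q_1)\phi(q_2),\]
and $\phi(1_Q)=P_j^i[s_i]^\sim P_i^j=P_j^i P_i^j=[s_j]^\sim$. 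Hence $\phi$ is a $\mathbb{Q}$-algebra homomorphism into $[s_j]^\sim \hat R_{W^\bullet,S^\bullet}[s_j]^\sim$ ($\mathbb{Q}$-linearity is inherited from $\psi_i$).

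Next I would check that $\phi$ and $\psi_j$ agree on the generators $1_Q$, $x_y$, and $\bar x_y$ of $Q$ (for $y\in Y_{W,S,+}$). The unit has already been handled. For $x_y$, using Lemma \ref{A} to collapse the products $P_j^i P_i^{\dot o(y)}=P_j^{\dot o(y)}$ and $P_{\dot t(y)}^i P_i^j = P_{\dot t(y)}^j$, I get
\[\phi(x_y)=P_j^i\bigl(P_i^{\dot o(y)}\,y^\sim\,P_{\dot t(y)}^i\bigr)P_i^j = P_j^{\dot o(y)}\,y^\sim\,P_{\dot t(y)}^j = \psi_j(x_y),\]
and the identical computation (with $y$ replaced by $\bar y$) gives $\phi(\bar x_y)=\psi_j(\bar x_y)$.

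Since $\phi$ and $\psi_j$ are both $\mathbb{Q}$-algebra homomorphisms out of $Q$ and agree on the generating set $\{1_Q\}\cup\{x_y,\bar x_y\mid y\in Y_{W,S,+}\}$, they coincide on all of $Q$. This proves the identity $P_j^i\psi_i(q)P_i^j=\psi_j(q)$ for every $q\in Q$. There is no real obstacle here—the only subtle point is verifying that $\phi$ is multiplicative, and this reduces entirely to the idempotent identity $P_i^j P_j^i=[s_i]^\sim$ from Lemma \ref{A} together with the fact that $\mathrm{Img}(\psi_i)$ lies in the corner ring $[s_i]^\sim \hat R_{W^\bullet,S^\bullet}[s_i]^\sim$.
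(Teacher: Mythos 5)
Your proof is correct and follows essentially the same route as the paper: reduce to checking the identity on the generators $1_Q$, $x_y$, $\bar x_y$ of $Q$ and then collapse the products $P_j^i P_i^k = P_j^k$ via Lemma \ref{A}. The one thing you do that the paper leaves implicit is verify that $q\mapsto P_j^i\psi_i(q)P_i^j$ is multiplicative (using $P_i^jP_j^i=[s_i]^\sim$ together with $\mathrm{Img}(\psi_i)\subseteq [s_i]^\sim\hat R_{W^\bullet,S^\bullet}[s_i]^\sim$); this is exactly what is needed to justify the paper's opening claim that ``it suffices to show equality on the generators,'' since those elements generate $Q$ as an algebra, not as a $\mathbb{Q}$-vector space. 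So your write-up is slightly more careful on that point, but the substance is the same.
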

 \begin{proof}It suffices to show equality on the generators $1_Q, x_y, \bar{x}_y, y \in Y_{W, S, +}$ of $Q$. 
 Using the identities in  Lemma \ref{A}, we get:
 $$P_j^i\psi_i(1_Q)P_i^j = P_j^i[s_i]^{\sim}P_i^j = P_j^iP_i^j = P_j^j = [s_j]^{\sim} = \psi_j(1_Q),$$
 $$P_j^i\psi_i(x_y)P_i^j  = P_j^iP_i^{\dot{o}(y)}y^{\sim}P_{\dot{t}(y)}^iP_i^j = P_j^{\dot{o}(y)}y^{\sim}P_{\dot{t}(y)}^j = \psi_j(x_y),$$
 and 
  $$P_j^i\psi_i(\bar{x}_y)P_i^j  = P_j^iP_i^{\dot{o}(\bar{y})}\bar{y}^{\sim}P_{\dot{t}(\bar{y})}^iP_i^j = P_j^{\dot{o}(\bar{y})}\bar{y}^{\sim}P_{\dot{t}(\bar{y})}^j = \psi_j(\bar{x}_y).$$
  This proves the lemma.
  \end{proof}

\subsection{}  We let $\Psi : M_{N + 1}(Q) \to \hat{R}_{W^{\bullet}, S^{\bullet}}$ be the homomorphism of $\mathbb{Q}$ vector spaces which acts on  
 basis elements of $M_{N + 1}(Q)$  of the form $qe_{ij}$ 
 as follows:
 $$\Psi(qe_{ij}) =\psi_i(q)P_i^j,$$
  where $q \in Q$ and $e_{ij}$ is the $(N + 1) \times (N + 1)$ matrix with $1$ in the $(i, j)$ position and zeros elsewhere.
  
 \begin{lemma}  $\Psi$ is a ring homomorphism. 
 \end{lemma}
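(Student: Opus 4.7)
The plan is to verify the ring homomorphism property by checking it on $\mathbb{Q}$-basis elements of the form $q e_{ij}$ ($q\in Q$, $1\le i,j\le N+1$), since additivity of $\Psi$ is built into the definition and multiplication in $M_{N+1}(Q)$ is $\mathbb{Q}$-bilinear. The main task is therefore to show
\[
\Psi\bigl((qe_{ij})(q'e_{kl})\bigr)=\Psi(qe_{ij})\,\Psi(q'e_{kl})
\]
for all $q,q'\in Q$ and all $i,j,k,l\in [N+1]$, and this naturally splits into the cases $j\neq k$ and $j=k$.

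For the case $j\neq k$, the left side is $\Psi(0)=0$ since $e_{ij}e_{kl}=0$, and the right side is $\psi_i(q)P_i^{j}\psi_k(q')P_k^{l}$. Here I would use that by construction $\psi_i(q)\in [s_i]^{\sim}\hat R_{W^\bullet,S^\bullet}[s_i]^{\sim}$ and (directly from the definitions of the $P_a^b$) $P_i^{j}=[s_i]^{\sim}P_i^{j}[s_j]^{\sim}$ and $P_k^{l}=[s_k]^{\sim}P_k^{l}[s_l]^{\sim}$, so that $P_i^{j}\psi_k(q')$ contains the factor $[s_j]^{\sim}[s_k]^{\sim}=0$; this is essentially the second identity of Lemma \ref{A} applied after absorbing the idempotents.

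For the case $j=k$, the left side is $\Psi(qq'\,e_{il})=\psi_i(qq')P_i^{l}$, and the right side is $\psi_i(q)P_i^{j}\psi_j(q')P_j^{l}$. The key trick is to use Lemma \ref{Pnphi} (with the roles of $i$ and $j$ exchanged) to rewrite
\[
P_i^{j}\psi_j(q')=\psi_i(q')\,P_i^{j},
\]
which one obtains by inserting $P_j^{j}=P_j^{i}P_i^{j}$ (Lemma \ref{A}) and then recognizing $P_i^{j}\psi_j(q')P_j^{i}=\psi_i(q')$. After this intertwining move, the right side becomes $\psi_i(q)\psi_i(q')P_i^{j}P_j^{l}$, and two final applications of the fact that $\psi_i$ is a ring homomorphism and of the composition rule $P_i^{j}P_j^{l}=P_i^{l}$ from Lemma \ref{A} collapse it to $\psi_i(qq')P_i^{l}$, matching the left side.

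The only nontrivial step is the intertwining identity $P_i^{j}\psi_j(q')=\psi_i(q')P_i^{j}$, which is where all the structure accumulated in Lemmas \ref{A} and \ref{Pnphi} is used; everything else is bookkeeping with idempotents. One should also check compatibility with the unit, namely $\Psi(\sum_i 1_Q\,e_{ii})=\sum_i \psi_i(1_Q)P_i^{i}=\sum_i [s_i]^{\sim}=1_{\hat R_{W^\bullet,S^\bullet}}$, which is immediate from $\psi_i(1_Q)=[s_i]^{\sim}$ and $P_i^{i}=[s_i]^{\sim}$ (Lemma \ref{A}), so $\Psi$ is unital if $S^\bullet$ is viewed as finite (as is the case here).
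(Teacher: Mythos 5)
Your proof is correct and follows essentially the same route as the paper's: reduce to basis elements $q e_{ij}$, split into $j\neq k$ and $j=k$, and in the latter case use Lemma \ref{A} to absorb/insert the idempotents $P_i^j P_j^i$ and Lemma \ref{Pnphi} to convert $P_i^j\psi_j(q')P_j^i$ into $\psi_i(q')$. The paper runs the chain of equalities from $\Psi(q_1q_2e_{il})$ forward to $\Psi(q_1e_{ij})\Psi(q_2e_{jl})$ by inserting $P_i^jP_j^i$; you work from the product side and package the same two lemma applications into the clean intertwining identity $P_i^j\psi_j(q')=\psi_i(q')P_i^j$, which is a nice way to isolate the essential step. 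Your explicit idempotent argument for the $j\neq k$ case and the unitality check are slightly more detailed than the paper's "it is easy to see," but there is no substantive difference in approach.
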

 \begin{proof}To verify this, we need only show that $\Psi((q_1e_{ij})(q_2e_{kl})) = \Psi(q_1e_{ij})\Psi(q_2e_{kl})$ for $q_1, q_2 \in Q$ and $1 \leq i, j, k, l \leq N + 1$. 
 If $j \not= k$, it is easy to see that both sides are zero and are thus equal. If $j = k$, then,  using Lemma \ref{Pnphi}, we get 
 $$
 \begin{aligned}
 \Psi((q_1e_{ij})(q_2e_{jl})) &= \Psi(q_1q_2e_{il}) = \psi_i(q_1)\psi_i(q_2)P_i^l \\
 &= \psi_i(q_1)P_i^jP_j^i\psi_i(q_2)P_i^jP_j^iP_i^l\\
 &= \psi_i(q_1)P_i^j\psi_j(q_2)P_j^l\\
  &= \Psi(q_1e_{ij})\Psi(q_2e_{jl}).
  \end{aligned}
 $$
 \end{proof}

 We show that both rings are isomorphic by showing that $\Psi$ has an inverse. We define a homomorphism,
 $$\phi: \hat{P}_{W^{\bullet}, S^{\bullet}} \to M_{N + 1}(Q),$$ 
 which acts as follows on the vertices and edges of $\Gamma_{W^{\bullet}, S^{\bullet}}$ as follows:
 $$
 \begin{aligned}
\phi([s_i]) &= e_{ii}, &1 \leq i \leq N + 1,\\
\phi(y_{ij}) &= x_{y_{ij}}e_{ij}, &1 \leq i \not= j \leq N, y_{ij} \in Y_{W^{\bullet}, S^{\bullet}, +}\\
\phi(y_{ij}) &= \bar{x}_{\bar{y}_{ij}}e_{ij}, &1 \leq i \not= j \leq N, y_{ij} \in \overline{Y_{W^{\bullet}, S^{\bullet}, +}}\\
\phi(y_{ij}) &= e_{ij},     &N+1 \in \{i, j\},  i \not=j.\\
    \end{aligned}
    $$
    The homomorphism extends to a  well defined homomorphism on $\hat{P}_{W^{\bullet}, S^{\bullet}}$ since 
    $\phi$ respects the multiplicative relations on the basis of paths $\mathfrak {P}_{W^{\bullet}, S^{\bullet}}$ in $\hat{P}_{W^{\bullet}, S^{\bullet}}$ namely:
    $$
    \begin{aligned}
    \phi(y_{ij})\phi(y_{kl}) &= \phi(0) = 0 &\mbox{if} \ j\not= k, y_{ij}, y_{kl} \in Y_{W^{\bullet}, S^{\bullet}}.\\
    \phi(y_{ij})\phi([s_k]) = \phi([s_k])\phi(y_{ji}) &= \phi(0) = 0 &\mbox{if} \ j\not= k, y_{ij} \in Y_{W^{\bullet}, S^{\bullet}}. \\
    \phi(y_{ij})\phi([s_j]) = \phi([s_i])\phi(y_{ij}) &= \phi(y_{ij})   &\mbox{if} \   y_{ij} \in Y_{W^{\bullet}, S^{\bullet}}.\\
    \end{aligned}
    $$

    The following lemma shows that the quotient homomorphism 
    $\Phi : \hat{R}_{W^{\bullet}, S^{\bullet}}  \to M_{N + 1}(Q)$ given by 
    $$\Phi( p^{\sim}) = \phi(p), \ p \in \hat{P}_{W^{\bullet}, S^{\bullet}}$$
    is well defined:
    
    \begin{lemma} Let $\phi: \hat{P}_{W^{\bullet}, S^{\bullet}} \to M_{N + 1}(Q)$ be as defined above. Then, 
    $\hat{I}_{W^{\bullet}, S^{\bullet}} \subseteq \ker \phi$.
    \end{lemma}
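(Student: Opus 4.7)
The plan is to verify directly that every generator of $\hat{I}_{W^{\bullet}, S^{\bullet}}$ lies in $\ker\phi$; since $\ker\phi$ is a two-sided ideal, this is enough. Recall that $\hat{I}_{W^{\bullet}, S^{\bullet}}$ is generated by the elements $C_{y_{ij}} = C_{m^\bullet_{ij}}(y_{ij}\bar{y}_{ij})$ as $y_{ij}$ runs over edges of $\Gamma_{W^\bullet,S^\bullet}$ with $m^\bullet_{ij}<\infty$. These edges split into two types according to whether or not $s_{N+1}$ is an endpoint, and I will handle them separately.

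\textbf{Case 1: edges within $\Gamma_{W,S}$}, i.e.\ $1\leq i,j\leq N$ with $m_{ij}<\infty$. Up to replacing $y_{ij}$ by $\bar{y}_{ij}$ I may assume $y_{ij}\in Y_{W,S,+}$, so by definition $\phi(y_{ij})=x_{y_{ij}}e_{ij}$ and $\phi(\bar{y}_{ij})=\bar{x}_{y_{ij}}e_{ji}$. Multiplying matrix units,
\[
\phi(y_{ij}\bar{y}_{ij})=x_{y_{ij}}\bar{x}_{y_{ij}}\,e_{ii}.
\]
The element $C_{y_{ij}}$ lives in the non-unital subring $[s_i]\hat P_{W^\bullet,S^\bullet}[s_i]$ where $[s_i]$ acts as identity, and $\phi([s_i])=e_{ii}$, which is the identity of the subalgebra $e_{ii}M_{N+1}(Q)e_{ii}\cong Q$. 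Applying $\phi$ to the polynomial expression therefore yields
\[
\phi(C_{y_{ij}})=C_{m_{ij}}\!\bigl(x_{y_{ij}}\bar{x}_{y_{ij}}\bigr)\,e_{ii},
\]
and the coefficient vanishes in $Q$ by the defining relations $C_{m_{ij}}(x_{y_{ij}}\bar{x}_{y_{ij}})=0$. Hence $\phi(C_{y_{ij}})=0$.

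\textbf{Case 2: edges incident to $s_{N+1}$.} By construction $m^\bullet_{i,\underline{N+1}}=m^\bullet_{\underline{N+1},i}=3$ for all $1\leq i\leq N$, and $C_{3}(t)=t-1$. For such an edge $y$, by definition $\phi(y)=e_{\dot o(y),\dot t(y)}$ and $\phi(\bar y)=e_{\dot t(y),\dot o(y)}$, so
\[
\phi(y\bar y)=e_{\dot o(y),\dot o(y)}=\phi([s_{\dot o(y)}]),
\]
and thus $\phi(C_{y})=\phi([s_{\dot o(y)}s_{\dot t(y)}s_{\dot o(y)}])-\phi([s_{\dot o(y)}])=0$.

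Combining the two cases, every generator of $\hat{I}_{W^{\bullet}, S^{\bullet}}$ is sent to zero by $\phi$, so $\hat I_{W^\bullet,S^\bullet}\subseteq\ker\phi$ as required. The only subtle point is the bookkeeping in Case 1 to align the $\phi$-image of $y_{ij}\bar y_{ij}$ with the indeterminates used to define $Q$ (depending on which of $y_{ij},\bar y_{ij}$ is in the chosen orientation $Y_{W,S,+}$); but both possibilities produce one of the two defining relations of $Q$, so no difficulty arises.
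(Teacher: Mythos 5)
Your proposal is correct and follows essentially the same approach as the paper: verify directly that each generator $C_{y_{ij}}$ of $\hat{I}_{W^{\bullet},S^{\bullet}}$ is annihilated by $\phi$, splitting into edges with $1\leq i,j\leq N$ (where the defining relations $C_{m_{ij}}(x_{y_{ij}}\bar{x}_{y_{ij}})=0$ and $C_{m_{ij}}(\bar{x}_{y_{ij}}x_{y_{ij}})=0$ of $Q$ are invoked) and edges incident to $s_{N+1}$ (where $m^{\bullet}_{ij}=3$ and $C_3(t)=t-1$). The only stylistic difference is that the paper writes out $C_{y_{ij}}$ and $C_{\bar{y}_{ij}}$ as separate displayed computations, whereas you handle the two orientations by a symmetry remark at the end.
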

    \begin{proof}It suffices to prove that the generators of $\hat{I}_{W^{\bullet}, S^{\bullet}}$ are in $\ker \phi$.
    For $1 \leq i \not= j \leq N$, $y_{ij} \in Y_{W^{\bullet}, S^{\bullet}, +}$ we have 
     $$\phi(y_{ij}y_{ji}) = \phi(y_{ij})\phi(y_{ji}) = x_{y_{ij}}e_{ij}\bar{x}_{y_{ij}}e_{ji} = x_{y_{ij}}\bar{x}_{{y}_{ij}}e_{ii},$$
    and if $m_{ij} < \infty$, 
     $$\phi(C_{y_{ij}}) = C_{m_{ij}}(\phi(y_{ij})\phi(\bar{y}_{ij})) = C_{m_{ij}}(x_{y_{ij}}\bar{x}_{{y}_{ij}})e_{ii} = 0.$$
    Similarly, if $1 \leq i \not= j \leq N$, $y_{ij} \in {Y_{W^{\bullet}, S^{\bullet}, +}}$, $m_{ij} < \infty$, we have 
     $$\phi(C_{\bar{y}_{ij}}) = C_{m_{ij}}(\phi(\bar{y}_{ij})\phi(y_{ij})) = C_{m_{ij}}(\bar{x}_{{y}_{ij}}x_{y_{ij}})e_{jj} = 0.$$

    If $1 \leq i \leq N$ and $j = N +1$, we have $C_{y_{ij}} = y_{ij}\bar{y}_{ij} - [s_i]$ and 
    $$\phi(C_{y_{ij}})  =\phi(y_{ij})\phi(\bar{y}_{ij}) - e_{ii} = e_{i \underline{N + 1}}e_{\underline{N + 1} i} - e_{ii} = 0.$$
    Likewise, if $i= N +1$ and $1 \leq j \leq N$, we have 
    $$\phi(C_{y_{ij}})  = e_{\underline{N + 1} j}e_{j \underline{N + 1}} - e_{\underline{N+1}\  \underline{N+1}} = 0.$$
   Thus all generators of $\hat{I}_{W^{\bullet}, S^{\bullet}}$ are in $\ker \phi$ and hence $\phi(\hat{I}_{W^{\bullet}, S^{\bullet}}) = 0$. 
      \end{proof}

   \begin{lemma}\label{one} Let $\Phi : \hat{R}_{W^{\bullet}, S^{\bullet}} \to M_{N + 1}(Q)$ and $\Psi: M_{N + 1}(Q) \to \hat{R}_{W^{\bullet}, S^{\bullet}}$ be defined as above. Then $\Psi \circ \Phi$ is the identity map on $\hat{R}_{W^{\bullet}, S^{\bullet}}$.
   \end{lemma}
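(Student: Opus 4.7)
Both $\Phi$ and $\Psi$ are ring homomorphisms (the first by the preceding lemma showing $\phi$ vanishes on $\hat I_{W^\bullet,S^\bullet}$; the second by the lemma just before it), so $\Psi\circ\Phi$ is a ring endomorphism of $\hat R_{W^\bullet,S^\bullet}$. The plan is therefore to check that $\Psi\circ\Phi$ agrees with the identity on a generating set, namely on the length-$0$ and length-$1$ path classes $[s_i]^\sim$ ($1\le i\le N+1$) and $y^\sim$ ($y\in Y_{W^\bullet,S^\bullet}$), which generate $\hat R_{W^\bullet,S^\bullet}$ as a $\mathbb{Q}$-algebra.

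First I would handle the vertices: by Lemma \ref{A} we have $P_i^i=[s_i]^\sim$, so
\[\Psi(\Phi([s_i]^\sim))=\Psi(e_{ii})=\psi_i(1_Q)P_i^i=[s_i]^\sim[s_i]^\sim=[s_i]^\sim.\]
Next I split the edge case according to the four clauses in the definition of $\phi$. For $y_{ij}\in Y_{W,S,+}$ with $1\le i\neq j\le N$, unravelling the definitions and using $\dot o(y_{ij})=i$, $\dot t(y_{ij})=j$ together with Lemma \ref{A}, I would compute
\[\Psi(\Phi(y_{ij}^\sim))=\Psi(x_{y_{ij}}e_{ij})=\psi_i(x_{y_{ij}})P_i^j=P_i^{i}y_{ij}^\sim P_{j}^{i}\,P_i^j=[s_i]^\sim y_{ij}^\sim[s_j]^\sim=y_{ij}^\sim.\]
The reversed-edge case $\bar y_{ij}\in\overline{Y_{W,S,+}}$ is entirely parallel, using the definition $\psi_i(\bar x_y)=P_i^{\dot o(\bar y)}\bar y^\sim P_{\dot t(\bar y)}^i$ and $\dot o(\bar y_{ij})=j$, $\dot t(\bar y_{ij})=i$.

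For the edges incident to the auxiliary vertex $s_{N+1}$, we use $\Phi(y^\sim)=e_{\dot o(y),\dot t(y)}$ and the identifications $P_i^{N+1}=[s_is_{N+1}]^\sim=y_{i,\underline{N+1}}^\sim$ and $P_{N+1}^{j}=[s_{N+1}s_j]^\sim=y_{\underline{N+1},j}^\sim$ from the definition of the $P_i^j$. For example,
\[\Psi(\Phi(y_{i,\underline{N+1}}^\sim))=\Psi(e_{i,N+1})=\psi_i(1_Q)P_i^{N+1}=[s_i]^\sim\cdot[s_is_{N+1}]^\sim=y_{i,\underline{N+1}}^\sim,\]
and symmetrically for $y_{\underline{N+1},j}^\sim$. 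This exhausts the generators, so $\Psi\circ\Phi=\mathrm{Id}$.

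The argument is a direct unwinding of definitions; I do not anticipate a genuine obstacle, only the need for care in tracking the four clauses defining $\phi$ and the four cases in the definition of $P_i^j$, and consistently applying the multiplicative identities $P_i^jP_k^l=\delta_{jk}P_i^l$ and $P_i^i=[s_i]^\sim$ from Lemma \ref{A}. The closest thing to a subtlety is checking that $[s_i]^\sim$ acts as a left/right identity on the appropriate edge classes, which is automatic from the idempotency relations in the path algebra.
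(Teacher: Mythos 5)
Your proof is correct and follows essentially the same route as the paper: verifying $\Psi\circ\Phi$ on the generators $[s_i]^{\sim}$ and $y^{\sim}$, splitting the edge case according to the clauses defining $\phi$, and reducing each case via the $P_i^j$-identities of Lemma \ref{A}. The computations match the paper's line by line.
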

   
   \begin{proof}
   It is enough to show that this is true on the generators $[s_i]^{\sim}, s_i \in S$ and 
   $y_{ij}^{\sim},   y_{ij} \in Y_{W^{\bullet}, S^{\bullet}}$. 
   Let $s_i \in S$, then 
   $$\Psi \circ \Phi([s_i]^{\sim}) = \Psi(\phi([s_i]) = \Psi(e_{ii}) = \psi_i(1_Q)P_i^i  = [s_i]^{\sim}.$$
   If $y_{ij} \in Y_{W^{\bullet}, S^{\bullet}, +} , 1\leq i \not= j \leq N$, we have 
   $$
   \begin{aligned}
   \Psi \circ \Phi(y_{ij}^{\sim}) &= \Psi(\phi(y_{ij})) = \Psi(x_{y_{ij}}e_{ij})\\
    &= \psi_i(x_{y_{ij}})P_i^j 
    = P_i^iy_{ij}^{\sim}P_j^iP_i^j\\
     &= y_{ij}^{\sim}.
   \end{aligned}
   $$
   
   If $y_{ij} \in Y_{W^{\bullet}, S^{\bullet}, +} , 1\leq i \not= j \leq N$, we have 
   $$
   \begin{aligned}
   \Psi \circ \Phi(\bar{y}_{ij}^{\sim}) &= \Psi(\phi(\bar{y}_{ij})) = \Psi(\bar{x}_{y_{ij}}e_{ji})\\
    &= \psi_j(\bar{x}_{y_{ij}})P_j^i 
    = P_j^j\bar{y}_{ij}^{\sim}P_i^jP_j^i\\
     &= \bar{y}_{ij}^{\sim}.
   \end{aligned}
   $$

   Finally, if $i = N + 1$ or $j = N + 1$, then 
    $$
   \begin{aligned}
   \Psi \circ \Phi(y_{ij}^{\sim}) &= \Psi(\phi(y_{ij})) = \Psi(e_{ij})\\
    &= \psi_i(1_Q)P_i^j 
    = [s_is_j]^{\sim} = y_{ij}^{\sim}\\
   \end{aligned}
   $$
   \end{proof}

 \begin{lemma}\label{two}Let $\Phi : \hat{R}_{W^{\bullet}, S^{\bullet}} \to M_{N + 1}(Q)$ and $\Psi: M_{N + 1}(Q) \to \hat{R}_{W^{\bullet}, S^{\bullet}}$ be defined as above. Then $\Phi \circ \Psi$ is the identity map on $M_{N + 1}(Q)$.
   \end{lemma}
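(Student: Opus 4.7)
The plan is to verify the identity on a set of additive generators of $M_{N+1}(Q)$ and then leverage the multiplicative structure to reduce the check to generators of $Q$ only. Since both $\Phi$ and $\Psi$ are ring homomorphisms, and since every element of $M_{N+1}(Q)$ is a finite $\mathbb{Q}$-linear combination of elements of the form $qe_{ij}$ with $q\in Q$, it suffices to prove $\Phi(\Psi(qe_{ij}))=qe_{ij}$ for all $q\in Q$ and all $1\leq i,j\leq N+1$.

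First I would factor $qe_{ij}=(qe_{ii})\cdot e_{ij}$, so that
\[
\Phi(\Psi(qe_{ij}))=\Phi(\Psi(qe_{ii}))\cdot \Phi(\Psi(e_{ij}))=\Phi(\psi_i(q))\cdot\Phi(P_i^j),
\]
using that $P_i^i=[s_i]^{\sim}$ (Lemma \ref{A}) and that $\psi_i(q)\in[s_i]^{\sim}\hat R_{W^\bullet,S^\bullet}[s_i]^{\sim}$. So the task splits into two subclaims: (a) $\Phi(P_i^j)=e_{ij}$ for all $i,j$; and (b) $\Phi(\psi_i(q))=qe_{ii}$ for all $q\in Q$ and all $i$.

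For subclaim (a), I would simply enumerate the cases in the definition of $P_i^j$ and apply $\phi$: if $1\leq i,j\leq N$ then $\Phi(P_i^j)=\phi(y_{i\underline{N+1}})\phi(y_{\underline{N+1}j})=e_{i\underline{N+1}}e_{\underline{N+1}j}=e_{ij}$, and the three boundary cases (either index equal to $N+1$) are handled directly from the definition of $\phi$ on edges incident to the new vertex and from $\phi([s_{N+1}])=e_{\underline{N+1}\,\underline{N+1}}$.

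For subclaim (b), I observe that $q\mapsto qe_{ii}$ defines a unital ring homomorphism $\iota_i\colon Q\to e_{ii}M_{N+1}(Q)e_{ii}$, and likewise $\Phi\circ\psi_i$ is a ring homomorphism into the same corner (sending $1_Q\mapsto\Phi([s_i]^{\sim})=e_{ii}$). Since $Q$ is generated as a ring by $1_Q$ together with $\{x_y,\bar x_y\mid y\in Y_{W,S,+}\}$, it is enough to verify agreement on those generators. For $y=y_{kl}\in Y_{W,S,+}$ (so $k,l\leq N$), subclaim (a) yields
\[
\Phi(\psi_i(x_y))=\Phi(P_i^k)\,\Phi(y^{\sim})\,\Phi(P_l^i)=e_{ik}\cdot(x_y e_{kl})\cdot e_{li}=x_y e_{ii},
\]
and an identical computation with $\phi(\bar y_{kl})=\bar x_{y_{kl}}e_{lk}$ gives $\Phi(\psi_i(\bar x_y))=\bar x_y e_{ii}$. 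Combining (a) and (b) yields $\Phi(\Psi(qe_{ij}))=qe_{ii}\cdot e_{ij}=qe_{ij}$, as required.

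No step is a genuine obstacle here: the argument is essentially bookkeeping, given Lemma \ref{A} and Lemma \ref{Pnphi}. The only point that requires a moment of care is noting that $\Phi\circ\psi_i$ and $q\mapsto qe_{ii}$ are honest ring homomorphisms between the same corner ring $e_{ii}M_{N+1}(Q)e_{ii}\cong Q$ (so that agreement on generators of $Q$ suffices), and this is immediate from the construction of $\psi_i$ and $\Phi$ as ring homomorphisms together with the fact that the corner $e_{ii}M_{N+1}(Q)e_{ii}$ is a unital subring with identity $e_{ii}$.
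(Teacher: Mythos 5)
Your proof is correct and follows essentially the same route as the paper's: both reduce the claim to checking $\Phi\circ\Psi$ on multiplicative generators, using that $\Phi$ and $\Psi$ have already been established as ring homomorphisms, and then carry out the same computations $\Phi(P_i^j)=e_{ij}$ and $\Phi(\psi_i(x_y))=x_y e_{ii}$, $\Phi(\psi_i(\bar x_y))=\bar x_y e_{ii}$. Your intermediate factorization through the corner ring $e_{ii}M_{N+1}(Q)e_{ii}\cong Q$ is a slightly cleaner way to organize the reduction than the paper's direct case-by-case check on the generating set $\{x_y e_{ij},\bar x_y e_{ij}, e_{ij}\}$, but it does not change the substance of the argument.
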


\begin{proof} It is enough to show that $\Phi \circ \Psi$ acts as the identity on the generators of $M_{N + 1}(Q)$ as a ring; 
$\{x_{y}e_{ij}, \bar{x}_{y}e_{ij}, e_{ij} | \ 1\leq i, j \leq N + 1, y \in Y_{W, S, +}\}$. 
For $1\leq i, j \leq N + 1$, we have 
$$
\begin{aligned}
\Phi(\Psi(e_{ij})) = \Phi(P_i^j)
&=\begin{cases}
\Phi([s_i]^{\sim}) &\mbox{if} \ i = j\\
\Phi((y_{i \underline{N +1}}y_{\underline{N+1}  j})^{\sim}) &\mbox{if} \ 1\leq i \not=j \leq N\\
\Phi(y_{\underline{N+1}  j}^{\sim}) &\mbox{if} \ i = N + 1 \ \mbox{and} \ 1\leq j \leq N\\
\Phi(y_{i \underline{N+1}}^{\sim}) &\mbox{if} \ j = N + 1 \ \mbox{and} \ 1\leq i \leq N\\
\end{cases}\\
&=\begin{cases}
e_{ii} &\mbox{if} \ i = j\\
e_{i \underline{N +1}}e_{\underline{N+1} j} = e_{ij}  &\mbox{if} \ 1\leq i \not=j \leq N\\
e_{\underline{N+1} j}  &\mbox{if} \ i = N + 1 \ \mbox{and} \ 1\leq j \leq N\\
e_{i \underline{N+1}}  &\mbox{if} \ j = N + 1 \ \mbox{and} \ 1\leq i \leq N\\
\end{cases}
\end{aligned}
$$
Thus $\Phi(\Psi(e_{ij})) = \Phi(P_i^j) = e_{ij}, 1 \leq i, j \leq N + 1$.

From the above calculation we have  that $\Phi(P_i^j) = e_{ij}, 1 \leq i, j \leq N + 1$. Thus if 
$1\leq i, j \leq N + 1, y \in Y_{W, S, +}$, we have 
$$
\begin{aligned}
\Phi\left(\Psi(x_{y}e_{ij})\right) &= \Phi\left(\psi_i(x_{y})P_i^j\right)\\
&=\Phi\left(P_i^{\dot{o}(y)}y^{\sim}P_{\dot{t}(y)}^iP_i^j\right) \\
&= e_{i\dot{o}(y)}\phi(y)e_{\dot{t}(y)i}e_{ij}\\
&= e_{i\dot{o}(y)}x_ye_{\dot{o}(y)\dot{t}(y)}e_{\dot{t}(y)i}e_{ij}\\
&= x_ye_{ij}.
\end{aligned}
$$

For 
$1\leq i, j \leq N + 1, y \in Y_{W, S, +}$, we have 
$$
\begin{aligned}
\Phi\left(\Psi(\bar{x}_{y}e_{ij})\right) &= \Phi\left(\psi_i(\bar{x}_{y})P_i^j\right)\\
&=\Phi\left(P_i^{\dot{o}(\bar{y})}\bar{y}^{\sim}P_{\dot{t}(\bar{y})}^iP_i^j\right) \\
&= e_{i\dot{o}(\bar{y})}\phi(\bar{y})e_{\dot{t}(\bar{y})i}e_{ij}\\
&= e_{i\dot{o}(\bar{y})}\bar{x}_ye_{\dot{o}(\bar{y})\dot{t}(\bar{y})}e_{\dot{t}(\bar{y})i}e_{ij}\\
&= \bar{x}_ye_{ij}.
\end{aligned}
$$
This concludes our proof.
\end{proof}

\section{Zero Divisors in $R_{W, S}$}\label{conclude}
In this section, we put together the results from the previous sections, to get an understanding of zero divisors in the ring $R_{W, S}$, which is the main topic of interest of the paper.  

\subsection{} We let $(W, S)$ be a Coxeter system with $S = \{s_i\}_{i \in I}$, not necessarily finite, and Coxeter matrix $(m_{ij})_{i, j \in I}$.
We use  the notation established in Section \ref{cox} throughout. For $p \in \hat{P}_{W, S}$, we let $x^{\sim} = p + \hat{I}_{W, S}$ denote 
the coset of $p$ in the quotient ring $\hat{R}_{W, S}$.

Although $\hat{R}_{W, S}$ is clearly not a domain, we  have the following result:

\begin{theorem}\label{main}Let  $(W, S)$ be a Coxeter system with $S = \{s_i\}_{i \in I}$, and 
Coxeter matrix $(m_{ij})_{i, j \in I}$.
Let 
$x_1 \in [s_i]{\hat{R}_{W, S}}[s_j]$ and $x_2 \in [s_j]{\hat{R}_{W, S}}[s_k]$,   be elements of ${\hat{R}_{W, S}}$ such that $x_1x_2 = 0$, 
then either $x_1 = 0$ or $x_2 = 0$.
\end{theorem}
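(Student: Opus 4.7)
The plan is to prove the theorem by reducing to the finite rank case and then exploiting the matrix embedding developed in Section \ref{imbed}, together with the fact that the coefficient ring is a domain.

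First I would reduce to finite rank. By Lemma \ref{dirlim}(1), one can choose a finite subset $J \subseteq I$ containing the indices $i, j, k$ together with elements $x_1', x_2' \in \hat{R}_{W_J, S_J}$ satisfying $\tilde{i}_{J,I}(x_\ell') = x_\ell$ for $\ell = 1,2$. Because $\tilde{i}_{J,I}$ is an injective ring homomorphism (Lemma \ref{dirlim}(5)) that fixes the idempotents $[s_i]$, the data $x_1' \in [s_i]\hat{R}_{W_J, S_J}[s_j]$, $x_2' \in [s_j]\hat{R}_{W_J, S_J}[s_k]$, and $x_1' x_2' = 0$ all transfer from $\hat{R}_{W,S}$ to $\hat{R}_{W_J, S_J}$. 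Hence it is enough to prove the statement when $(W,S)$ has finite rank, say $|S| = N$.

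Next I would deploy the matrix embedding. Compose the injection $\tilde{i}_{[N], [N+1]}\colon \hat{R}_{W,S} \hookrightarrow \hat{R}_{W^\bullet, S^\bullet}$ of Lemma \ref{impar} with the isomorphism $\Phi\colon \hat{R}_{W^\bullet, S^\bullet} \xrightarrow{\sim} M_{N+1}(Q)$ established in Lemmas \ref{one} and \ref{two}. Here $Q \cong \{\ast_{\mathbb{Q}} R_y\}_{y \in Y_{W,S,+}}$ is a domain: each $R_y$ is a domain by Corollary \ref{cor}, and the free product over the field $\mathbb{Q}$ of domains is again a domain by Theorem \ref{zerodiv}. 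The composition $\Theta := \Phi \circ \tilde{i}_{[N], [N+1]}$ is therefore an injective ring homomorphism $\hat{R}_{W,S} \to M_{N+1}(Q)$ that sends each idempotent $[s_i]$ to the diagonal matrix unit $e_{ii}$.

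Finally I would read off the conclusion from the corner structure. Since $\Theta([s_i]) = e_{ii}$, any element of $[s_i]\hat{R}_{W,S}[s_j]$ lands in $e_{ii} M_{N+1}(Q) e_{jj} = Q\, e_{ij}$. Thus $\Theta(x_1) = q_1 e_{ij}$ and $\Theta(x_2) = q_2 e_{jk}$ for unique $q_1, q_2 \in Q$, and $\Theta(x_1 x_2) = q_1 q_2\, e_{ik}$. From $x_1 x_2 = 0$ we obtain $q_1 q_2 = 0$ in $Q$; since $Q$ is a domain, $q_1 = 0$ or $q_2 = 0$, and by injectivity of $\Theta$ the corresponding $x_\ell$ vanishes.

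The genuine difficulties have already been dispatched in the preceding sections, namely constructing the isomorphism $\Phi$ (which required the auxiliary graph $\Gamma_{W^\bullet, S^\bullet}$ and the elements $P_i^j$) and recognizing the edge rings $R_y$ as free products so that Cohn's domain theorem applies. What remains is essentially bookkeeping, the only delicate point being to confirm that passing to a larger parabolic preserves the decomposition $\hat{R}_{W,S} = \bigoplus_{r,s} [r]\hat{R}_{W,S}[s]$, which holds because $\tilde{i}_{J,I}$ maps length-zero paths to length-zero paths.
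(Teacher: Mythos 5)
Your proof is correct and follows essentially the same route as the paper: reduce to the finite rank case via the direct limit structure (Lemma \ref{dirlim}), embed $\hat{R}_{W,S}$ into $M_{N+1}(Q)$ through $\hat{R}_{W^\bullet,S^\bullet}$ using Lemma \ref{impar} and the isomorphism $\Phi$, observe that each corner $[s_i]\hat{R}_{W,S}[s_j]$ lands in $Qe_{ij}$, and conclude from $Q$ being a domain. Your writeup is in fact slightly more careful than the paper's, which names the embedding $\Psi$ where it means the composite $\Phi\circ\tilde{i}_{[N],[N+1]}$.
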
 

\begin{proof}
If $S$ is infinite, then $x_1 = p_1^{\sim}$ and $x_2 = p_2^{\sim}$, where $p_1, p_2 \in \hat{P}_{W, S}$ and  are linear combinations of paths involving only a finite number of vertices and edges. Thus, using the direct limit structure of $\hat{R}_{W, S}$ from Lemma \ref{dirlim}, we have 
 $x_1$ and $x_2$ are both in $\tilde{i}_{J, I}(\hat{R}_{W_J, S_J})$
 for some parabolic subgroup $(W_J, S_J)$ where $J \subset I$ is finite. 
 Since $\tilde{i}_{J, I}$ is a monomorphism, we can assume without loss of generality that 
$S$ is finite.

 When  $S$ is finite with cardinality $N$,  by Lemmas \ref{one} and \ref{two}, 
we have a  monomorphism  $\Psi : {\hat{R}_{W, S}} \to M_{N + 1}(Q)$
which maps $[s_i]^{\sim}{\hat{R}_{W, S}}[s_j]^{\sim}$ into $Qe_{ij}$. Furthermore,  $Q$ is a domain 
by  Theorem \ref{zerodiv}. Thus it is enough to show that 
if $(q_1e_{ij})(q_2e_{jk}) = 0$ where $q_1, q_2 \in Q$, then either $q_1 = 0$ or $q_2 = 0$. 
This is obvious since $Q$ is a domain. 
\end{proof}

\begin{corollary}Let let $(W, S)$ be a Coxeter system with $S = \{s_i\}_{i \in I}$, and 
Coxeter matrix $(m_{ij})_{i, j \in I}$.
Let 
$x_1 \in [s_i]R_{W, S}[s_j]$ and $x_2 \in [s_j]R_{W, S}[s_k]$  be elements of ${R_{W, S}}$ such that $x_1x_2 = 0$, 
then either $x_1 = 0$ or $x_2 = 0$.
\end{corollary}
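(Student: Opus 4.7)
The plan is to deduce this corollary directly from Theorem \ref{main} using the embedding provided by Lemma \ref{Rtohat}. The point is that all the hard work (reduction to finite rank via direct limits, embedding into a matrix ring over the free product $Q$, and appeal to Cohn's theorem that $Q$ is a domain) has already been done for $\hat R_{W,S}$, and the corollary is essentially obtained by pulling the statement back along the monomorphism $\tilde{i}\colon R_{W,S}\hookrightarrow \hat R_{W,S}$.

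First I would invoke Lemma \ref{Rtohat}, which supplies a ring monomorphism $\tilde{i}\colon R_{W,S}\to \hat R_{W,S}$ induced by the inclusion $P_{W,S}\hookrightarrow \hat P_{W,S}$ on the common basis $\mathfrak{P}_{W,S}$ of paths. Because $\tilde{i}$ is induced from the identity map on paths, it sends the length-zero idempotent $[s_i]\in R_{W,S}$ to the corresponding idempotent $[s_i]\in\hat R_{W,S}$. Consequently $\tilde{i}$ maps the subgroup $[s_i]R_{W,S}[s_j]$ into $[s_i]\hat R_{W,S}[s_j]$, and similarly for the $[s_j]$-$[s_k]$ piece.

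Next, given $x_1\in [s_i]R_{W,S}[s_j]$ and $x_2\in [s_j]R_{W,S}[s_k]$ with $x_1x_2=0$, I would form $y_1:=\tilde{i}(x_1)\in [s_i]\hat R_{W,S}[s_j]$ and $y_2:=\tilde{i}(x_2)\in [s_j]\hat R_{W,S}[s_k]$. Since $\tilde{i}$ is a ring homomorphism, $y_1y_2=\tilde{i}(x_1x_2)=0$. Theorem \ref{main} then yields $y_1=0$ or $y_2=0$, and the injectivity of $\tilde{i}$ forces the corresponding $x_\ell$ to vanish.

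There is essentially no obstacle here: the only thing one needs to be careful about is verifying that $\tilde{i}$ preserves the idempotents $[s_i]$ and therefore restricts to a map between the appropriate ``corner'' subgroups, which is immediate from the construction of $\tilde i$ in Lemma \ref{Rtohat}. All the substantive content is already encoded in Theorem \ref{main}.
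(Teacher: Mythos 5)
Your proposal is correct and follows exactly the same route as the paper: embed $R_{W,S}$ into $\hat R_{W,S}$ via the monomorphism $\tilde i$ from Lemma \ref{Rtohat} and then invoke Theorem \ref{main}. You simply spell out the (easy, but worth noting) compatibility of $\tilde i$ with the idempotents $[s_i]$, which the paper leaves implicit.
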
 

\begin{proof}By Lemma \ref{Rtohat}, we have  a monomorphism $\tilde{i}: R_{W, S} \to \hat{R}_{W, S}$ on the quotient algebras. Therefore the result follows from Theorem \ref{main}. 
\end{proof}

\end{document}